\documentclass[11pt,reqno,oneside]{amsart}
\usepackage{mathdots}
\usepackage{amsmath}
\usepackage{amssymb}
\usepackage{amsthm}
\usepackage{graphics}
\usepackage{eucal}
\usepackage{bm}
\usepackage[colorlinks]{hyperref}
\usepackage[capitalize,nameinlink,noabbrev]{cleveref}
\usepackage{mathrsfs}
\usepackage[normalem]{ulem}
\usepackage{color}
\usepackage[dvipsnames]{xcolor}
\usepackage[colorinlistoftodos,prependcaption,textsize=tiny]{todonotes}
\usepackage{mathtools}
\usepackage{geometry}

\newtheorem{theorem}{Theorem}[section]
\newtheorem{proposition}{Proposition}[section]
\newtheorem{lemma}{Lemma}[section]
\newtheorem{corollary}{Corollary}[section]

\theoremstyle{remark}
\newtheorem{remark}{Remark}[section]
\theoremstyle{definition}
\newtheorem{definition}{Definition}[section]

\numberwithin{equation}{section}

\newcommand{\vv}[1]{{\mathbf{#1}}}
\newcommand{\ep}{\varepsilon}

\newcommand{\be}{\mathbf{e}}

\newcommand{\diag}{\operatorname{diag}}

\newcommand{\GL}{\operatorname{GL}}

\newcommand{\tDelta}{\Delta^{\!1}}

\newcommand{\bx}{\mathbf{x}}
\newcommand{\bU}{\mathbf{U}}

\newcommand{\tF}{\tilde{F}}

\newcommand{\bp}{\mathbf{p}}

\newcommand{\brho}{\boldsymbol{\rho}}
\newcommand{\br}{\mathbf{r}}
\newcommand{\bPsi}{\boldsymbol{\Psi}}

\newcommand{\fs}{\mathfrak{s}}

\newcommand{\bv}{\mathbf{v}}
\newcommand{\bw}{\mathbf{w}}

\newcommand{\ve}{\varepsilon}
\newcommand{\bve}{\boldsymbol{\varepsilon}}

\newcommand{\cS}{\mathcal{S}}
\newcommand{\ba}{\mathbf{a}}
\newcommand{\bb}{\mathbf{b}}

\newcommand{\Z}{\mathbb{Z}}
\newcommand{\R}{\mathbb{R}}
\newcommand{\N}{\mathbb{N}}
\newcommand{\fff}{\mathit{f}}

\newcommand{\bT}{\mathbf{T}}
\newcommand{\bK}{\mathbf{K}}

\newcommand{\cM}{\mathcal{M}}

\usepackage{mathtools}
\usepackage{tikz-cd}
\usepackage{graphicx}
\newcommand{\Rp}{\mathbb{R}_+}
\newcommand{\cSM}{{\mathcal{S}}^\times}
\newtheorem{thkh}{Theorem}

\newtheorem{thga}{Theorem}

\newtheorem{thkw}{ Theorem KW}

\begin{document}

\title[Weighted and multiplicative Khintchine theorems]{Weighted Diophantine approximation on manifolds}

\author{Victor Beresnevich}\address{\textbf{Victor Beresnevich} \\
Department of Mathematics, University of York, UK }
\email{victor.beresnevich@york.ac.uk}
\author{Shreyasi Datta}
\address{\textbf{Shreyasi Datta} \\
Theoretical Statistics and Mathematics Unit, Indian Statistical Institute, Bangalore, India}
\email{shreyasi@isibang.ac.in, shreyasi1992datta@gmail.com}
\author{Lei Yang}
\address{\textbf{Lei Yang} \\
Department of Mathematics, National University of Singapore, Singapore}
\email{lei.yang@nus.edu.sg}

\date{}

\maketitle

{\centering \textit{In memory of Vasili Bernik}\par}
\begin{abstract}
We establish a weighted simultaneous Khintchine-type theorem, both convergence and divergence, for all nondegenerate manifolds, which answers a problem posed in [Math. Ann., 337(4):769–796, 2007]. This extends the main results of [Acta
Math., 231:1–30, 2023] and  [Ann. of Math.
(2), 175(1):187–235, 2012] in the weighted set-up. As a by-product of our method, we also obtain a multiplicative Khintchine-type convergence theorem for all nondegenerate manifolds, which is a simultaneous analogue of the celebrated result of Bernik, Kleinbock, and Margulis for dual approximation. 
\end{abstract}

\section{Introduction}

One classical theme in the theory of Diophantine approximation is to quantify how well rational points 
$$
\bp/q=(p_1/q,\dots,p_n/q)\,,
$$ 
with $q\in\N$ and $\bp\in\Z^n$, approximate real points $\bx=(x_1,\dots,x_n)\in\R^n$.
In the standard setting, the error of approximations is measured using a fixed norm on $\R^n$, usually the sup-norm 
$$
\|\bx\|=\max_{1\le i\le n}|x_i|\,.
$$
Thus one seeks the same rate of approximation in each coordinate, imposing identical bounds on the quantities $|x_i-p_i/q|$.

In this paper we consider two more general frameworks: \emph{weighted} and \emph{multiplicative}, both of which can be seen as important special cases of the more general theory of Diophantine approximation with star bodies \cite{MR2264601}. In the weighted case, different coordinates may be approximated at different rates; in the multiplicative case, one specifies a bound on the product
$\prod_{i=1}^n |x_i - p_i/q|$.
We now introduce the relevant notation and terminology.

Let $\Rp$ denote the set of positive real numbers, and let $\psi,\psi_1,\dots,\psi_n:\Rp\to(0,1)$ be nonincreasing  functions, referred to as {\em approximation functions}.
Define $\mathcal{S}_n(\psi_1,\dots, \psi_n)$ as the set of $\bx\in\R^n$ such that the system
\begin{equation}\label{eq1.1}
\vert qx_i-p_i\vert<\psi_i(q)\qquad (1\leq i\leq n) 
\end{equation}
holds for infinitely many $(q,p_1,\dots,p_n)\in\N\times\Z^n$. When $\psi_1=\dots=\psi_n=\psi$ this set will be denoted by $\cS_n(\psi)$. A point $\bx\in\cS_n(\psi_1,\dots,\psi_n)$ will be called {\em $(\psi_1,\dots,\psi_n)$--approximable} ({\em $\psi$-approximable} when $\psi_1=\dots=\psi_n=\psi$).

In the multiplicative case,
define $\cSM_n(\psi)$ as  the set of $\bx\in\R^n$ such that
$$  \prod_{i=1}^n |qx_i-p_i| \ < \ \psi(q)
$$  
holds for infinitely many $(q,p_1,\dots,p_n)\in\N\times\Z^n$. The points in $\cSM_n(\psi)$ will be called 
{\em multiplicatively
$\psi$--approximable}.
It is readily verified that
\begin{equation}\label{s:004}
  \cS_n(\psi_1,\dots,\psi_n)\subset\cSM_n(\psi)\quad\text{if}\quad\psi\ge\psi_1\cdots\psi_n\,.
\end{equation}

\noindent By Minkowski's linear forms theorem,
\begin{equation}\label{sv:004}
\cS_n(\psi_1,\dots,\psi_n)=\R^n \hspace{9mm}  {\rm  if } \hspace{9mm}
q\psi_1(q)\cdots\psi_n(q)\ge 1 \quad\text{infinitely often}\,.
\end{equation}
This together with \eqref{s:004} implies that
\begin{equation}\label{sv:005}
\cSM_n(\psi)=\R^n  \hspace{9mm}  {\rm  if } \hspace{9mm} q\psi(q)\ge 1 \quad\text{infinitely often} .
\end{equation}
Littlewood's conjecture, a famous open problem dating back to the 1930s, asserts that the condition in \eqref{sv:005} can be weakened to 
$$\psi(q)\ge \varepsilon q^{-1}\qquad\text{for any $\varepsilon>0$},
$$
see \cite{EKL} and \cite{PVActa}. However, in the weighted case, 
such a weakening is generally impossible for the existence
of badly approximable points \cite{MR2581371, MR2231044}, e.g. when 
$$
\psi_i(q)=q^{-\tau_i}\,,\qquad \tau_i\ge0,\quad\tau_1+\dots+\tau_n=1\,.
$$
Here $\bx\in\R^n$ is {\em $(\psi_1,\dots,\psi_n)$-badly approximable} if $\bx\not\in\cS_n(c\psi_1,\dots,c\psi_n)$ for some $c>0$. Nevertheless, the set of $(\psi_1,\dots,\psi_n)$-badly approximable points is well known to have zero measure \cite{BadNull}.

In this paper we will be concerned with the measure of the sets $\cS_n(\psi_1,\dots,\psi_n)$ and $\cSM_n(\psi)$ 
restricted to submanifolds of $\R^n$ when $\psi_1,\dots,\psi_n,\psi$ are generic non-increasing functions. 
We begin by recalling the corresponding results for $\R^n$. Let $\mu_n$ denote Lebesgue measure on $\R^n$.

\smallskip

\begin{thkh}
\label{gs} Let   $\psi_1,\dots,\psi_n$ be non-increasing functions. Then
$$\mu_{n}\left(\cS_n(\psi_1,\dots,\psi_n)\right) =\left\{\begin{array}{ll} \mbox{\rm
Z{\scriptsize ERO}} & {\rm if} \;\;\; \sum \;  \psi_1(q)\cdots \psi_n(q)
\;\; <\infty\,,\\ &
\\[-1ex] \mbox{\rm F{\scriptsize ULL}} & {\rm if} \;\;\; \sum  \;  \psi_1(q)\cdots \psi_n(q) \;\;
 =\infty\,. \; \;
\end{array}\right.$$
\end{thkh}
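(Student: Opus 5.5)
This is Khintchine's theorem in its weighted form. It is enough to work on the unit cube $[0,1]^n$, since $\cS_n(\psi_1,\dots,\psi_n)$ is invariant under translation by $\Z^n$.

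\emph{Convergence.} For each $q$ let
$$
A_q=\{\bx\in[0,1]^n:\ |qx_i-p_i|<\psi_i(q)\ \text{for some }\bp\in\Z^n\}.
$$
The set $A_q$ is a union of at most $(q+1)^n$ boxes centred at the points $\bp/q$, each with side lengths $2\psi_i(q)/q$. Hence
$$
\mu_n(A_q)\ll q^n\prod_i\frac{\psi_i(q)}{q}=\prod_i\psi_i(q).
$$
The hypothesis $\sum_q\prod_i\psi_i(q)<\infty$ then lets the Borel--Cantelli lemma give $\mu_n(\limsup A_q)=0$. This direction does not use monotonicity.

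\emph{Divergence.} I would use the ubiquity / quasi-independence approach.

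First, monotonicity gives a Cauchy condensation: $\sum_k 2^k\prod_i\psi_i(2^k)=\infty$. We may also assume each $\psi_i(q)\le 1/2$.

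Second, restrict to $q\in(2^{k-1},2^k]$ and to reduced fractions, i.e. $\gcd(q,\bp)=1$. Let $E_k$ be the set of $\bx$ for which
$$
|x_i-p_i/q|<\psi_i(2^k)/2^k \qquad (1\le i\le n)
$$
holds for some such $(q,\bp)$.

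Third, establish a local ubiquity statement: for every ball $B$ and all large $k$,
$$
\mu_n(E_k\cap B)\gg \min\Bigl\{1,\ 2^k\prod_i\psi_i(2^k)\Bigr\}\,\mu_n(B).
$$
I would prove this by counting and disjointness. There are $\asymp 2^{k(n+1)}\mu_n(B)$ reduced points $\bp/q$ in $B$ with $q$ in the dyadic range. When $2^k\prod_i\psi_i(2^k)\le 1$, the boxes around distinct points overlap only boundedly in measure; this follows from a pair-counting argument, since the number of pairs of fractions that are close in the weighted sense is controlled by the box volumes. Each box has volume $\asymp 2^{-kn}\prod_i\psi_i(2^k)$. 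Multiplying gives the stated bound.

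Fourth, prove quasi-independence on average:
$$
\sum_{k,l\le N}\mu_n(E_k\cap E_l\cap B)\ll\Bigl(\sum_{k\le N}\mu_n(E_k\cap B)\Bigr)^2\Big/\mu_n(B)
$$
for large $N$. For $k<l$, each box of $E_k$ has volume $\asymp 2^{-kn}\prod_i\psi_i(2^k)$, and the rational points of level $l$ are uniformly distributed at that scale. So the measure of $E_l$ inside a box of $E_k$ is $\ll \mu_n(E_l\cap B)/\mu_n(B)$ times the box volume, up to an error. That error comes from level-$l$ rationals that coincide with, or lie on rational subspaces near, level-$k$ points. Here the coprimality restriction is used.

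Finally, the divergence Borel--Cantelli lemma (Chung--Erd\H{o}s / Kochen--Stone) gives $\mu_n(\limsup E_k\cap B)\gg\mu_n(B)$ for every ball $B$. Lebesgue density then gives full measure, because $\limsup E_k\subset\cS_n(\psi_1,\dots,\psi_n)$ by monotonicity, using $\psi_i(2^k)\le\psi_i(q)$ and $2^k\ge q$.

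\emph{Main obstacle.} I expect the overlap estimates in the third and fourth steps to be the hard part, especially in the weighted setting where the boxes are very elongated. If the lattice-point counting becomes unwieldy, I would fall back on a cleaner route. Cassels' zero--one law shows the measure is $0$ or full. Gallagher's argument, which reduces to checking the second moment $\sum_{q,q'}\mu_n(A_q\cap A_{q'})$ by the pairwise overlap formula, then suffices via the Duffin--Schaeffer-type estimate. For $n\ge2$ that estimate holds without monotonicity issues (Gallagher), and monotonicity handles the reduction to coprime numerators.
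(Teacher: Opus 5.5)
The paper does not prove Theorem~K; it quotes it from Khintchine and Gallagher, so your argument has to stand on its own. Your convergence half is correct. The divergence half has a genuine gap in Steps~3--4.

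The cause is your coprimality condition $\gcd(q,\bp)=1$. It still allows individual coordinates $p_i/q$ to be non-reduced. With unequal weights, the resulting coincidences $p_i/q=p_i'/q'$ make close pairs far more numerous than the box volumes predict.

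Take $n=2$, $\psi_1(q)=1/(q\log q)$, $\psi_2\equiv 1/2$ (so the series diverges) and $N=2^k$. Your boxes have sides $\asymp N^{-2}(\log N)^{-1}$ and $\asymp N^{-1}$, with total measure $\asymp(\log N)^{-1}$.
\begin{itemize}
\item For each reduced $a/b$ with $b\le N/4$, there are $\asymp N/b$ denominators $q\in(N/2,N]$ divisible by $b$. They give boxes with $p_1=aq/b$ and any $p_2$ coprime to $q/b$, all lying in one strip around $x_1=a/b$.
\item Pairs among them with $|p_2/q-p_2'/q'|<1/N$ contribute $\asymp (N/b)^2\cdot N\cdot N^{-3}(\log N)^{-1}$ to $\sum_{\alpha\ne\beta}\mu_2(\mathrm{box}_\alpha\cap\mathrm{box}_\beta)$.
\item Summing over $a/b$ gives $\asymp(\log N)^{-1}\sum_{b\le N/4}\varphi(b)b^{-2}\asymp 1$.
\end{itemize}
So the pair-counting (Cauchy--Schwarz) bound for $\mu_2(E_k)$ is at best $(\log N)^{-2}$, not the claimed $(\log N)^{-1}$. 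The claimed bound happens to be true here, but not for your reason. These same coincidences are what the unquantified ``error'' in Step~4 would have to control, so that step is not established either.

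Your fallback does not repair this, because it never says where a usable overlap bound comes from. Also, ``no monotonicity needed for $n\ge 2$'' is Gallagher's theorem for $\psi_1=\dots=\psi_n$ only. With weights it is false: $\psi_2\equiv 1/2$ reduces the problem to dimension one, where the Duffin--Schaeffer example shows monotonicity cannot be dropped.

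The missing idea is coordinatewise coprimality, $\gcd(p_i,q)=1$ for every $i$. Then the level-$q$ set $E_q\subset[0,1)^n$ is a product $\prod_i E_q^{(i)}$ of one-dimensional sets, so $\mu_n(E_q\cap E_{q'})=\prod_i\mu_1(E_q^{(i)}\cap E_{q'}^{(i)})$. For $q\ne q'$ an elementary count gives $\mu_1(E_q^{(i)}\cap E_{q'}^{(i)})\le 8\psi_i(q)\psi_i(q')$:
\begin{itemize}
\item an overlap forces $v=aq'-a'q$ to be a nonzero multiple of $g=\gcd(q,q')$ with $|v|<2\max\{q'\psi_i(q),q\psi_i(q')\}$;
\item each such $v$ comes from at most $g$ pairs $(a,a')$;
\item each overlap has length at most $2\min\{\psi_i(q)/q,\psi_i(q')/q'\}$.
\end{itemize}
Combine this with $\mu_n(E_q)=2^n(\varphi(q)/q)^n\prod_i\psi_i(q)$, the bound $\sum_{q\le N}(\varphi(q)/q)^n\gg N$, and Abel summation against the non-increasing $\prod_i\psi_i$. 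This gives quasi-independence on average. Chung--Erd\H{o}s then gives positive measure, and a zero--one law of Cassels--Gallagher type gives full measure. This is the classical route to the result the paper cites.

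Alternatively, you can keep your ubiquity architecture and let Minkowski's theorem supply the ubiquity. When $2^k\prod_i\psi_i(2^k)\le 1$, choose $\rho_i'\ge\psi_i(2^k)$ with $\prod_i\rho_i'=2^{-k}$. Then every $\bx$ has some $q\le 2^k$ with $|qx_i-p_i|\le\rho_i'$, and a volume count shows $q\ge c2^k$ for most $\bx$ in any ball. This is a ubiquitous system for rectangles, and the rectangular ubiquity Theorem~KW then replaces your Steps~3--4 altogether. The case $2^k\prod_i\psi_i(2^k)>1$ is handled by Minkowski directly. This is exactly the structure of the paper's own divergence argument on manifolds in \S\ref{sec: div}.
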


\smallskip 

\begin{thga}
\label{gm} Let   $\psi$ be a non-increasing function. Then
$$\mu_{n}\left(\cSM_n(\psi)\right) =\left\{\begin{array}{ll} \mbox{\rm
Z{\scriptsize ERO}} & {\rm if} \;\;\; \sum \;  \psi(q) \ (\log
q)^{n-1} \;\; < \ \infty\,,\\ &
\\[-1ex] \mbox{\rm F{\scriptsize ULL}} & {\rm if} \;\;\; \sum  \;  \psi(q) \ (\log
q)^{n-1}  \;\;
 = \ \infty\,. \; \;
\end{array}\right.$$
\end{thga}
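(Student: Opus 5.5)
The plan is to reduce the statement to the weighted Khintchine theorem (Theorem~\ref{gs}) together with a counting estimate for the multiplicative target regions. The measure-zero part comes from Borel--Cantelli. The full-measure part comes from covering the hyperbolic region by weighted boxes.

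\emph{Convergence.} By $\Z^n$-periodicity it suffices to work in $[0,1)^n$. For fixed $q$, let $E_q$ be the set of $\bx\in[0,1)^n$ with $\prod_i\|qx_i\|<\psi(q)$, where $\|\cdot\|$ is distance to the nearest integer. Since $\psi(q)<1$, a standard computation gives
$$\mu_n(E_q)=\mu_n\Bigl(\Bigl\{\mathbf{y}\in[-\tfrac12,\tfrac12]^n:\prod_i|y_i|<\psi(q)\Bigr\}\Bigr)\ll_n \psi(q)\,(\log(1/\psi(q)))^{n-1}+\psi(q).$$
This uses the volume of the region $\{y\in[0,1]^n:\prod y_i<t\}$, which equals $t\sum_{k<n}(\log(1/t))^k/k!$. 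The aim is to replace $\log(1/\psi(q))$ by $\log q$, so split into two cases. If $\psi(q)\ge q^{-2}$, then $\log(1/\psi(q))\le 2\log q$. Otherwise, replace $\psi$ by $\tilde\psi=\max(\psi,q^{-2})$; this only enlarges $\cSM_n(\psi)$, and $\sum q^{-2}(\log q)^{n-1}<\infty$ keeps the hypothesis intact. Then $\sum_q\mu_n(E_q)<\infty$, and Borel--Cantelli gives measure zero. Monotonicity of $\psi$ is not needed here.

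\emph{Divergence.} Dyadically decompose the hyperbolic region. For $q\in[2^t,2^{t+1})$, the region $\prod|y_i|<\psi(q)$ contains the union of the boxes $|y_i|<2^{-k_i}$ over integer vectors $\mathbf{k}\ge0$ with $\sum k_i\approx \log_2(1/\psi(q))$. Restrict to $k_i\le t/n$ (say), so there are $\gg t^{n-1}$ such vectors whenever $\psi(q)\ge 2^{-t}$; the case $\psi(q)<q^{-1}$ is handled by a reduction like the one above.

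One natural route is pigeonholing: choose, for infinitely many dyadic blocks, weights $\tau\in\Delta$ carrying a large share of the divergent sum. However, a single weight vector captures only $\asymp 1$ box rather than $t^{n-1}$, so this route loses the logarithm. The correct approach is a quasi-independence (Erd\H{o}s--R\'enyi / Chung--Erd\H{o}s) argument applied directly to the sets $E_q$. First, obtain the lower bound $\mu_n(E_q)\gg\psi(q)(\log q)^{n-1}$ for $q\psi(q)\ge q^{-\epsilon}$. Second, bound the pairwise overlaps:
$$\sum_{q,q'\le N}\mu_n(E_q\cap E_{q'})\ll\Bigl(\sum_{q\le N}\mu_n(E_q)\Bigr)^2.$$
This is done by Fourier-expanding the indicator of the one-dimensional product over the coordinates. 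Because $E_q$ is a product-type set in each box decomposition, $\mu(E_q\cap E_{q'})$ reduces to one-dimensional overlap estimates of the form $\mu(\|qx\|<a,\|q'x\|<b)\le ab+O(\gcd(q,q')\min(a,b)/\max(q,q'))$. Chung--Erd\H{o}s then gives positive measure for $\cSM_n(\psi)$. Since the set is invariant under the ergodic action $\bx\mapsto 2\bx$ (up to replacing $\psi$ by a constant multiple, after Cassels-type zero-one reasoning), positive measure upgrades to full measure.

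The main obstacle is the quasi-independence step: summing the gcd error terms over all pairs $q,q'$ and over the $t^{n-1}$ box shapes. Following Gallagher, I would first restrict to $q$ with few small prime factors, or to $\gcd$-controlled pairs, and use the monotonicity of $\psi$ to show the restricted sum still diverges.
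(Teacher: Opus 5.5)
The paper does not prove Theorem~G; it quotes it from Gallagher~\cite{gal}, so the benchmark here is the classical argument. Your convergence half is correct and standard: the exact hyperbolic volume, the replacement $\psi\mapsto\max(\psi,q^{-2})$, and Borel--Cantelli. In spirit it matches the decomposition into $\asymp t^{n-1}$ box shapes used in \S\ref{sec: multi}.

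\textbf{The gap.} The divergence half has a genuine gap at its central step. The bound $\sum_{q,q'\le N}\mu_n(E_q\cap E_{q'})\ll(\sum_{q\le N}\mu_n(E_q))^2$ is asserted, not proved. Moreover, the one-dimensional lemma you would use to prove it is false. The correct estimate is
$\mu_1(\|qx\|<a,\ \|q'x\|<b)\ll ab+\gcd(q,q')\min(a/q,\,b/q')$.
Your error term $\gcd(q,q')\min(a,b)/\max(q,q')$ can be smaller by an arbitrary factor. For $q'=Mq$, $a=M^{-2}$, $b=M^{-1}$, the intersection contains the $M^{-2}/q$-neighbourhood of $\tfrac{1}{q}\Z$, which has measure $2M^{-2}$, while your right-hand side is $O(M^{-3})$.

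With the correct term, the gcd contributions are exactly where the difficulty lies. Already for $n=1$ and $q<r$ one has $\mu_1(\|qx\|<\psi(q),\ \|rx\|<\psi(r))\ge 2\gcd(q,r)\psi(r)/r$. Summing over $q<r\le N$ gives $\gg\sum_{r\le N}\psi(r)\log r$. For $\psi(r)=1/(r\log r)$ this is $\gg\log N$, far above $(\sum_{q\le N}\psi(q))^2\asymp(\log\log N)^2$. So unrestricted quasi-independence genuinely fails in dimension one.

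For $n\ge2$ the extra coordinates damp these terms, and the unrestricted bound may survive. But controlling all mixed terms over the $\asymp(\log q)^{n-1}(\log q')^{n-1}$ pairs of boxes, including factors like $\log(\psi(q)/\psi(q'))$, is real work you have not done. Your suggested remedy also misses the target: restricting to $q$ free of small primes still lets $\gcd(q,q')$ be large through large primes.

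\textbf{The missing idea} is to pass to reduced fractions, as in Duffin--Schaeffer and Gallagher.
\begin{itemize}
\item Let $E'_q\subset E_q$ be the part coming from $\bp$ with $\gcd(p_i,q)=1$ for all $i$. Then $\mu_n(E'_q)=(\phi(q)/q)^n\mu_n(E_q)$.
\item Write $A'_q(\alpha)=\{x:|qx-a|<\alpha$ for some $a\in\Z$ with $\gcd(a,q)=1\}$. For $q\ne r$, reduced fractions $a/q$ and $b/r$ never coincide modulo $1$. The map $(a,b)\mapsto ar-bq$ is $\gcd(q,r)$-to-one onto the multiples of $\gcd(q,r)$ modulo $qr$. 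Counting then gives $\mu_1\big(A'_q(\alpha)\cap A'_r(\beta)\big)\le 8\alpha\beta$, with no gcd term at all.
\item Cover the hyperbolic region by the $\asymp K^{n-1}$ boxes $\prod_i\{|y_i|\le 2^{-k_i}\}$ with $\sum_i k_i=K$ and $2^{-K}\asymp\psi(q)$. Multiplying the one-dimensional bound coordinatewise gives $\mu_n(E'_q\cap E'_r)\ll_n\mu_n(E_q)\mu_n(E_r)$ for every $q\ne r$.
\item Monotonicity enters only through the fact that $\mu_n(E_q)$ is nonincreasing. Combined with $\sum_{q\le x}(\phi(q)/q)^n\gg x$ and partial summation, this yields $\sum_{q\le N}\mu_n(E'_q)\gg\sum_{q\le N}\mu_n(E_q)\to\infty$.
\item This needs the reduction $\psi(q)<1/q$, which is harmless by \eqref{sv:005}. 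Your condition $q\psi(q)\ge q^{-\epsilon}$ points the wrong way: the lower bound $\mu_n(E_q)\gg\psi(q)(\log q)^{n-1}$ needs an upper bound on $\psi$.
\item Kochen--Stone then gives $\mu_n(\cSM_n(c\psi))\ge\delta_n>0$ uniformly in $c\in(0,1]$.
\item Your Cassels step then works if applied to $\bigcap_k\cSM_n(\psi/k)$. This set is forward-invariant under $\bx\mapsto 2\bx$, because that map sends $\cSM_n(c\psi)$ into $\cSM_n(2^n c\psi)$.
\end{itemize}
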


\noindent Here `\mbox{\rm F{\scriptsize ULL}}' means that the complement has  measure zero. Theorem~K was originally established by Khintchine  
\cite{Kh} when $\psi_1=\dots=\psi_n$, and both results as stated above were obtained by Gallagher in \cite{gal}.

Our main results extend Theorem~K and the convergence part of Theorem~G to nondegenerate submanifolds of $\R^n$ as defined in \cite{KM}. 

\smallskip

\begin{theorem}[\textsc{Weighted Khintchine for manifolds}]\label{thm: dream0}
Let $\cM$ be any nondegenerate submanifold of $\R^n$ and $\psi_1,\dots,\psi_n:\Rp\to(0,1)$ be non-increasing. Then almost all points on $\cM$ are $(\psi_1,\dots,\psi_n)$-approximable if the series
\begin{equation}\label{mainsum}
\sum_{q=1}^\infty\psi_1(q)\cdots \psi_n(q)
\end{equation}
diverges, and are not $(\psi_1,\dots,\psi_n)$-approximable if \eqref{mainsum} converges.
\end{theorem}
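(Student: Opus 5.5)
We will handle the two halves separately, but both rest on the same counting of rational points near $\cM$. The first step is a reduction. By standard arguments (local parametrisation, nondegeneracy being an open local condition, and countable covers) it suffices to work with a Monge-type patch
$$
\cM=\{(\bx,\fff(\bx)) : \bx\in U\},\qquad U\subset\R^d \text{ a small ball},\quad \fff:U\to\R^{n-d}\ \text{analytic-like nondegenerate}.
$$
We also reduce to the case where each $\psi_i(q)\ge q^{-\tau_i}$-type lower bounds hold up to harmless modifications. Concretely, replace $\psi_i$ by $\max\{\psi_i(q),q^{-1/n}(\log q)^{-2}\}$-style truncations so that the sum condition is unchanged, using the convergence case on the larger function. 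Finally we pass to dyadic blocks $q\sim 2^t$, so that the sum \eqref{mainsum} is comparable to $\sum_t 2^t\psi_1(2^t)\cdots\psi_n(2^t)$.

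\textbf{Convergence.} The key estimate is a weighted count of rational points near $\cM$. For $Q=2^t$ and boxes $\Delta(\psi_1(Q)/Q,\dots,\psi_n(Q)/Q)$, we want
$$
\#\{\bp/q : q\sim Q,\ \bp/q\ \text{within the weighted neighbourhood of } \cM|_U\}\ \ll\ Q^{\,n+1}\psi_1(Q)\cdots\psi_n(Q)+E(Q),
$$
where $E(Q)$ is a summable error.

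We follow the dynamical approach of Beresnevich--Yang (Acta Math.\ 2023). The neighbourhood count is encoded as the measure of $\bx\in U$ for which the unipotent-translated lattice $g_{\bt}u(\bx)\Z^{n+1}$ has a short vector. Here $g_{\bt}=\diag(e^{t_0},e^{-t_1},\dots,e^{-t_n})$ is now a \emph{weighted} diagonal flow with $e^{-t_i}\approx\psi_i(Q)/Q$, in place of the symmetric one.

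The contribution of "generic" points is handled by equidistribution of expanding translates of the nondegenerate curve/manifold pieces, in a quantitative form, under non-symmetric diagonal flows. The contribution of "special" points is handled by the quantitative nondivergence of Kleinbock--Margulis, which gives a power saving. The error $E(Q)$ must be $\ll Q^{n+1}\cdot Q^{-1-\delta}$ for some $\delta>0$. With that in hand, Borel--Cantelli over dyadic blocks gives measure zero.

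\textbf{Divergence.} We use ubiquity, in the rectangular form of Beresnevich--Velani / Wang--Wu, or Kleinbock--Wang style. This needs a matching lower bound: for a positive proportion of every ball $B\subset U$, each $\bx$ lies within the rescaled box $\kappa\,\Delta(\psi_i(Q)/Q)$, with $\psi_i(Q)\approx Q^{-\tau_i}$ and $\sum\tau_i=1$ (the Dirichlet-scale weighted boxes), of some $\bp/q$ with $q\sim Q$. The coordinates not in the parametrising set $\bx$ are automatically controlled.

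This lower bound follows from Minkowski's theorem combined with the upper bound above. Dirichlet provides a solution with $q\le Q$, and the convergence-type estimate shows that solutions with $q\le \epsilon Q$, or lying on "special" rational subspaces, occupy only a small proportion. Then the weighted ubiquity theorem transfers the divergence of \eqref{mainsum} into full measure on $\cM$. Monotonicity of each $\psi_i$ is used to apply the rectangle-ubiquity framework, together with a reduction to $\psi_i(q)\le q^{-\tau_i}$ via a "twisting" of the weights.

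\textbf{Main obstacle.} The hardest step is the quantitative equidistribution or counting for \emph{weighted} flows, which must be uniform as the weight vector $(\tau_1,\dots,\tau_n)$ varies with $Q$, since the $\psi_i$ are arbitrary. My first attempt would be to decompose $g_{\bt}$ into a product of a symmetric-type expansion along the $\cM$-directions and a bounded-distortion diagonal part. I would then apply the Beresnevich--Yang counting with constants depending only on the unipotent structure, so that uniformity in $\bt$ comes from the Kleinbock--Margulis $(C,\alpha)$-good estimates.
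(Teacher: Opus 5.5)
The overall architecture (dyadic blocks, BKM for an exceptional set, rectangle ubiquity of Kleinbock--Wang type) matches the paper. However, your convergence argument hinges on a step you do not supply, and which is not available in this generality. That step is an effective equidistribution theorem for expanding translates of arbitrary finitely smooth nondegenerate manifolds under non-symmetric diagonal flows, uniform in weights that vary with $Q$. Your suggested route cannot produce it: $\psi_i(Q)/\psi_j(Q)$ is in general an unbounded power of $Q$, so the ``bounded-distortion'' factor is not bounded. Nor is this how Beresnevich--Yang argue; the paper uses no equidistribution at all. The missing idea is linearisation plus a pure volume count. Cover the domain by boxes $\Delta(\bx_0)$ of sides $(\ep_i'/e^t)^{1/2}$ with $\ep_i'=\ep_{d+1}$. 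On such a box the tangent-plane approximation at $\bx_0$ is accurate enough (Lemma~\ref{lem: x to x_0}, which uses $\ep_i\le\ep_{d+j}$), so every rational point to be counted yields a point of the single lattice $a\,g_{conv}u_1(\bx_0)\Z^{n+1}$ in the cube $[-c\phi,c\phi]^{n+1}$. When $\lambda_{n+1}$ of that lattice is at most $\phi$, Minkowski's second theorem bounds the number of such points by $\ll\phi^{n+1}/\det a$. Summing over the cover then gives $\ll e^{t(d+1)}\ep_{d+1}\cdots\ep_n\,\mu_d(B)$ (Proposition~\ref{prop: count on major}). The exceptional set $\mathfrak M$ is contained, by duality, in $\mathfrak S_F(\cdot)$ and is bounded by BKM. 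That BKM bound is summable only under a lower bound on the \emph{product}, $q\prod_i\psi_i(q)>q^{-\mathfrak c}$, and your reduction gets this wrong. Raising each $\psi_i$ separately to $\max\{\psi_i(q),q^{-1/n}(\log q)^{-2}\}$ can destroy convergence: for $n=2$, $\psi_1\equiv\frac12$, $\psi_2(q)=\frac12q^{-2}$, one gets $\psi_1\psi_2'\asymp q^{-1/2}(\log q)^{-2}$. What is needed is a simultaneous modification that fixes the product while preserving monotonicity and the ordering $\psi_1\le\dots\le\psi_n$; this is Proposition~\ref{prop2.1}.

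In the divergence part, the claim that the non-parametrising coordinates are ``automatically controlled'' is false. The ubiquity inclusion needs resonant points with $|qf_j(\ba/q)-b_j|<\frac12\psi_{d+j}(q)$, i.e.\ target accuracy in the normal directions. So the rectangles must satisfy $\prod_{i\le d}Q\rho_i\asymp(Q\prod_j\psi_{d+j}(Q))^{-1}$, and when $Q\prod_i\psi_i(Q)<1$ this can push a tangential side far above the normal ones. For a planar curve with $\psi_1=\psi_2=(Q\log Q)^{-1/2}$ one gets $Q\rho_1=(\log Q/Q)^{1/2}=\psi_2(Q)\log Q$. A Dirichlet solution for $(x,f(x))$ then only gives $|qf(a/q)-b|\lesssim\psi_2(Q)\log Q$; with a coarser normal side, the inclusion into $\mathcal S_n(\psi_1,\dots,\psi_n)$ fails instead. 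What is needed is the linearised lattice $g^{-1}u(\bx)\Z^{n+1}$, with $u(\bx)$ from \eqref{def_u} containing $\partial_1 f$ and $g$ carrying the inflated side $\ep_1'=(Q\ep_2\cdots\ep_n)^{-1}$ in that single direction. Where $\lambda_{n+1}\le c^{-n}$, one finds a lattice point near a prescribed vector, giving $q\asymp Q$ and, by Taylor's formula, normal error $\ll\ep_{d+j}+\ep_1'^2/Q$, which \eqref{Goal2} controls (Lemma~\ref{lem: projection}). The complement is small by duality and the BKM variant with separate bounds $K_i$ for each partial derivative (Theorem~\ref{BKM_new2}), not by your unproved upper count. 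Finally, the reduction to a Monge patch ``by standard arguments'' overlooks that the weighted problem is not coordinate-symmetric. The argument needs each dependent coordinate to be a function only of parameters carrying smaller $\psi_i$; this is where $\psi_i\le\psi_{d+j}$ enters. A Monge patch with the $d$ smallest-$\psi$ coordinates as parameters need not exist (e.g.\ a cylinder), which is why the paper passes to the staircase parametrisation \eqref{eq: Para_new}.
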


\medskip

\begin{theorem}[\textsc{Multiplicative convergence for manifolds}]\label{thm:mult0}Let $\cM$ be any nondegenerate submanifold of $\R^n$ and $\psi:\Rp\to(0,1)$ be non-increasing. Then almost all points on $\cM$ are not multiplicatively
$\psi$-approximable if
$$
\sum_{q=1}^\infty \psi(q) (\log q)^{n-1}<\infty.
$$
\end{theorem}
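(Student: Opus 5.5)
Our plan is to deduce Theorem~\ref{thm:mult0} from the convergence half of Theorem~\ref{thm: dream0}, applied to finitely many weight vectors at a time, by a dyadic decomposition of the multiplicative region into weighted boxes. We first make routine reductions. By compactness and a partition of unity we may work with a small nondegenerate Monge patch $\cM=\{(\bx,\mathbf f(\bx))\}$. We may also assume $\psi(q)\ge q^{-1}(\log q)^{-c}$ for a suitable $c$ (replacing $\psi$ by $\max(\psi,q^{-1-\ep})$ keeps the series convergent). Points $\bx$ with some $|qx_i-p_i|<q^{-A}$ infinitely often can be discarded, since convergence in Theorem~\ref{thm: dream0} with $\psi_i=q^{-A}$ and the other $\psi_j=1/2$ applies once $A>1$.

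Next we cover the multiplicative condition by weighted boxes. Fix a dyadic block $2^t\le q<2^{t+1}$. If $\prod_i|qx_i-p_i|<\psi(q)$ and each $|qx_i-p_i|\ge 2^{-At}$, then there is an integer vector $\mathbf k=(k_1,\dots,k_n)$ with $0\le k_i\le At$, $\sum k_i\approx -\log_2(2^{t}\psi(2^t))$ up to $O(n)$, such that
\[
|qx_i-p_i|<2^{-k_i}\cdot \text{(const)}\qquad(1\le i\le n),
\]
and the number of such $\mathbf k$ is $\ll t^{n-1}$. The box attached to $\mathbf k$ has product of sides $\asymp \psi(2^t)$. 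Summing over blocks, the total ``mass'' is $\sum_t t^{n-1}2^t\psi(2^t)\asymp\sum_q\psi(q)(\log q)^{n-1}<\infty$, by Cauchy condensation.

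The difficulty is that Theorem~\ref{thm: dream0} is stated for fixed functions $\psi_i$, whereas here the box shape varies with $t$ and $\mathbf k$. So we cannot simply apply it. Instead we must use its proof in a quantitative, uniform form: for a single dyadic block and a weight box with sides $\delta_i=2^{-k_i}$, the measure of the set of $\bx$ in the patch for which some $q\sim 2^t$ satisfies $|qf_i(\bx)-p_i|<\delta_i$ is
\[
\ll 2^t\delta_1\cdots\delta_n+ E(t,\mathbf k),
\]
where the error term $E$ (from the ``minor arcs'' counting rational points near $\cM$) decays like $2^{-\eta t}$, uniformly in $\mathbf k$ with $k_i\le At$. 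We expect this uniformity to be the main obstacle. We would obtain it from the method behind Theorem~\ref{thm: dream0}: a major/minor arc split via the dynamics of $g_{\mathbf k,t}=\diag(2^{t},2^{k_1},\dots,2^{k_n})$-type flows acting on unimodular lattices, with quantitative nondivergence (Kleinbock--Margulis) handling the major arcs. The key point is that the nondivergence exponents there depend only on the nondegeneracy and not on the weights, provided the weights are bounded, i.e.\ $k_i\le At$.

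With this estimate in hand, summing over $\mathbf k$ and $t$ gives total measure
\[
\ll\sum_t\big(t^{n-1}2^t\psi(2^t)+t^{n}2^{-\eta t}\big)<\infty .
\]
Borel--Cantelli then yields that almost every point of $\cM$ lies in only finitely many of the sets. Hence almost no point of $\cM$ is multiplicatively $\psi$-approximable, which proves Theorem~\ref{thm:mult0}.
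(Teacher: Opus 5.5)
Your plan matches the paper's proof in \S\ref{sec: multi}. Both arguments discard points with one very small coordinate; the paper fixes $w_0>1$ via Kleinbock--Margulis, and your use of the convergence half of Theorem~\ref{thm: dream0} with $\psi_i=q^{-A}$ is an equally valid shortcut. Both then cover the multiplicative region at level $t$ by $\asymp t^{n-1}$ weighted boxes of volume $\asymp\psi(e^t)$ and apply a uniform weighted estimate. That estimate is exactly Proposition~\ref{prop:totalbound}: $\mu_d(S_F(t;\bve)\cap B_0)\ll e^t\ve_1\cdots\ve_n+\big(e^{-3t/2}(\ve_1\cdots\ve_n)^{-1}\big)^{\alpha}$ for all $\bve\in(0,1)^n$, proved via permutations and the parametrisation \eqref{eq: Para_new} rather than a single Monge chart.

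One correction to your heuristic. The error term decays because of the $e^{-t/2}$ gain from the derivative-constrained BKM bound, combined with $e^t\prod_i\ve_i\gg e^{-\mathfrak c t}$ for $\mathfrak c<1/2$, which is your reduction $\psi(q)\ge q^{-1-\ep}$ with $\ep<1/2$. The bound $k_i\le At$ only serves to count the boxes. Also note that $\sum_i k_i\approx-\log_2\psi(2^t)$, not $-\log_2(2^t\psi(2^t))$.
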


Theorem \ref{thm: dream0} is first proved for the case when the manifold is parametrized in the so-called Monge form in \S \ref{sec: div} (for divergence) and \S \ref{sec: convergence} (for convergence). The proof is then generalized to arbitrary  nondegenerate manifolds in \S \ref{sec: general}. The proof of  Theorem~\ref{thm:mult0} is presented in \S \ref{sec: multi}.

\subsection{Previous results and comparison}

We first discuss the unweighted case ($\psi_1=\cdots=\psi_n$) of Theorem~\ref{thm: dream0}. In this setting, Theorem~\ref{thm: dream0} was first established for nondegenerate $C^3$ planar curves ($n=2$) in \cite{BDV07} in the divergence case and in \cite{VV06} in the convergence case. The $C^3$ assumption was later removed in \cite{BZ10} (divergence) and \cite{Hua15} (convergence). In higher dimensions, the divergence case was proved by the first-named author in \cite{Ber12} for analytic nondegenerate manifolds and was recently extended to all nondegenerate manifolds in \cite{BD}, having previously been known only for nondegenerate curves \cite{BVVZ21}. The convergence case was settled in \cite{BY} for all nondegenerate manifolds.

Much less is known in the weighted case, which allows the approximation functions $\psi_1,\dots,\psi_n$ to be different. The story again begins with planar curves, studied in \cite{BV07}, where the authors proved both the convergence and divergence cases of Theorem~\ref{thm: dream0}: the divergence case for $C^3$ curves and the convergence case for rational quadrics. The latter was later extended to arbitrary $C^3$ nondegenerate planar curves in \cite{Bad_Lev_07}.

In higher dimensions, the general problem was posed in \cite[Problem~S2]{BV07} as follows.

\smallskip

\noindent\textbf{Problem~1:} Given a nondegenerate manifold $\cM$ and approximation functions $\psi_1,\dots,\psi_n$, find the weakest condition under which $\mathcal{S}_n(\psi_1,\dots,\psi_n)\cap\cM$ has Lebesgue measure zero.

\smallskip

Working towards this general problem, Srivastava \cite{Srivastava2025} recently established the convergence case of Theorem~\ref{thm: dream0} for a subclass of nondegenerate manifolds with a curvature condition, under the additional constraint $\psi_1=\cdots=\psi_d$ on the approximation functions. To the best of our knowledge, the divergence part of Theorem~\ref{thm: dream0} for $n>2$ and arbitrary approximation functions $\psi_1,\dots,\psi_n$ has not been previously addressed. Theorem~\ref{thm: dream0} resolves Problem~1 in full and, at the same time, generalises the results of \cite{BY} and \cite{BD} from equal to arbitrary approximation functions $\psi_1,\dots,\psi_n$.

\medskip

We now turn to the multiplicative case, concerning the set $\cSM_n(\psi)$. In this setting, progress to date has been far more limited, and even for planar curves the theory remains incomplete. Indeed, for nondegenerate planar curves, Theorem~\ref{thm:mult0} was proved in \cite{Hua15}, having been previously established for rational quadrics \cite{BV07} and under the $C^3$ assumption \cite{Bad_Lev_07}.

As with Problem~1, the general question regarding $\cSM_n(\psi)$ was posed in \cite[Problem~S2]{BV07}:

\smallskip

\noindent\textbf{Problem~2:} Given a nondegenerate manifold $\cM$ and an approximation function $\psi$, find the weakest condition under which $\cSM_n(\psi)\cap\cM$ has Lebesgue measure zero.

\smallskip

Theorem~\ref{thm:mult0} contributes towards the resolution of this problem by providing an analogue of the convergence case of Theorem~G for all nondegenerate manifolds.

A natural next step would be to prove the divergence case of Theorem~G for nondegenerate manifolds. This remains a major open problem and has so far been addressed only for lines in $\R^2$ \cite{MR4068301, MR3813593, MR4701882} and for ``vertical lines'' in higher dimensions \cite{MR4016056, MR4706444}. Even these limited cases required substantial new ideas, including the development of a Bohr sets technique \cite{MR3813593} and an effective asymptotic equidistribution method \cite{MR4701882}.

Finally, we note that our results may be viewed as complete analogues of the corresponding results for dual approximation on manifolds. In that setting, the convergence case for both weighted and multiplicative approximation was established by Bernik, Kleinbock and Margulis \cite{BKM}, while the divergence case for weighted approximation was proved in \cite{MR2989975}.

\subsection{Further remarks}

For a manifold $\cM \subset \R^n$ of dimension $d$ to be nondegenerate, it is necessary that $\cM$ be $C^l$ with ${d+l \choose l} \ge n+1$. For example, if $d=2$, this reduces to $l^2 + 3l \ge 2n$. Clearly there exist manifolds that are just $C^l$ for the smallest $l$ satisfying this inequality. The above theorems use this minimal smoothness, sufficient to meet the nondegeneracy condition.

The proof of Theorem~\ref{thm: dream0} builds on the linearization techniques developed in \cite{BY} and \cite{BD}. The new features include linearization in different coordinates with different rates, including linearization in a single direction only. For the divergence case, we use the recent \textit{ubiquity} result of Kleinbock and Wang \cite{KWang23}. To construct an appropriate ubiquitous system, we provide a version of the quantitative estimate of Bernik, Kleinbock, and Margulis \cite{BKM} that enables independent control of different partial derivatives (see \S \ref{new BKM}). At the technical level, our proof differs from the previous argument even in the \textit{unweighted} case in \cite{BD}, offering a new perspective.

Finally, we note that, while preparing this paper, we became aware of related work by Chow, Srivastava, Technau, and Yu \cite{MultCSTY} concerning Problem~2. Using a different approach initiated in \cite{SST}, they establish a multiplicative convergence result for \(C^\infty\) nondegenerate manifolds and a natural class of affine subspaces.
We emphasize that both Problem~1 and Problem~2 remain of interest and are still widely open for degenerate manifolds, such as affine subspaces and submanifolds that are nondegenerate relative to an affine subspace, as considered in \cite{MR1982150}.

\section{Set-up, notation and auxiliary statements}

\subsection{Main results through nondegenerate  maps}

We begin by recalling the definition of nondegeneracy following \cite{KM}. A map
$$
F:\bU \to \R^n
$$
is called $l$-nondegenerate at $\bx \in \bU$ if $F$ is $C^l$ on a neighbourhood of $\bx$ and there exist $n$ linearly independent partial derivatives of $F$ at $\bx$ of orders at most $l$. We say that $F$ is nondegenerate at $\bx$ if it is $l$-nondegenerate at $\bx$ for some $l$. We say that $F$ is nondegenerate in $\bU$ if it is nondegenerate at $\mu_d$-almost every $\bx \in \bU$, where $\mu_d$ denotes $d$-dimensional Lebesgue measure on $\R^d$.
Respectively, a manifold $\cM$ is called nondegenerate if almost every point on $\cM$ has a neighbourhood that can be parametrised by a nondegenerate map. Since a manifold is nondegenerate if it admits a (local) nondegenerate parametrisation, we can now restate Theorems~\ref{thm: dream0} and~\ref{thm:mult0} in terms of nondegenerate maps.

\smallskip

\begin{theorem}\label{thm: dream}
Let $F:\bU\to\R^n$ be nondegenerate, $\bU$ be an open subset of $\R^d$, $\psi_1,\dots,\psi_n:\Rp\to(0,1)$ be non-increasing. Then \begin{equation}
        \mu_d\left(F^{-1}\mathcal{S}_n(\psi_1,\dots, \psi_n)\right)=\left\{\begin{aligned}
            & 0 &\text{ if  } &\textstyle\sum\psi_1(q)\cdots \psi_n(q)<\infty\,,\\[1ex]
            & \mu_d(\bU) &\text{ if  } &\textstyle\sum \psi_1(q)\cdots \psi_n(q)=\infty\,.
        \end{aligned}\right.
    \end{equation}
\end{theorem}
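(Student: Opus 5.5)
Since $F$ is nondegenerate at almost every point and the statement is local, we will first reduce to a neighbourhood $\bU_0$ of a point $\bx_0$ where $F$ is $l$-nondegenerate. After an affine change of coordinates in $\R^d$ and a permutation of the coordinates of $\R^n$, we will parametrise $F$ in Monge form $F(\bx)=(\bx,f(\bx))$ with $f=(f_{d+1},\dots,f_n)$. The catch is that a permutation of coordinates also permutes the $\psi_i$. We will therefore prove the Monge-form case for an \emph{arbitrary} assignment of the functions $\psi_1,\dots,\psi_n$ to coordinates, and only afterwards handle general parametrisations. This is presumably the content of the later section on general manifolds, where non-Monge charts, such as a direction in which the weights are not aligned with a graph structure, are reduced via an implicit-function argument.

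\textbf{Convergence.} Fix a large dyadic range $q\sim 2^t$. The aim is to show that the measure of $\bx\in\bU_0$ satisfying \eqref{eq1.1} with $q\sim 2^t$ is $\ll 2^t\psi_1(2^t)\cdots\psi_n(2^t)$ up to a summable error. We will split the rational points near the manifold into two classes.
\begin{itemize}
\item \emph{Major arcs.} Here linearisation succeeds: near $\bx$, the condition $|qF(\bx)-\bp|<\psi_i$ is governed by a small box in a suitable lattice, determined by $\nabla f$ and scaled differently in each coordinate, namely $\psi_i/q$ in direction $i$. The count of lattice points in the box gives the expected volume.
\item \emph{Minor arcs.} These are the points where some dual quantity (a small integer vector $\mathbf{a}$ with $|\mathbf{a}\cdot F(\bx)+a_0|$ and $|\nabla(\mathbf{a}\cdot F)|$ both small) exists.
\end{itemize}
The minor-arc set is controlled by a weighted, quantitative version of Bernik--Kleinbock--Margulis, obtained from the Kleinbock--Margulis quantitative nondivergence applied to the flow $g_t=\diag(e^{t},e^{-\tau_1 t},\dots)$ adapted to the weights. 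Here the $\tau_i$ are chosen with $2^{-t\tau_i}\approx\psi_i(2^t)$. The result bounds this set by $2^{-\delta t}$ times a power loss, which is summable.

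\textbf{Divergence.} We will use the Kleinbock--Wang ubiquity theorem. We choose as resonant points $\bp/q$ near $F(\bU_0)$ with $q\le Q$, paired with neighbourhoods of weighted shape (rectangles with side lengths $\asymp\psi_i(Q)/Q$ in the free coordinates). Ubiquity then amounts to two requirements:
\begin{itemize}
\item by Minkowski/Dirichlet, every $\bx$ has a rational approximation $|qF(\bx)-\bp|<\rho_i$ with $q\le Q$ and $\prod\rho_i\asymp Q^{-1}$;
\item for a large proportion of $\bx$, this approximation is moreover nondegenerate, that is, it comes from a genuine rational point within the rectangle rather than from a degenerate sublattice.
\end{itemize}
The second requirement again follows from the weighted BKM estimate, which excludes, on a set of small measure, those $\bx$ for which the lattice $g_t u_{F(\bx)}\Z^{n+1}$ has short vectors with small derivatives. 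Reducing to functions $\psi_i(q)=q^{-\tau_i}\cdot(\text{slowly varying})$ via standard dyadic regularisation then transfers full measure through the ubiquity lemma.

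\textbf{Main obstacle.} The main difficulty is the weighted BKM estimate with independent control of the different partial derivatives, needed for both halves, and in particular its uniformity when some $\psi_i$ are nearly constant. That case forces linearisation in only some directions. I would first try to derive it from quantitative nondivergence for the skew-gradient map $(F,\nabla F)$ under an $n$-parameter diagonal flow, checking the $(C,\alpha)$-good and norm-lower-bound conditions on each relevant primitive subgroup.
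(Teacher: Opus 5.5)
Your architecture is the paper's: major and minor arcs controlled by a BKM-type bound with separate control of each partial derivative, and Kleinbock--Wang ubiquity with full-rank approximations guaranteed off a BKM-small set. But the step you defer is the crux, and as you have set it up it fails.

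You plan to prove the Monge case $F(\bx)=(\bx,f(\bx))$ for an \emph{arbitrary} assignment of the $\psi_i$ to coordinates. That would already be the whole theorem, so your ``afterwards'' step is empty. The linearisation you describe cannot deliver it. Dually, $\partial_iF(\bx)\ba^\top=a_i+\sum_j\partial_if_j(\bx)a_{d+j}$, so the free coordinate $a_i$ of a dual vector is only bounded by $K_i+C\max_j\psi_{d+j}^{-1}$. If some $f_j$ depending on $x_i$ carries a weight $\psi_{d+j}$ much smaller than $\psi_i$, then $T_i\gg\psi_i^{-1}$ and the BKM error term $(\delta K\,T_1\cdots T_n/\max_iT_i)^{\alpha}$ is no longer small. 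For example, for $(x_1,x_2,f(x_1,x_2))$ with $\psi_1=\psi_2=\tfrac12$ and $\psi_3(q)=q^{-1-\mathfrak c}$, it is at least of order $e^{t}$ at scale $q\approx e^t$. In the direct problem the same misalignment means the Taylor error $|\partial_if_j|\,|qx_i-p_i|$ is not dominated by $\psi_{d+j}$. This undermines the linearisation behind both the major-arc count and the projection of Dirichlet approximations of $F(\bx)$ to rational points with $|qf_j(\ba/q)-b_j|<\psi_{d+j}(q)$, which ubiquity needs. In that example the adapted chart takes $y_1=f(x_1,x_2)$ as a free variable. For the cylinder $(x,\sqrt{1-x^2},z)$ with $\psi_1\le\psi_2\le\psi_3$, no graph chart over the two coordinates of smallest weight exists at all.

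Two ideas are missing. First, a reduction to a fixed order $\psi_1\le\cdots\le\psi_n$. This is not automatic, because the order of the values $\psi_i(q)$ may change with $q$. The paper splits $\N$ into finitely many subsequences according to the ordering permutation and replaces each $\psi_i$ by a monotone step function; one class with divergent sum suffices for divergence, and every class is treated for convergence. Second, a chart adapted to that order. Near almost every point one chooses free coordinates greedily in increasing order of weight, giving $(\bx_1,g_1(\bx_1),\bx_2,g_2(\bx_1,\bx_2),\dots,\bx_{\fs},g_{\fs}(\bx))$, where each dependent block depends only on free variables of smaller weight. This is exactly what makes the dual heights satisfy $T_i\ll K_i+(\min_{j\in J(k)}\ve_j)^{-1}$ with $(\min_{j\in J(k)}\ve_j)^{-1}\le\ve_i^{-1}$ for $i\in I(k)$, and what makes the projection condition involve only $\bar I_k$.

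There are also smaller omissions:
\begin{itemize}
\item The minor-arc bound is summable only once $e^t\prod_i\psi_i(e^t)\ge e^{-\mathfrak c t}$ with $\mathfrak c<1/2$. So the $\psi_i$ must first be enlarged while preserving monotonicity, order and convergence.
\item For ubiquity the paper linearises in the single direction $x_1$, with $\ve_1'=(\ve_2\cdots\ve_nQ)^{-1}$ and $\ve_i'=\ve_i$ otherwise. It restricts to scales with $Q\psi_2(Q)\cdots\psi_n(Q)>Q^{s}$, and handles $Q\prod_i\psi_i(Q)\ge1$ directly by Minkowski's theorem.
\item Your regularisation to $q^{-\tau_i}$ times a slowly varying factor is not available for general monotone $\psi_i$. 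It is also not needed, since Theorem~KW applies directly to $\Psi_i(q)=\psi_i(q)/(2q)$ along $u_t=2^t$.
\end{itemize}
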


\medskip

\begin{theorem}\label{thm:mult} Let $F,n,d$ be as in Theorem~\ref{thm: dream} and $\psi:\Rp\to(0,1)$ be non-increasing. Then
$$\textstyle\mu_d\big(F^{-1}\cSM_n(\psi)\big) =0\quad \text{if}\quad \sum \psi(q) (\log q)^{n-1}<\infty.
$$
\end{theorem}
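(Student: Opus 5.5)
The plan is to deduce Theorem~\ref{thm:mult} from the convergence half of Theorem~\ref{thm: dream}. We cover $\cSM_n(\psi)$ by a countable union of weighted sets whose weight sums are controlled by $\sum\psi(q)(\log q)^{n-1}$.

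First we reduce to a dyadic setting. Since $\psi$ is non-increasing, $\sum\psi(q)(\log q)^{n-1}<\infty$ is equivalent to $\sum_t 2^t\psi(2^t)\,t^{n-1}<\infty$. We may also assume $\psi(q)\ge q^{-1-n}$, say: replacing $\psi$ by $\max\{\psi,q^{-n-1}\}$ keeps the series convergent and only enlarges the set. We can further assume $\cM$ is bounded, so that $|qx_i-p_i|\ll q$ for the relevant $p_i$. Next we localise the shape of the approximating box. Suppose $\prod_i|qx_i-p_i|<\psi(q)$ with $2^{t}\le q<2^{t+1}$. Each factor is at most $\ll 1$ after discarding the trivially handled case $|qx_i-p_i|\ge 1/2$; there $p_i$ can be shifted, or the remaining product bound only gets stronger. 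We can also assume each factor is at least $\psi(q)\,2^{-O(t)}$, since smaller factors are covered by the case where that coordinate's exponent is truncated. We then dyadically pigeonhole each factor: $|qx_i-p_i|<2^{-k_i}\theta_t$, where the integer vector $\mathbf{k}=(k_1,\dots,k_n)$ has $k_i\ge0$, $\sum k_i\approx \log_2(1/(2^t\psi(2^t)))=:m_t$, and $\theta_t$ is a fixed normalisation. This gives at most $O(m_t^{\,n-1})=O(t^{n-1})$ choices of $\mathbf{k}$ per block, each with $\prod_i 2^{-k_i}\asymp \psi(2^t)$ up to a constant factor $2^n$.

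The key step is to regroup these boxes into finitely many families of weighted problems, one per ``direction'', so that the convergence part of Theorem~\ref{thm: dream} applies with non-increasing functions. The obstacle is that the choice of $\mathbf{k}$ varies with $t$, while Theorem~\ref{thm: dream} requires fixed non-increasing $\psi_1,\dots,\psi_n$. My first attempt avoids the theorem as a black box: I would use the estimate underlying its convergence proof. Presumably this is a bound of the form
\[
\mu_d\Big(\{\bx\in \bU: \exists\, 2^t\le q<2^{t+1},\ |qF_i(\bx)-p_i|<\delta_i\ \forall i\}\Big)\ll 2^{t}\delta_1\cdots\delta_n+E_t(\delta),
\]
uniform in $\delta_i$ within an admissible range, with a summable error $E_t$. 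This is the weighted counting and linearisation lemma, including the regime where only some directions are linearised. Summing over $\mathbf{k}$ and $t$, the main terms give $\sum_t t^{n-1}2^t\psi(2^t)<\infty$, and Borel--Cantelli finishes the argument.

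The main obstacle is ensuring the error term is summable uniformly over the $\asymp t^{n-1}$ shapes $\mathbf{k}$, including very skewed ones where some $\delta_i$ is tiny. Here I would exploit the truncation $\psi\ge q^{-n-1}$ so that all $\delta_i\ge 2^{-O(t)}$, and I would need the error to decay like $2^{-\eta t}$ uniformly in the shape. If that uniformity fails for extreme shapes, the fallback is to enlarge the tiny $\delta_i$ up to a threshold $2^{-Ct}$. This only costs a factor absorbed by choosing the truncation appropriately, and it keeps the problem in the range where the quantitative nondegeneracy (BKM-type) estimate with independent control of partial derivatives gives exponential decay.
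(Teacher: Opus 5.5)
Your outline matches the paper's proof. You split each block $e^{t-1}<q\le e^t$ into $\asymp t^{n-1}$ weighted boxes whose sides multiply to $\asymp\psi(e^t)$, apply a uniform weighted measure bound to each, and finish with Borel--Cantelli. The bound you ``presume'' is exactly Proposition~\ref{prop:totalbound}:
\[
\mu_d\big(S_F(t;\ve_1,\dots,\ve_n)\cap B_0\big)\ll e^t\ve_1\cdots\ve_n+\Big(e^{-3t/2}(\ve_1\cdots\ve_n)^{-1}\Big)^{\alpha}.
\]
It holds for every $\bve\in(0,1)^n$, with the chain condition arranged by permuting coordinates. So uniformity over skewed shapes, which you single out as the main obstacle, is not an obstacle: the error depends on $\bve$ only through the product $\ve_1\cdots\ve_n$.

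The genuine gap is that you control the wrong quantity, at the wrong level. The error term is summable in $t$ only if $\ve_1\cdots\ve_n\ge e^{-t(3/2-\eta)}$ for some $\eta>0$, that is, only if $\psi(e^t)$ is not much below $e^{-t}$. Your truncation $\psi(q)\ge q^{-n-1}$ gives only $\ve_1\cdots\ve_n\gg e^{-(n+1)t}$. The error bound can then be as large as $e^{\alpha t(n-1/2)}$, and the series diverges. Your fallback of raising individual tiny $\delta_i$ to $2^{-Ct}$ does not help, because the product, not the individual sides, must be bounded below. The fix is to truncate at $q^{-1-\mathfrak c}$ with $0<\mathfrak c<1/2$, as in \eqref{eq: lower bound on psi}. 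This is allowed because $\sum q^{-1-\mathfrak c}(\log q)^{n-1}<\infty$, and it makes the error $\ll e^{-t\alpha(1/2-\mathfrak c)}$ uniformly in the shape, which survives multiplication by $t^{n-1}$. Your normalisation also needs tidying: with $k_i\ge0$ you need $\theta_t\asymp1$ and $\sum_i k_i\approx\log_2(1/\psi(2^t))$, not $\log_2(1/(2^t\psi(2^t)))$; otherwise factors of size $\asymp1$ are not covered.

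With these repairs your argument works. Your device of lowering the exponents $k_i$ until $\sum_i k_i$ hits the target exactly only enlarges the box, so very small factors need no separate treatment. This is a small simplification over the paper, which handles them through the extra set $\tilde S_F(t;w_0)$ and an extremality result (Kleinbock--Margulis, or a direct Borel--Cantelli argument).
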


\subsection{Assumptions on approximation functions}\label{sec: con on psi div}

Let $\psi_i:\N\to(0,1)$ be non-increasing for every $i=1,\dots,n$. First, we justify that, while proving Theorem~\ref{thm: dream}, without loss of generality the following chain condition may be assumed:
\begin{equation}\label{eq: psi chain}
    \psi_1(q)\leq \cdots\leq \psi_n(q)\qquad\text{for all }q.
\end{equation}

We begin by noting that there is a non-empty subset $\Pi$ of permutations $\pi$ of $(1,\dots,n)$ and disjoint strictly increasing sequences $(q^{\pi}_k)_{k\in\N}$, where $\pi\in\Pi$, 
such that for every $\pi\in\Pi$ 
\begin{equation}\label{eq2.3-}
\textstyle\N\setminus\bigcup_{\pi\in\Pi}\{q_k^\pi:k\in\N\}\qquad\text{is finite}
\end{equation}
and
\begin{equation}\label{eq2.3}
\psi_{\pi(1)}(q^{\pi}_k) \leq \cdots \leq \psi_{\pi(n)}(q^{\pi}_k)
\quad \text{for all } k\in\N.
\end{equation}
Set $q^\pi_0:=0$, and for each $\pi\in\Pi$ define
$$
\psi^{\pi}_i(q):=\psi_i(q^{\pi}_{k})  \quad \text{for } q^{\pi}_{k-1} < q \leq q^{\pi}_{k}.
$$
Then for each $\pi\in\Pi$ each function $\psi^\pi_i$ is non-increasing,
\begin{equation}\label{eq: psi chainpi}
    \psi^\pi_{\pi(1)}(q)\leq \cdots\leq \psi^\pi_{\pi(n)}(q)\qquad\text{for all }q,
\end{equation}
and furthermore 
\begin{equation}\label{eq2.6}
\psi^\pi_i(q)\leq \psi_i(q)\quad\text{for all $q\geq 1$}\qquad\text{and}\qquad \psi^\pi_i(q^\pi_k)= \psi_i(q^\pi_k)\quad\text{for for all $k$.} 
\end{equation}
Hence, since $\Pi$ is obviously finite, by \eqref{eq2.3-} and \eqref{eq2.6}, for each $\pi\in\Pi$,
\begin{equation}\label{eq2.7}
\mathcal{S}_{n}(\psi^\pi_1,\ldots,\psi^\pi_n)
\subset
\mathcal{S}_{n}(\psi_1,\ldots,\psi_n).
\end{equation}
Moreover, by \eqref{eq2.3-},
\begin{equation}\label{eq2.8}
\mathcal{S}_{n}(\psi_1,\ldots,\psi_n)
=
\bigcup_{\pi\in\Pi}\mathcal{S}_{n}(\psi^\pi_1,\ldots,\psi^\pi_n).
\end{equation}

Now, if the divergence sum condition in Theorem~\ref{thm: dream} is satisfied, then, by \eqref{eq2.3-} and \eqref{eq2.6}, there exists some $\pi\in\Pi$ such that
$$
\sum_{q=1}^\infty \psi_1^\pi(q)\cdots \psi_n^\pi(q)\ge \sum_{k=1}^\infty \psi_1(q_k^\pi)\cdots \psi_n(q_k^\pi)=\infty,
$$
and, by \eqref{eq2.7}, we may work with $\mathcal{S}_{n}(\psi^\pi_1,\ldots,\psi^\pi_n)$ instead of $\mathcal{S}_n(\psi_1,\dots,\psi_n)$, or equivalently, in view of \eqref{eq: psi chainpi}, we may assume \eqref{eq: psi chain} after permuting the approximation functions. Note that such a permutation preserves the nondegeneracy of $F$.

In turn, if the convergence sum condition in Theorem~\ref{thm: dream} is satisfied, then, by \eqref{eq2.6}, for all $\pi\in\Pi$
$$
\sum_{q=1}^\infty \psi_1^\pi(q)\cdots \psi_n^\pi(q)\le \sum_{q=1}^\infty \psi_1(q)\cdots \psi_n(q)<\infty,
$$
and, by \eqref{eq2.8}, we may work with each individual set $\mathcal{S}_{n}(\psi^\pi_1,\ldots,\psi^\pi_n)$ for fixed $\pi$ instead of $\mathcal{S}_n(\psi_1,\dots,\psi_n)$, or again equivalently we may assume \eqref{eq: psi chain} after permuting the approximation functions.

\medskip

Next, while proving the convergence case of Theorem~\ref{thm: dream}, we may additionally assume, without loss of generality, that for any fixed $\mathfrak c>0$,
\begin{equation}\label{eq: prod is greater than}
\psi_1(q)\cdots\psi_n(q)\, q > q^{-\mathfrak c}
\qquad\text{for all } q\ge1.
\end{equation}
To justify this assumption we will use the following 

\begin{proposition}\label{prop2.1}
Let $\psi_1,\dots,\psi_n:\N\to(0,1)$ be non-increasing and satisfy the chain condition
\eqref{eq: psi chain}. Let $\Phi:\N\to(0,1)$ be any non-increasing function. Then, there exist non-increasing functions
$\psi'_1,\dots,\psi'_n:\N\to(0,1)$ such that for all $q\ge 1$
\begin{equation}\label{eq2.6++}
\psi_i(q)\le \psi_i'(q)\qquad(1\le i\le n),
\end{equation}
\begin{equation}\label{eq2.6+++}
\prod_{i=1}^n \psi_i'(q)
=\max\left\{\prod_{i=1}^n\psi_i(q),\,\Phi(q)\right\},
\end{equation}
and
\begin{equation}\label{eq: psi chain'}
    \psi'_1(q)\leq \cdots\leq \psi'_n(q).
\end{equation}
\end{proposition}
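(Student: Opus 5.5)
We will construct the functions $\psi'_i$ recursively in $q$. At each step we choose a point in a box that is squeezed between $(\psi_i(q))_i$ from below and $(\psi'_i(q-1))_i$ from above, and we locate the required product along a monotone path by the intermediate value theorem. Throughout write
$$
T(q):=\max\Big\{\prod_{i=1}^n\psi_i(q),\,\Phi(q)\Big\}.
$$
Since both $\prod_i\psi_i$ and $\Phi$ are non-increasing with values in $(0,1)$, so is $T$. The requirements \eqref{eq2.6++}, \eqref{eq2.6+++}, \eqref{eq: psi chain'} are pointwise in $q$. The only requirement linking different values of $q$ is monotonicity, and the recursion builds it in through the upper bound $\psi'_i(q)\le\psi'_i(q-1)$.

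\emph{Base step $q=1$.} Set $\psi'_i(1):=\max\{\psi_i(1),\lambda\}$, where $\lambda\in(0,1)$ is chosen so that $\prod_i\max\{\psi_i(1),\lambda\}=T(1)$. Such a $\lambda$ exists because the function $\lambda\mapsto\prod_i\max\{\psi_i(1),\lambda\}$ is continuous and non-decreasing. As $\lambda\to 0$ it tends to $\prod_i\psi_i(1)\le T(1)$, and as $\lambda\to1$ it tends to $1>T(1)$. Taking the maximum with a constant preserves the ordering in $i$, so \eqref{eq: psi chain'} holds at $q=1$. Clearly $\psi_i(1)\le\psi'_i(1)<1$.

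\emph{Inductive step.} Suppose $\psi'_i(q-1)$ have been defined, with the chain condition, with $\psi'_i(q-1)\ge\psi_i(q-1)\ge\psi_i(q)$, and with $\prod_i\psi'_i(q-1)=T(q-1)$. For $s\in[0,1]$ define
$$
x_i(s):=\max\big\{\psi_i(q),\,\min\{\psi'_i(q-1),\,s\}\big\}.
$$
Each $x_i$ is continuous and non-decreasing in $s$, with $x_i(0)=\psi_i(q)$ and $x_i(1)=\psi'_i(q-1)$; the latter uses $\psi_i(q)\le\psi'_i(q-1)<1$. For every fixed $s$ the vector $(x_i(s))_i$ is non-decreasing in $i$. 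This holds because both $(\psi_i(q))_i$ and $(\psi'_i(q-1))_i$ are non-decreasing in $i$, and taking $\min$ or $\max$ with a constant preserves this. The product $P(s)=\prod_ix_i(s)$ is continuous, with $P(0)=\prod_i\psi_i(q)\le T(q)$ and $P(1)=T(q-1)\ge T(q)$. By the intermediate value theorem pick $s_q$ with $P(s_q)=T(q)$, and set $\psi'_i(q):=x_i(s_q)$. This choice gives \eqref{eq2.6+++} and \eqref{eq: psi chain'} at $q$. It also gives $\psi_i(q)\le\psi'_i(q)\le\psi'_i(q-1)$, which is \eqref{eq2.6++} together with monotonicity. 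Finally the values stay in $(0,1)$, since $\psi'_i(q)\ge\psi_i(q)>0$ and $\psi'_i(q)\le\psi'_i(q-1)<1$.

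\emph{Main obstacle.} The delicate point is monotonicity in $q$. The naive global choice $\psi'_i(q)=\max\{\psi_i(q),\lambda(q)\}$, which fills the smallest coordinates first, can fail. For example, take $n=2$, $\Phi\equiv0.09$, $\psi(q)=(0.1,0.9)$ and $\psi(q+1)=(0.1,0.1)$. The naive rule forces $\psi'_1$ to jump from $0.1$ to $0.3$. The recursive construction avoids this by always capping $x_i(s)$ at the previous value $\psi'_i(q-1)$. What makes the cap harmless is that the capped upper corner already has product $T(q-1)\ge T(q)$, because $T$ is non-increasing. Therefore the cap never prevents us from reaching the target product $T(q)$.
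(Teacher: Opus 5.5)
Your proof is correct and follows essentially the paper's route: both construct $\psi'_i(q)$ recursively and apply the intermediate value theorem to the product along a path from $(\psi_i(q))_i$ to $(\psi'_i(q-1))_i$. That path stays inside the box between these two corners and inside the cone $\xi_1\le\cdots\le\xi_n$. The paper walks along the straight segment (which lies in this set by convexity) and splits into three cases, whereas your clamped path together with $P(0)\le T(q)\le P(1)$ handles every case with a single application of the intermediate value theorem. Your base step is also more careful than the paper's: the paper sets $\psi'_i(1):=\psi_i(1)$, and so only satisfies \eqref{eq2.6+++} at $q=1$ when $\Phi(1)\le\prod_i\psi_i(1)$.
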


\medskip

With this proposition at hand, justifying \eqref{eq: prod is greater than} becomes simple. We take $\Phi(q)=q^{-1-\mathfrak{c}}$ and define $\psi'_i$ from the proposition. Then
\begin{equation}\label{eq2.13}
\sum_{q=1}^\infty \psi'_{1}(q)\cdots\psi'_{n}(q)
\leq \sum_{q=1}^\infty \psi_{1}(q)\cdots\psi_{n}(q)
+\sum_{q=1}^\infty q^{-1-\mathfrak{c}}
<\infty
\end{equation}
and
$$
\mathcal{S}_n(\psi_1,\dots,\psi_n)
\subset \mathcal{S}_n(\psi_1',\dots,\psi_n').
$$
Hence we may work with the set
$\mathcal{S}_n(\psi_1',\dots,\psi_n')$
instead of $\mathcal{S}_n(\psi_1,\dots,\psi_n)$,
or equivalently we may assume \eqref{eq: prod is greater than}.

\begin{proof}[Proof of Proposition~\ref{prop2.1}]
For $n=1$ the proof is trivial as $\psi'_1$ can be simply defined by equation \eqref{eq: psi chain'}. Clearly, this defines a  nonincreasing function satisfying \eqref{eq2.6++}, with \eqref{eq: psi chain'} becoming trivial. For $n>1$ the existence of the functions $\psi_i'$ is non-trivial. We will establish it on modifying the functions
$\psi_i$ inductively. We begin by setting
$$
\psi_i'(1):=\psi_i(1)\qquad(1\le i\le n).
$$
Let $q\ge1$ and suppose that the values of non-increasing functions $\psi_i'$ have been defined for all
arguments up to $q$ in such a way that \eqref{eq2.6++}, \eqref{eq2.6+++} and
\eqref{eq: psi chain'} hold. Let
\begin{equation}\label{eq2.13+xy}
\delta_0:=\prod_{i=1}^n\psi'_i(q),\qquad
\delta_1:=\prod_{i=1}^n\psi_i(q+1),\qquad
\delta_2:=\Phi(q+1).
\end{equation}
To define the functions $\psi'_i$ at $q+1$ we distinguish three cases.

\medskip

\noindent\textbf{Case 1:}
$\delta_0\le\max\{\delta_1,\delta_2\}$.
In this case we set
\begin{equation}\label{eq2.12vb}
\psi_i'(q+1):=\psi_i'(q)
\qquad(1\le i\le n).
\end{equation}
Since the functions $\psi_i$ are non-increasing, \eqref{eq2.6++} at $q+1$ follows from \eqref{eq2.12vb} and \eqref{eq2.6++} at $q$, and \eqref{eq: psi chain'} at $q+1$ follows from \eqref{eq2.12vb} and \eqref{eq: psi chain'} at $q$. By \eqref{eq2.12vb} and the assumption of this case, we have an upper bound in \eqref{eq2.6+++} at $q+1$. Finally note that the right hand side of \eqref{eq2.6+++} is non-increasing as the maximum of two non-increasing functions. Hence, the lower bound in \eqref{eq2.6+++} at $q+1$ follows from \eqref{eq2.12vb} and \eqref{eq2.6+++} at $q$. Thus,  \eqref{eq2.6+++} holds at $q+1$.

\medskip

\noindent\textbf{Case 2:}
$\delta_0>\delta_1\ge\delta_2$. Then we define
$$
\psi'_i(q+1):=\psi_i(q+1).
$$
This definition satisfies all the required conditions. Indeed,
\eqref{eq2.6++} at $q+1$ holds, and the monotonicity condition follows from
$$
\psi'_i(q+1)=\psi_i(q+1)\le\psi_i(q)\le\psi'_i(q).
$$
Condition \eqref{eq: psi chain'} at $q+1$ follows from \eqref{eq: psi chain};
and finally \eqref{eq2.6+++} at $q+1$ holds since
$$
\prod_{i=1}^n\psi'_i(q+1)=\prod_{i=1}^n\psi_i(q+1)=\delta_1=\max\{\delta_1,\delta_2\}.
$$

\medskip

\noindent\textbf{Case 3:}
$\delta_0>\delta_2>\delta_1$. Then we consider the rectangle
$$
R:=\prod_{i=1}^n\big[\psi_i(q+1),\,\psi_i'(q)\big].
$$
Since $\psi_i(q+1)\le\psi_i(q)\le\psi'_i(q)$, the rectangle $R$ is non-empty, with
some sides possibly reduced to a single point. Clearly, $R$ is a convex subset
of $\R^n$. Define also
$$
C:=\{(\xi_1,\dots,\xi_n)\in\R^n:\xi_1\le\xi_2\le\cdots\le\xi_n\},
$$
which is again convex. Hence $R\cap C$ is convex.
Furthermore,
$$
(\psi_i(q+1))_{i=1}^n\in R\cap C
\qquad\text{and}\qquad
(\psi'_i(q))_{i=1}^n\in R\cap C.
$$
By convexity, the line segment $L$ connecting these points is contained in
$R\cap C$. As $(\xi_i)_{i=1}^n$ varies along $L$, the continuous function
$$
(\xi_1,\dots,\xi_n)\longmapsto \prod_{i=1}^n \xi_i
$$
takes all values in the interval
$
I:=[\delta_1,\delta_0].
$
Since $\delta_2\in I$, there is a point $(\xi_i)_{i=1}^n\in L\subset R\cap C$
such that $\prod_{i=1}^n\xi_i=\delta_2$. We then define
$$
\psi'_i(q+1):=\xi_i\qquad(1\le i\le n).
$$
This choice satisfies all the required conditions.
Indeed, \eqref{eq2.6++} at $q+1$ holds since $(\xi_i)_i\in R$; the monotonicity
condition for $\psi'_i$ follows from the inequality $\xi_i\le\psi'_i(q)$; condition \eqref{eq: psi chain'} at $q+1$
holds since $(\xi_i)_i\in C$, and \eqref{eq2.6+++} holds since
$$
\prod_{i=1}^n\psi'_i(q+1)=\delta_2=\max\{\delta_1,\delta_2\}.
$$
This completes the induction step. Finally, since $\psi'_i$ are non-increasing and
$\psi'_i(1)=\psi_i(1)$ are between $0$ and $1$, we have that $\psi'_i:\N\to(0,1)$.
\end{proof}

\subsection{A special parametrization}\label{sec: parametrization} 
We will start by  giving a proof of Theorem~\ref{thm: dream} for maps
$F:\bU \subset \R^d \to \R^n$ of the form
\begin{equation}\label{eq: F}
F(\bx) = (\bx, f(\bx)), \qquad \bx \in \bU, \quad
f(\bx) = (f_i(\bx))_{i=1}^m,
\end{equation}
where $m = n - d$. In general, a nondegenerate map $F$ cannot always be reduced
to this form by a change of variables while preserving the chain condition
\eqref{eq: psi chain}. Consequently, in \S\ref{sec: general} we extend the proof
from this special case to the general setting, thereby establishing the result
for all nondegenerate maps $F$.

Also, we assume without loss of generality that there exists $M>0$ such that 
\begin{equation}\label{eq: bound on derivatives}
   \max_{k}\max_{i,j} \sup_{\bx\in \bU}\vert\partial_{ij}f_{k}(\bx)\vert\leq M.
\end{equation}

\subsection{Quantitative Nondivergence}\label{new BKM} 

Our proofs will require an effective measure estimate for the following set 
\begin{align}
\mathfrak{S}_{F}&(\delta,\bK,\bT)=\nonumber\\[1ex]
&=\left\{\bx\in \bU:\exists\;(a_0,\ba)\in\Z\times\Z^n_{\neq\bf0}\;\;\text{such that }\left.
\begin{array}{l}
|a_0+F(\bx)\ba^\top|<\delta\\[1ex]
\vert \partial_iF(\bx)\ba^\top\vert<K_i\;\;(1\le i\le d) \\[1ex]
\vert a_k\vert<T_k ~~(1\leq k\leq n) 
\end{array}
\right.\right\},\label{eq: new set}
\end{align}
where $\ba=(a_1,\dots,a_n)$, $\bT=(T_1,\dots,T_n)$, $\bK=(K_1,\dots,K_d)$, and $\delta,K_1,\cdots,K_d,T_1,\cdots, T_n$ are positive real parameters.

The following result represents a generalisation of \cite[Theorem 1.4]{BKM}
and appears in \cite[Theorem 3.1]{BD} when $K_1=\dots=K_d$. Its proof in the general case follows the same lines as that of \cite[Theorem 3.1]{BD} with minor modifications which will be explained below.

\begin{theorem}\label{BKM_new2}
Let $F:\bU\to\R^n$ be as defined in \eqref{eq: F}, $l$-nondegenerate at $\bx_0$. 
Then there exists a ball $B_0\subset\bU$ centred at $\bx_0$ and a constant $E>0$ depending on $B_0$ and $F$ such that for any choice of $\delta,K_1,\cdots,K_d,T_1,\cdots, T_n$ satisfying 
\begin{equation}\label{eqn5.2}
0<\delta\le 1,\qquad T_1,\dots,T_n\ge1,\qquad K_1,\cdots,K_d>0 \quad\text{and}\quad \delta^{n}<K \frac{T_1\cdots T_n}{\max_i T_i},
\end{equation}
where $K:=\max\{K_1,\dots,K_d\}$, and any ball $B\subset B_0$ of radius $0<r\le1$ we have that
\begin{equation}\label{eqn5.3_0}
\begin{array}[b]{l}
\displaystyle\rule{0ex}{3ex}\mu_d\big(\mathfrak{S}_{F}(\delta,\bK,\bT)\cap B\big)\;\\[1ex] \displaystyle\ll \delta \prod_{i=1}^d\min\{K_i,T_i\}\cdot(T_{d+1}\cdots T_n)\mu_d(B)+ E\left(\delta \min\{K,r^{-1}\}\frac{T_1\cdots T_n}{\max_i T_i}\right)^{\alpha},
\end{array}
\end{equation}
where $\alpha=\frac{1}{d(2l-1)(n+1)}$ and the implied constant depends on $l$, $m$ and $d$ only.
\end{theorem}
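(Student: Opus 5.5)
We follow the argument of \cite[Theorem 3.1]{BD}, which in turn adapts \cite[Theorem 1.4]{BKM}. The set $\mathfrak{S}_{F}(\delta,\bK,\bT)\cap B$ splits into two parts according to the size of the full gradient $\nabla(F(\bx)\ba^\top)$. The first part consists of those $\bx$ for which a solution $(a_0,\ba)$ additionally satisfies a ``large gradient'' condition, roughly $\|\nabla F(\bx)\ba^\top\|\ge \min\{K,r^{-1}\}$-scaled thresholds chosen coordinatewise. The second part is its complement, where the whole system (with all derivatives small) holds. The first part will produce the main term $\delta\prod_{i=1}^d\min\{K_i,T_i\}\cdot T_{d+1}\cdots T_n\,\mu_d(B)$, and the second part the error term with exponent $\alpha$.

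For the main term, we use the Monge form $F(\bx)=(\bx,f(\bx))$. Then $\partial_iF(\bx)\ba^\top=a_i+\partial_i f(\bx)\,(a_{d+1},\dots,a_n)^\top$. Fix $(a_{d+1},\dots,a_n)$, giving $\ll T_{d+1}\cdots T_n$ choices. For each $i\le d$, the number of admissible $a_i$ is $\ll\min\{K_i,T_i\}+1$, because $a_i$ must lie within $K_i$ of a bounded quantity and also satisfy $|a_i|<T_i$. The ``$+1$'' is absorbed after arranging that it only matters when $K_i<1$; that regime is handled by the nondivergence part, or else by the large gradient piece in that direction. For each fixed $\ba$ with a large gradient in some direction, the set of $\bx\in B$ with $|a_0+F(\bx)\ba^\top|<\delta$ over all $a_0$ has measure $\ll \delta\,\mu_d(B)$. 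This is the standard one-dimensional estimate via Fubini in the direction of the large partial derivative, using the bound \eqref{eq: bound on derivatives} on second derivatives to keep that derivative large over subballs. Summing over the admissible $\ba$ gives the first term. The modification relative to \cite{BD} is only that the gradient is controlled coordinatewise, with separate $K_i$, and not by the single $K$.

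For the error term, we consider the points where all of $|a_0+F\ba^\top|<\delta$, $|\partial_iF\ba^\top|<K_i\le K$ and $|a_k|<T_k$ hold. We enlarge each $K_i$ to $K$ (or to $\min\{K,r^{-1}\}$), since this only enlarges the set. We then apply the quantitative nondivergence theorem of Kleinbock--Margulis / Bernik--Kleinbock--Margulis in the form used in \cite[Theorem 3.1]{BD}. The relevant lattice is $g\,u_{F(\bx)}\Z^{n+d+1}$, with diagonal $g$ built from $\delta,K,T_1,\dots,T_n$. The $(C,\alpha)$-good property of the functions $\bx\mapsto\|$covolume of sublattices$\|$ on $B_0$ comes from $l$-nondegeneracy, which is where the exponent $\alpha=1/(d(2l-1)(n+1))$ arises. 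The condition $\delta^n<K\,T_1\cdots T_n/\max_iT_i$ in \eqref{eqn5.2} is exactly what guarantees that the normalised covolume is $<1$, so that the ``no small sublattice on $B$'' hypothesis can be checked. The estimate then yields the bound $E\big(\delta\min\{K,r^{-1}\}\,T_1\cdots T_n/\max_iT_i\big)^\alpha$. The truncation $\min\{K,r^{-1}\}$ enters because on a ball of radius $r$ the lower bound on sup-norms of the relevant polynomial-like functions only sees gradients up to scale $r^{-1}$.

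The main obstacle is the first part: carrying out the counting with independent $K_i$ so as to get $\prod_i\min\{K_i,T_i\}$ rather than $\min\{K,\max T\}^d$. Here we must choose the splitting threshold direction by direction. The plan is to declare $\bx$ ``bad'' only when all $|\partial_iF\ba^\top|$ are below their respective $K_i$, which is the defining condition and so costs nothing. We then cover the remaining configurations by fixing the index $i$ of a dominant partial derivative and running the Fubini argument along $x_i$. The nondivergence part needs essentially no change from \cite{BD}, since it uses only $K=\max K_i$.
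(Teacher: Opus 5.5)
Your architecture matches the paper's. You split by the size of $\nabla F(\bx)\ba^\top$. You bound the large-gradient part by counting $\ba$ in Monge form, which is the only place the individual $K_i$ enter, times a per-$\ba$ bound $\ll\delta\mu_d(B)$ that the paper imports from \cite{BD}. You bound the rest by \cite[Theorem 1.4]{BKM}. Two of your steps, however, fail as described.

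\emph{The splitting.} The paper uses the single threshold $r^{-1}$.

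If $K<r^{-1}$, \cite[Theorem 1.4]{BKM} applies directly. The hypothesis $\delta^n<K\,T_1\cdots T_n/\max_iT_i$ is not a covolume condition. It only ensures $\delta\le(\delta K\,T_1\cdots T_n/\max_iT_i)^{1/(n+1)}$, so that the quantity $\varepsilon=\max\{\delta,(\cdot)^{1/(n+1)}\}$ in that theorem equals the second term.

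If $K\ge r^{-1}$, one uses $\mathfrak{S}_F(\delta,\bK,\bT)\subset\mathfrak{S}_F(\delta,r^{-1},\dots,r^{-1},\bT)\cup L$. Here $L$ consists of those $\bx$ having a solution $(a_0,\ba)$ of the defining system that also satisfies $\|\nabla F(\bx)\ba^\top\|\ge r^{-1}$. Nondivergence applied to the first set is what produces $\min\{K,r^{-1}\}$. Note that replacing $K_i$ by $\min\{K,r^{-1}\}$ does not ``only enlarge the set'' when $K_i>r^{-1}$; it is legitimate only after this split.

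Your final paragraph instead declares $\bx$ bad when all $|\partial_iF(\bx)\ba^\top|<K_i$. That is the defining condition of $\mathfrak{S}_F$, so every point is bad and nothing is left for the counting. Nondivergence then gives only $E(\delta K\,T_1\cdots T_n/\max_iT_i)^{\alpha}$, which is useless when $K\gg r^{-1}$. No direction-by-direction threshold is needed. The $K_i$ enter only through the conditions $|a_i+\partial_if(\bx)\cdot(a_{d+1},\dots,a_n)|<K_i$ and $|a_i|<T_i$ in the count, so the ``main obstacle'' you single out is not one.

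\emph{The ``$+1$''.} Your claim that it can be absorbed into the nondivergence term or the large-gradient piece is unjustified, and it cannot be done: the stated bound fails when some $K_i<1$. Take $d=2$, $n=3$, $T_1=T_3=1$, $T_2=K_2=N$, $K_1=N^{-10}$ and $\delta=N^{-1/2}$.
\begin{itemize}
\item Then $a_1=a_3=0$ is forced, so the $K_1$-condition is vacuous.
\item By Dirichlet, every $\bx$ admits $1\le a_2\le N^{1/2}$ and $a_0$ with $|a_0+a_2x_2|<\delta$, so $\mathfrak{S}_F(\delta,\bK,\bT)\cap B=B$.
\item All hypotheses hold, yet for $N\ge r^{-1}$ the right-hand side is $\ll N^{-19/2}\mu_d(B)+E(N^{-1/2}r^{-1})^{\alpha}\to0$.
\end{itemize}
So the count $\ll\prod_i\min\{K_i,T_i\}$ needs $K_i\ge1$, in which case the $+1$ costs only a constant factor. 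Alternatively, the main term must carry $\max\{1,\min\{K_i,T_i\}\}$.

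The paper's proof asserts this count without comment (misprinted with $\max$), so it carries the same tacit restriction. This is harmless in \S3: there every $K_i\gg c^{n+1}$, so the loss is a factor $\ll c^{-d(n+1)}$ against $c^{n(n+1)}$ in $M_1$, and $m\ge1$.

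Relatedly, the centre $-\partial_if(\bx)\cdot(a_{d+1},\dots,a_n)$ moves by up to $\asymp r\max_jT_{d+j}$ as $\bx$ ranges over $B$. Your count is therefore valid only pointwise in $\bx$. Summing $\delta\mu_d(B)$ over all $\ba$ admissible for \emph{some} $\bx\in B$ overcounts, and turning the pointwise count into a measure bound is part of what the imported large-gradient estimate has to handle.
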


\begin{proof}
Following the proof of \cite[Theorem 3.1]{BD}, when $K \geq r^{-1}$, we have that
$$
\begin{aligned}
& \mathfrak{S}_{F}(\delta,\bK,\bT)\setminus \mathfrak{S}_{F}(\delta,r^{-1},\cdots,r^{-1},\bT)\\
& = \bigcup_{\ba} \left\{\bx\in \bU~|~\exists~ a_0\in \Z \text{ such that } \begin{aligned}
    &\vert a_0+ F(\bx)\ba^\top\vert<\delta\\
    & \Vert \nabla F(\bx)\ba^\top\Vert> r^{-1}
\end{aligned}
\right\}
\end{aligned},$$ where $\ba\in \Z^n$ be such that for some $\bx\in B_0$,
$$\vert \partial_{i}F(\bx)\ba^\top\vert< K_i\qquad (1\leq i\leq d).$$

Since $F$ is parametrized as  \eqref{eq: F}, we get that for fixed $a_{d+1},\cdots, a_{n}$ there are $\prod_{i=1}^d\max\{K_i, T_i\}$ many choices for $a_1,\cdots, a_d.$ This, together with \cite[Theorem 4]{BD} gives the first term in \eqref{eqn5.3_0}, and the second term follows from \cite[Theorem 1.4]{BKM}.
\end{proof}

\subsection{Geometry of numbers} Given a lattice $\Lambda$ in $\R^{n+1}$, 
$$
\lambda_1(\Lambda),\dots,\lambda_{n+1}(\Lambda)
$$
will denote Minkowski's {\em successive minima} of the  the lattice $\Lambda$ with respect to the closed unit Euclidean ball centered at $\mathbf{0}$. For any $g\in \GL_{n+1}(\R),$ we define the dual matrix $g^\star=\sigma^{-1}(g^{\top})^{-1}\sigma$, where $\sigma$ is the long Weyl element. We refer the reader to \cite[\S 5.1]{BD} for more details. 
For $g\in \GL_{n+1}(\R),$ by Minkowski's second theorem, we have that 
$$\lambda_{1}(g\Z^{n+1}) \cdots \lambda_{n+1}(g\Z^{n+1})\asymp_n \vert\det g\vert\,.
$$
Now we recall \cite[Lemma 3.3]{BY}.

\begin{lemma}\label{dual lemma}
    Let $g\in \mathrm{GL}_{n+1}(\R)$ then 
    \begin{equation}
        \lambda_1(g\Z^{n+1}) \lambda_{n+1}(g^\star \Z^{n+1}) \asymp_{n} 1. 
    \end{equation}
\end{lemma}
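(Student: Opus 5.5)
The plan is to work directly with the definition $g^\star=\sigma^{-1}(g^\top)^{-1}\sigma$. First, I will dispose of $\sigma$. It is a permutation matrix, hence orthogonal with $\sigma\Z^{n+1}=\Z^{n+1}$, so
$$
g^\star\Z^{n+1}=\sigma^{-1}(g^\top)^{-1}\Z^{n+1}.
$$
Since $\sigma^{-1}$ is an isometry of the Euclidean norm, the successive minima of $g^\star\Z^{n+1}$ coincide with those of the dual lattice $\Lambda^*=(g^\top)^{-1}\Z^{n+1}$, where $\Lambda=g\Z^{n+1}$. Indeed, $\Lambda^*$ is exactly the set of $\bw$ with $\langle \bw,\bv\rangle\in\Z$ for all $\bv\in\Lambda$. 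The claim therefore reduces to the classical transference estimate
$$
\lambda_1(\Lambda)\,\lambda_{n+1}(\Lambda^*)\asymp_n 1 .
$$

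For the lower bound $\lambda_1(\Lambda)\lambda_{n+1}(\Lambda^*)\gg 1$, I will take a nonzero $\bv\in\Lambda$ with $\|\bv\|=\lambda_1(\Lambda)$ and linearly independent $\bw_1,\dots,\bw_{n+1}\in\Lambda^*$ with $\|\bw_j\|\le\lambda_{n+1}(\Lambda^*)$. Because the $\bw_j$ span $\R^{n+1}$, some $\bw_j$ satisfies $\langle\bw_j,\bv\rangle\ne0$. This inner product is an integer, so
$$
1\le|\langle \bw_j,\bv\rangle|\le \lambda_1(\Lambda)\lambda_{n+1}(\Lambda^*).
$$
This step is routine.

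For the upper bound, the main step is the standard pairing inequality $\lambda_i(\Lambda)\lambda_{n+2-i}(\Lambda^*)\ge1$ for every $i$. It is proved like the lower bound. Take independent $\bv_1,\dots,\bv_i$ realising the first $i$ minima of $\Lambda$ and independent $\bw_1,\dots,\bw_{n+2-i}$ realising the first $n+2-i$ minima of $\Lambda^*$. The span of the $\bw$'s has dimension $n+2-i$ and so is not contained in the orthogonal complement of $\mathrm{span}(\bv_1,\dots,\bv_i)$, which has dimension $n+1-i$. Hence some pair has $\langle\bw_k,\bv_l\rangle\ne0$, giving
$$
1\le\|\bw_k\|\|\bv_l\|\le\lambda_{n+2-i}(\Lambda^*)\lambda_i(\Lambda).
$$
Next I combine Minkowski's second theorem for $\Lambda$ and for $\Lambda^*$, using $\det\Lambda^*=(\det\Lambda)^{-1}$:
$$
\prod_{i=1}^{n+1}\lambda_i(\Lambda)\,\lambda_{n+2-i}(\Lambda^*)\asymp_n 1 .
$$
Each factor in this product is $\ge1$, so each factor is also $\ll_n 1$. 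The factor with $i=1$ gives $\lambda_1(\Lambda)\lambda_{n+1}(\Lambda^*)\ll_n1$.

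I expect no real obstacle. The only point needing care is the bookkeeping showing that $\sigma$ does not change the successive minima, so that $g^\star\Z^{n+1}$ really is, up to an isometry, the dual lattice of $g\Z^{n+1}$. All implied constants depend only on $n$, through Minkowski's second theorem.
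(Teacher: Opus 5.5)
Your proof is correct and complete. The paper gives no proof of this lemma; it imports it from \cite[Lemma 3.3]{BY}, and your argument is the standard one behind that citation. You identify $g^\star\Z^{n+1}$ with the dual lattice $(g^\top)^{-1}\Z^{n+1}$ up to the orthogonal, integral matrix $\sigma^{-1}$, which works equally if $\sigma$ is a signed antidiagonal matrix. You then use Mahler's relations $1\le\lambda_i(\Lambda)\lambda_{n+2-i}(\Lambda^*)\ll_n 1$, which follow from the integrality of the pairing, Minkowski's second theorem and $\det\Lambda^*=(\det\Lambda)^{-1}$. Your separate lower-bound paragraph is just the case $i=1$ of the pairing inequality, so it could be dropped, but it does no harm.
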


\subsection{Ubiquitous systems for rectangles}\label{sec: abstract}
In this subsection we recall the theory developed by Kleinbock and Wang in \cite{KWang23}.
For $1\leq i\leq d$, let $U_i$ be an open subset of $\R$, $U=U_1\times\cdots\times U_d$, and let $\mathcal{R}_{\alpha}=(\mathcal{R}_{\alpha,i})_{i=1}^d$ be a family of singletons such that $\mathcal{R}_{\alpha,i}\in U_i$ for each $i$ and $\alpha$.
Here the index $\alpha$ runs over an infinite countable set $J.$ Further, let $J\ni \alpha\mapsto \beta_{\alpha}$ be a positive valued map such that for any $M>1$ the set $\{\alpha\in J: \beta_{\alpha}\leq M\}$ is finite. Let us fix an increasing sequence $(u_t)_{t\ge0}$ of positive integers, and for each $t\in\N$ define
$$J_t:=\{\alpha\in J~|~u_{t-1}\leq \beta_\alpha\leq u_t\}.
$$
For any $\vv r=(r_i)_{i=1}^d\in \Rp^d$ and any $\mathcal{R}_\alpha$, we define the rectangle
\begin{equation}\label{Delta1}
\tDelta(\mathcal{R}_{\alpha}, \mathbf{r}):=\prod_{i=1}^d B(\mathcal{R}_{\alpha, i}, r_i).
\end{equation}

\begin{definition}
Let $\brho:=(\rho_i)_{i=1}^d:\R_{>0}^d\to \R_{>0}$ be such that $\lim_{t\to\infty} \brho(t)=\mathbf{0}$. We call $\{\mathcal{R}_{\alpha}; \beta\}$  ubiquitous in $U$ with respect to $\brho $ and $(u_t)_t$ if there exists a constant $k_0>0$ such that for any ball $B\subset U$    \begin{equation}\label{eq: measure large of ubi}
        \mu_d\left(B\cap \bigcup_{\alpha\in J_t}\tDelta(\mathcal{R}_{\alpha}, \brho(u_t))\right)\geq k_0\mu_d(B)\qquad\text{for all } t\geq t_0(B).
    \end{equation}
\end{definition}

Let 
\begin{equation}\label{bPsi}
\bPsi := (\Psi_i)_{i=1}^d : \R_{>0} \to \R_{>0}^d
\end{equation}
and
$$\Lambda_{\mathcal{R}}(\bPsi)=\{\bx\in U~|~\bx\in \tDelta(\mathcal{R}_{\alpha} \bPsi(\beta_{\alpha})) \text{ for infinitely many }\alpha\in J\}. $$
Now we recall Theorem 2.5 form \cite{KWang23}.

\begin{thkw}\label{thm:abstract}
Suppose that $\{\mathcal{R}_{\alpha}; \beta\}$ is ubiquitous in $U$ with respect to $\brho$ and $\{u_t\}$. Suppose that a map \eqref{bPsi}
is given, each $\Psi_i$ is monotonic and $\Psi_i(u_t) \leq \rho_i(u_t)$ for all sufficiently large $t$. Further suppose that there exists $0 < \lambda < 1$ such that
$$
\Psi_i(u_{t+1}) \leq \lambda \Psi_i(u_t)
$$
for all sufficiently large $t$ and all $1 \leq i \leq d$. Then
\begin{equation}\label{eq:ubiq}
\mu_d\bigl(\Lambda_{\mathcal{R}}(\bPsi)\bigr) = \mu_d(U)
\qquad \text{if} \quad
\sum_{t \geq 1} \prod_{i=1}^d \frac{\Psi_i(u_t)}{\rho_i(u_t)} = \infty.
\end{equation}
\end{thkw}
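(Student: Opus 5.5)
We plan to follow the classical ubiquity scheme (Beresnevich--Dickinson--Velani), adapted to rectangles. The target is a local statement: for every ball $B\subset U$,
$$\mu_d\bigl(\Lambda_{\mathcal R}(\bPsi)\cap B\bigr)\ge c\,\mu_d(B)$$
with $c>0$ independent of $B$. Full measure then follows from the Lebesgue density theorem, since the complement of $\Lambda_{\mathcal R}(\bPsi)$ would otherwise have a density point.

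\textbf{Step 1: the sets $A_t$.} Fix $B$ and, for $t\ge t_0(B)$, set
$$A_t:=B\cap\bigcup_{\alpha\in J_t}\tDelta(\mathcal R_\alpha,\bPsi(u_t)).$$
By monotonicity of $\Psi_i$ (the relevant case is non-increasing) and $\beta_\alpha\le u_t$ for $\alpha\in J_t$, we have $\tDelta(\mathcal R_\alpha,\bPsi(u_t))\subset\tDelta(\mathcal R_\alpha,\bPsi(\beta_\alpha))$. Hence $\limsup A_t\subset\Lambda_{\mathcal R}(\bPsi)$.

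\textbf{Step 2: lower bound on $\mu_d(A_t)$.}
\begin{itemize}
\item Apply \eqref{eq: measure large of ubi} to get $\mu_d\bigl(B\cap\bigcup_{\alpha\in J_t}\tDelta(\mathcal R_\alpha,\brho(u_t))\bigr)\ge k_0\mu_d(B)$.
\item Use a Vitali-type covering lemma for homothetic rectangles (all rectangles at level $t$ have the same side vector $\brho(u_t)$, so the $5r$-covering argument applies coordinatewise). This extracts a subfamily $\alpha\in J_t'$ whose rectangles $\tDelta(\mathcal R_\alpha,\brho(u_t))$ are pairwise disjoint and number $\gg\mu_d(B)/\prod_i\rho_i(u_t)$.
\item Shrink each to $\bPsi(u_t)\le\brho(u_t)$. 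The shrunk rectangles stay disjoint, which gives
$$\mu_d(A_t)\ge\mu_d(A_t')\gg\mu_d(B)\prod_{i=1}^d\frac{\Psi_i(u_t)}{\rho_i(u_t)},$$
where $A_t'$ is the union over $J_t'$.
\end{itemize}
By hypothesis, $\sum_t\mu_d(A_t')=\infty$.

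\textbf{Step 3: quasi-independence on average.} We aim to show, for $s<t$,
$$\mu_d(A_s'\cap A_t')\ll\frac{\mu_d(A_s')\,\mu_d(A_t')}{\mu_d(B)}+\mu_d(A_t')\,\epsilon_{s,t},$$
with $\sum_{s<t\le N}\epsilon_{s,t}$ negligible against $\bigl(\sum_{t\le N}\mu_d(A_t')\bigr)^2$. Fix a rectangle $R=\tDelta(\mathcal R_\alpha,\bPsi(u_s))$ from level $s$. The level-$t$ rectangles are disjoint with sides $\rho_i(u_t)\ge\Psi_i(u_t)$. When $\rho_i(u_t)$ is small compared with $\Psi_i(u_s)$ in every coordinate, a counting argument bounds the number of level-$t$ rectangles meeting $R$ by $\ll\mu_d(R)/\prod_i\rho_i(u_t)$. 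Summing over these gives the main term. The geometric decay $\Psi_i(u_{t+1})\le\lambda\Psi_i(u_s)$, combined with $\Psi_i\le\rho_i$, ensures that $\Psi_i(u_t)\le\lambda^{t-s}\Psi_i(u_s)$. The remaining case, where $\rho_i(u_t)\gtrsim\Psi_i(u_s)$ for some $i$, contributes at most $\mu_d(A_t')$ times a constant. We expect this case to be controlled because only boundedly many consecutive $s$ are affected; if that fails, we pass to a sparse subsequence of $t$ by grouping blocks.

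\textbf{Step 4: conclusion.} The Chung--Erdős / Kochen--Stone inequality then yields $\mu_d(\limsup A_t')\ge c\,\mu_d(B)$, and Lebesgue density finishes the proof.

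\textbf{Main obstacle.} We expect the main difficulty in Step 3. The counting argument needs only upper bounds (disjointness), which is fine. The problem is the boundary regime where $\rho_i(u_t)$ is comparable to $\Psi_i(u_s)$ in some coordinates but not in others. There the eccentricity of the rectangles prevents the use of balls, and the ubiquity threshold $t_0$ cannot be invoked inside $R$, since it depends on the ball. The first remedy to try is to avoid any ubiquity lower bound inside $R$, using only upper bounds as in Step 3, and to restrict to a lacunary subsequence of levels. The divergence hypothesis should be preserved on such a subsequence, using the $\lambda$-decay together with a Cauchy-condensation-type argument.
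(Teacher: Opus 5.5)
\textbf{Verdict: genuine gap in Step~3.} The paper gives no proof of Theorem~KW; it is quoted from Kleinbock and Wang, so your argument has to stand on its own.

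\textbf{What works.} Steps~1, 2 and~4 are the standard ubiquity scheme and are fine, up to routine care. Apply the ubiquity hypothesis to $\tfrac12 B$ and take $t$ large, so that the shrunk rectangles of your disjoint subfamily lie in $B$. Disjointness of the $\brho(u_t)$-rectangles also gives the matching upper bound on their number, so $\mu_d(A_t')\asymp\mu_d(B)\prod_i\Psi_i(u_t)/\rho_i(u_t)$.

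\textbf{Where it fails.} The problem is the regime $\Psi_i(u_s)<\rho_i(u_t)$ for some $i$, and neither of your proposed controls works there.
\begin{itemize}
\item Bounding the contribution of a pair $(s,t)$ by $\ll\mu_d(A_t')$ would need something like $\rho_i(u_t)\lesssim\rho_i(u_s)$, which is not assumed.
\item Even granting it, for fixed $t$ the number of $s<t$ in this regime is of order $\log(\rho_i(u_t)/\Psi_i(u_t))/\log(1/\lambda)$, which is unbounded. For instance, if $\prod_i\Psi_i(u_t)/\rho_i(u_t)\asymp(t\log t)^{-1}$, your bound for the error is of size $\sum_{t\le N}\mu_d(A_t')\log t\asymp\log N$. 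Meanwhile $\bigl(\sum_{t\le N}\mu_d(A_t')\bigr)^2\asymp(\log\log N)^2$, so the error swamps the main term.
\item Passing to a subsequence with gaps of that size destroys divergence, so "grouping blocks" does not rescue this either.
\end{itemize}
The issue already occurs for $d=1$, so it has nothing to do with eccentricity.

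\textbf{The missing idea.} The count factorises over coordinates, and the error should be charged to the earlier level $s$, weighted by $\lambda^{t-s}$, rather than to level $t$.
\begin{itemize}
\item If $\tDelta(c',\bPsi(u_t))$ meets $R=\tDelta(c,\bPsi(u_s))$, then $\tDelta(c',\brho(u_t))\subset\tDelta(c,\bPsi(u_s)+2\brho(u_t))$.
\item Disjointness then gives at most $\prod_i(2+\Psi_i(u_s)/\rho_i(u_t))\le 3^d\prod_i\max\{1,\Psi_i(u_s)/\rho_i(u_t)\}$ such $c'$.
\item Together with $\#\{\text{level-}s\text{ rectangles}\}\ll\mu_d(B)/\prod_i\rho_i(u_s)$, this yields
\[
\mu_d(A_s'\cap A_t')\ll\mu_d(B)\prod_{i=1}^d\frac{\max\{\Psi_i(u_s),\rho_i(u_t)\}\,\Psi_i(u_t)}{\rho_i(u_s)\,\rho_i(u_t)}.
\]
\item A coordinate with $\Psi_i(u_s)\ge\rho_i(u_t)$ contributes the factor $\frac{\Psi_i(u_s)}{\rho_i(u_s)}\cdot\frac{\Psi_i(u_t)}{\rho_i(u_t)}$.
\item A coordinate with $\Psi_i(u_s)<\rho_i(u_t)$ contributes $\frac{\Psi_i(u_t)}{\rho_i(u_s)}\le\lambda^{t-s}\frac{\Psi_i(u_s)}{\rho_i(u_s)}$, by iterating the decay hypothesis.
\end{itemize}
If every coordinate is of the first kind, you get the main term. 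Otherwise, discard the factors $\Psi_i(u_t)/\rho_i(u_t)\le 1$ to get $\ll\lambda^{t-s}\mu_d(A_s')$. Hence
\[
\mu_d(A_s'\cap A_t')\ll\frac{\mu_d(A_s')\,\mu_d(A_t')}{\mu_d(B)}+\lambda^{t-s}\mu_d(A_s').
\]
Since $\sum_{s<t\le N}\lambda^{t-s}\mu_d(A_s')\le(1-\lambda)^{-1}\sum_{s\le N}\mu_d(A_s')=o\bigl((\sum_{t\le N}\mu_d(A_t'))^2\bigr)$, this is quasi-independence on average with constants independent of $B$, and Step~4 goes through. No ubiquity inside $R$, no lacunary subsequence and no monotonicity of $\brho$ are needed. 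This is exactly where the hypothesis $\Psi_i(u_{t+1})\le\lambda\Psi_i(u_t)$ enters.
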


\subsection{Further notation} 

For every choice of the following parameters
$$
0<\varepsilon_{d+j}<1~~(1\leq j\leq m),\qquad \rho_i>0~~(1\leq i\leq d),\qquad Q>0
$$ 
and a ball $B\subset \bU$, define the following sets
$$
\mathcal{R}(Q, (\varepsilon_{d+j})_{j=1}^m, B):=\left\{(q,\ba,\bb)\in \N\times \Z^{n}~|~\begin{aligned}
    &\left\vert qf_{j}(\ba/q)-b_j\right\vert<\ep_{d+j}, \quad 1\leq j\leq m,\\
    & \ba/q\in B, \quad Q/2\leq q\leq Q
\end{aligned} \right\}$$
and
\begin{equation}\label{def: Delta}
\Delta(Q, (\varepsilon_{d+j})_{j=1}^m, B, \brho):=\bigcup_{(q,\ba,\bb)\in \mathcal{R}(Q, (\varepsilon_{d+j})_{j=1}^m, B)} \tDelta(\ba/q, \brho),
\end{equation} 
where $\brho=(\rho_i)_{i=1}^d$. Also define the following matrices for later use:
\begin{equation}\label{def_u}
u(\bx):=\begin{bmatrix}
    \mathrm{I}_{m} & 0   & -\partial_1 f(\bx) & f(\bx)-  x_1\partial_1 f(\bx)\\
      0 & \mathrm{I}_{d-1} & 0 & (x_d,\dots,x_2)^T \\
      0 & 0 & 1 & x_1\\
    0 & 0  & 0 & 1
\end{bmatrix},\end{equation}

and 

\begin{equation}\label{def_u_1}
u_1(\bx)=\begin{bmatrix}
    1 & 0 & \cdots & 0 & -\partial_d f_m(\bx) & \cdots &  -\partial_1 f_m(\bx) & f_{m}(\bx)- \sum_{i=1}^d x_i\partial_i f_m(\bx)\\
    \vdots & \vdots & \vdots & \vdots & \vdots & \vdots & \vdots & \vdots \\
    0 & 0 & \cdots & 1 & -\partial_d f_1(\bx) & \cdots &  -\partial_1 f_1(\bx) & f_{1}(\bx)- \sum_{i=1}^d x_i\partial_i f_1(\bx)\\
    0 & 0 & \cdots & 0 & 1 & 0 & \cdots & x_{d}\\
     \vdots & \vdots & \vdots & \vdots & \vdots & \vdots & \vdots & \vdots \\
     0 & 0 & \cdots & 0 & 0 &  \cdots & 1 &  x_{1}\\
     0 & 0 & \cdots & 0 & 0 &  \cdots & 0 &  1    
\end{bmatrix}.\end{equation}

\section{Lower bound and divergence: the special case}\label{sec: div}

The purpose of this section is to prove the divergence case of Theorem~\ref{thm: dream} for nondegenerate maps of the form \eqref{eq: F}.
This will be done by applying Theorem~KW stated in \S\ref{sec: abstract}. Hence our key goal of this section is to construct a suitable ubiquitous system.

\subsection{Setting off}

First, we define some relevant parameters and matrices. Let $c>0$, $\bve=(\ve_1,\dots,\ve_n)\in\Rp^n$, $ Q>0$ and define the following diagonal matrix
\begin{equation}\label{def: g}
    g=g(c,Q, \bve,\bve'):=c^{-1}\diag(\varepsilon_n,\dots, \varepsilon_{d+1}, \varepsilon_d', \dots, \varepsilon_1', c^{n+1} Q), 
\end{equation} 
where $\bve'=(\ve'_1,\dots,\ve'_d)\in\Rp^d$ satisfies 
\begin{equation}\label{eq: ep'}
    \varepsilon'_1\cdots \varepsilon'_d= (\varepsilon_{d+1}\cdots \varepsilon_{n}Q)^{-1}.
\end{equation}
For the rest of the section we will assume that 
\begin{equation}\label{eq: chain}
    0<\varepsilon_1\leq \varepsilon_{2}\leq \cdots\leq \varepsilon_{n}\le 1.
\end{equation}
Define the set 
\begin{equation}\label{def: mathcal{G}}
\mathcal{G}=\mathcal{G}(c,Q, \bve,\bve'):=\{\bx\in \bU: \lambda_{n+1}(g^{-1} u(\bx)\Z^{n+1})\leq c^{-n}\},
\end{equation} 
where $u(\bx)$ is defined in \eqref{def_u}. Note that in $u(\bx)$, only one directional derivative is used. 

The choice of $\bve'$ which will determine the ubiquity function $\brho$ satisfying \eqref{eq: measure large of ubi} on the set of rational points lying appropriately close to a given manifold. With this in mind, we now gather together three conditions that we will need to meet:
\begin{equation}\label{Goal1}
\ep_{i}'\geq \ep_i\qquad (1\leq i\leq d);
\end{equation}
\begin{equation}\label{Goal2}
       \varepsilon_{d+1}>\frac{1}{Q}\max_{1\leq i, i'\leq d}\varepsilon_i' \varepsilon'_{i'};
\end{equation}
\begin{equation}
\label{Goal3}
    Q^{-n}< \frac{\max_{i=1}^d K_i}{\max_{i=1}^n T_i} T_1\cdots T_n,
\end{equation} 
where parameters $K_i$ and $T_i$ are given by \eqref{def: KT} and dependent on $\bve'$.
We will explain in Remark~\ref{goal} below the relevance of these conditions to our proof.

\subsection{Measure bound}\label{sec: modi_BKM}
The goal of this section is to show that the set $\mathcal{G}$, where $\mathcal{G}$ is given by \eqref{def: mathcal{G}}, is relatively large (equivalently its complement is relatively small). To begin with, using Lemma \ref{dual lemma}, we get that 
$$\begin{aligned}
    \bU\setminus \mathcal{G}\subset\{\bx\in \bU~|~\lambda_1((g^{-1})^\star u^\star(\bx)\Z^{n+1})\ll_{n} c^n\},
\end{aligned}
$$
Note that 
\begin{equation}\label{eqn:dual u_1(x)}
        u^\star(\bx)=\begin{bmatrix}
            1 & -\bx & -f(\bx)\\
            0 & \be_1 & \partial_1f(\bx)\\
            0 & 0   & \mathrm{I}_{n-1}
        \end{bmatrix}\,,
    \end{equation} 
where $\be_1=(1,0,\dots,0)\in \R^d.$ Also, note that 
$$(g^{-1})^{\star}=\sigma g\sigma=c^{-1}\diag(c^{n+1}Q,\varepsilon_1', \cdots, \varepsilon_d', \varepsilon_{d+1},\cdots, \varepsilon_{n}).
$$
Using \eqref{eq: chain}, we get that $$\{\bx\in \bU~|~\lambda_1((g^{-1})^\star u^\star(\bx)\Z^{n+1})\ll_{n} c^n\}
\subset \mathfrak{S}_{F}(Q^{-1}, K_1,\cdots,K_d, T_1,\cdots,T_n),
$$
where $\mathfrak{S}_{F}(\cdots)$ is defined in \S\ref{new BKM}  and  \begin{equation}\label{def: KT}
\begin{aligned}
&K_1\asymp_{n}c^{n+1}\varepsilon_1'^{-1},\\ 
& T_i \asymp_{n}c^{n+1}\varepsilon_i'^{-1}\hspace{15ex} (2\leq i\leq d), \\
& T_{d+j} \asymp_{n}c^{n+1}\varepsilon_{d+j}^{-1}\hspace*{12ex} (1\leq j\leq m),\\
& T_1 \asymp_{n, F}c^{n+1}\varepsilon_1'^{-1}+ c^{n+1}m\varepsilon_{d+1}^{-1}, \\
& K_i\asymp_{n} T_i+ c^{n+1}\ep_{d+1}^{-1}\hspace*{9ex} (2\leq i\leq d).
\end{aligned}
\end{equation}
Observe that 
\begin{equation}\label{eq3.9}
\max_{i=1}^n T_i\asymp_{n, F}\max_{i=1}^d\{\ep_i'^{-1}, \ep_{d+1}^{-1}\}.
\end{equation}
Thus, assuming \eqref{Goal3} holds, on applying Theorem~\ref{BKM_new2} we obtain that the $\mu_d$ measure of 
$$
\mathfrak{S}_{F}(Q^{-1}, K_1,\cdots,K_d, T_1,\cdots,T_n)$$ intersected with any ball $B\subset B_0$ (where $B_0$ is as in Theorem~\ref{BKM_new2}) is bounded above by
\begin{equation}\label{eq: measure bound}
\begin{aligned}
    &\underbrace{(Q^{-1}) (c^{d(n+1)} \prod_{i=1}^d\varepsilon_i'^{-1} ) c^{m(n+1)}\prod_{j=1}^m\varepsilon_{d+j}^{-1} \mu_d(B)}_{M_1}\\
    &+\underbrace{E \left(c^{(n+1)n}Q^{-1}r^{-1}(B)  \frac{(\varepsilon_1'^{-1}+ \varepsilon_{d+1}^{-1})}{\max_{i=1}^d\{\varepsilon_i'^{-1}+ \varepsilon_{d+1}^{-1}\}} \prod_{i=2}^d\varepsilon_i'^{-1}\prod_{j=1}^m\varepsilon_{d+j}^{-1}\right)^{\alpha}}_{M_2},
\end{aligned}
\end{equation}
upto some constant that depends on $l,m,d, F$, 
where $E>0, \alpha$ are the constants as in Theorem~\ref{BKM_new2}. Here we used that $\varepsilon_{d+1}^{-1}\geq\ep_{d+j}^{-1}$ implied by \eqref{eq: chain}, so that $\max_{i=1}^n T_i=\max_{i=1}^d T_i$. For the rest of this section $E, \alpha$ are as in Theorem~\ref{BKM_new2}.

Using \eqref{eq: ep'}, we obtain that
\begin{equation}\label{eq: M_1}
    M_1\leq c^{n(n+1)}\mu_d(B).
\end{equation}
From now on, given $\bve=(\ep_i)_{i=1}^n$ satisfying \eqref{eq: chain}, we choose \begin{equation}\label{eq: simple ep'}
    \begin{aligned}
    \ep_1'= \frac{\ep_1}{\ep_1\cdots\ep_n Q},\qquad\text{and}\qquad\ep_i'=\ep_i\quad (2\leq i\leq d).
    \end{aligned}
\end{equation}

\begin{remark}\label{rem:goal 1}
Note that \eqref{Goal1} is satisfied with the above choice of $\ep_i'$ if and only if
\begin{equation}\label{eq: standing}
    \ep_1\cdots\ep_nQ\le 1.
\end{equation}
\end{remark}

We now arrive at the key proposition of this subsection:

\begin{proposition}\label{prop: crucial in Case 2} Let $F$ be $l$-nondegenerate at $\bx_0$ as defined in \eqref{eq: F}. Then there exists a ball $B_0\subset \bU$ centered at $\bx_0$ and $0<c<1$ such that for any ball $B\subset B_0$, $0<s<\frac{1}{n^2}$, there exists $Q_0=Q_0(B,F, n, d, s)$ such that for all $Q> Q_0,$ for any $\bve=(\ve_i)_{i=1}^n$ satisfying \eqref{eq: chain} and \begin{equation}\label{eq: standing 2}
     \ep_2\cdots\ep_{n}Q>Q^{s},
     \end{equation}
we have that $$\mu_d((\bU\setminus \mathcal{G})\cap B)\leq  \frac{1}{2^{2d}}\mu_d(B),
$$ 
where $\mathcal{G}=\mathcal{G}(c,Q, \bve,\bve')$ is given by \eqref{def: mathcal{G}} with $\bve'=(\ep_i')_{i=1}^d$ as in \eqref{eq: simple ep'}.
\end{proposition}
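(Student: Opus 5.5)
The strategy is to use the reduction already carried out in \S\ref{sec: modi_BKM}. By Lemma~\ref{dual lemma} and the inclusion derived there, $(\bU\setminus\mathcal{G})\cap B$ lies inside $\mathfrak{S}_{F}(Q^{-1},K_1,\dots,K_d,T_1,\dots,T_n)\cap B$, with $K_i,T_i$ given by \eqref{def: KT}. So once the hypotheses \eqref{eqn5.2} of Theorem~\ref{BKM_new2} are checked, in particular \eqref{Goal3}, the measure is bounded by $C(M_1+M_2)$ as in \eqref{eq: measure bound}. Here $B_0$ and $E$ are the ball and constant from Theorem~\ref{BKM_new2}, and $C$ depends only on $l,m,d,F$. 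It then suffices to make $CM_1\le 2^{-2d-1}\mu_d(B)$ and $CM_2\le 2^{-2d-1}\mu_d(B)$.

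\emph{Bounding $M_1$.} By \eqref{eq: M_1}, $CM_1\le Cc^{n(n+1)}\mu_d(B)$. We fix $c\in(0,1)$ so small that $Cc^{n(n+1)}\le 2^{-2d-1}$. This choice depends only on $F,n,d$, as the statement requires.

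\emph{Checking \eqref{Goal3}.} With the choice \eqref{eq: simple ep'}, \eqref{def: KT} gives $T_1\cdots T_n\gg c^{n(n+1)}\prod_i\ve_i'^{-1}\prod_j\ve_{d+j}^{-1}=c^{n(n+1)}Q$, using \eqref{eq: ep'}. Together with $K\ge K_1\asymp c^{n+1}\ve_1'^{-1}$ and \eqref{eq3.9}, the right-hand side of \eqref{Goal3} is at least a constant times $c^{O(1)}Q\,\ve_1'^{-1}/\max_i\{\ve_i'^{-1},\ve_{d+1}^{-1}\}$. By \eqref{eq: chain}, $\ve_1'\le\ve_i'$ for $i\ge2$, so this ratio is $\gg c^{O(1)}Q\min\{1,\ve_{d+1}/\ve_1'\}$. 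We also have $\ve_{d+1}/\ve_1'=\ve_{d+1}\ve_2\cdots\ve_nQ\ge \ve_{d+1}Q^{s}$. I expect this to dominate $Q^{-n}$ for large $Q$, since $\ve_{d+1}\ge(\ve_2\cdots\ve_n)^{1/(n-1)}\ge Q^{(s-1)/(n-1)}$. The exponent is then above $-n$, so \eqref{Goal3} holds for $Q>Q_0$.

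\emph{Bounding $M_2$ (main obstacle).} The inner quantity is $c^{n(n+1)}Q^{-1}r^{-1}$ times
\[
\frac{\ve_1'^{-1}+\ve_{d+1}^{-1}}{\max_i\{\ve_i'^{-1}+\ve_{d+1}^{-1}\}}\prod_{i\ge2}\ve_i^{-1}\prod_j\ve_{d+j}^{-1}.
\]
The product equals $(\ve_2\cdots\ve_n)^{-1}$. The fraction is at most $1$ always, so $M_2\le E\,(c^{n(n+1)}r^{-1}Q^{-1}(\ve_2\cdots\ve_n)^{-1})^\alpha$. By \eqref{eq: standing 2}, $Q^{-1}(\ve_2\cdots\ve_n)^{-1}<Q^{-s}$, hence $M_2\le E(r^{-1}Q^{-s})^{\alpha}$. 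Since $\mu_d(B)\asymp r^d$ is fixed, we pick $Q_0(B,F,n,d,s)$ so that $CE(r^{-1}Q^{-s})^\alpha\le2^{-2d-1}\mu_d(B)$ for $Q>Q_0$. This is where the exponent bookkeeping of Theorem~\ref{BKM_new2} enters. Whether the fraction needs care depends on whether the $\max$ in the denominator is taken over $i\ge1$; it is, so the bound $\le1$ is legitimate. The remaining subtle points are verifying the remaining conditions $\delta=Q^{-1}\le1$ and $T_i\ge1$ of \eqref{eqn5.2} (fine for small $c$ after adjusting implied constants, or by replacing $T_i$ with $\max\{T_i,1\}$) and \eqref{Goal3}. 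Summing the two bounds gives the claim.
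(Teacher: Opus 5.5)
Your proposal follows the paper's proof. You reduce to $\mathfrak{S}_F(Q^{-1},\bK,\bT)$ via Lemma~\ref{dual lemma} and make $M_1$ small by fixing $c$ in terms of $n,d,F$ only. For $M_2$ you discard the fraction, which is at most $1$, and use $\prod_{i\ge2}\ve_i'^{-1}\prod_j\ve_{d+j}^{-1}=(\ve_2\cdots\ve_n)^{-1}$ together with \eqref{eq: standing 2} to get $M_2\ll E(r^{-1}Q^{-s})^{\alpha}$; then you enlarge $Q_0$.

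The step to redo is your check of \eqref{Goal3}, which rests on two false claims.
\begin{itemize}
\item The chain condition does not give $\ve_1'\le\ve_i'$ for $i\ge2$. For $n=3$, $d=2$, $s=1/20$, $\ve_2=Q^{-9/10}$, $\ve_3=1$, all hypotheses hold, yet $\ve_1'=Q^{-1/10}>\ve_2'$. Your ratio is really $\min\{1,\ve_2/\ve_1'\}$.
\item The inequality $\ve_{d+1}\ge(\ve_2\cdots\ve_n)^{1/(n-1)}$ fails for $d=1$, since then $\ve_{d+1}=\ve_2$ is the smallest of $\ve_2,\dots,\ve_n$.
\end{itemize}
Neither claim is needed. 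Since $\ve_1'<Q^{-s}<1$, we have $K\ge K_1\gg c^{n+1}\ve_1'^{-1}\ge c^{n+1}$. Also $T_1\cdots T_n/\max_iT_i$ is a product of $n-1$ of the $T_i$, each $\gg c^{n+1}$ because every $\ve_i'$ and $\ve_{d+j}$ is at most $1$. Hence the right side of \eqref{Goal3} is $\gg c^{n(n+1)}$, which exceeds $Q^{-n}$ once $Q$ is large in terms of the already fixed $c$.

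On the requirement $T_i\ge1$: shrinking $c$ makes it worse, not better, since $T_{d+j}\asymp c^{n+1}\ve_{d+j}^{-1}$ can drop below $1$. So use your second option. Replacing each $T_i$ by $\max\{T_i,1\}$ leaves $\mathfrak{S}_F$ unchanged, because the $a_k$ are integers. For $c$ small, each new factor $\max\{T_i,1\}$ ($2\le i\le d$) or $\max\{T_{d+j},1\}$ is at most $\ve_i'^{-1}$, respectively $\ve_{d+j}^{-1}$. Bound the $i=1$ factor by $K_1\ll c^{n+1}\ve_1'^{-1}$. Then \eqref{eq: ep'} shows the first term of \eqref{eqn5.3_0} is still $\ll c^{n+1}\mu_d(B)$, which suffices for $M_1$, and your $M_2$ bound is unaffected.
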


\begin{proof}
We continue with the notation and discussion from \S \ref{sec: modi_BKM}. By the choice of $\ep_i'$ in \eqref{eq: simple ep'},
and by \eqref{eq: standing 2}, we have that $\ep_1'\leq 1$.
Also, by \eqref{eq: chain}, we get 

$$\max_{i=1}^d K_i=c^{n+1} \max \{\ep_1'^{-1}, \ep_2^{-1}\}\asymp_{n}
\max_{i=1}^n T_i.$$
The above and the fact that $T_i\geq 1$ for each $1\leq i\leq n$ verify \eqref{Goal3}. Hence we can apply Theorem~\ref{BKM_new2}, and get 
$$
M_2~~{\ll}_{F}~~ E \left( c^{(n+1)^2} r(B)^{-1} Q^{-1} \prod_{i=2}^n\ep_{i}^{-1}\right)^{\alpha},
$$ 
where $\alpha=\frac{1}{d(2l-1)(n+1)}$.
Then, using \eqref{eq: standing 2}, we obtain 
$$M_2 \ll_{n, F, B_0} (r(B)^{-1} Q^{-s})^{\alpha} c^{(n+1)n\alpha}.$$   Takings  $Q>Q_0(B, F, n, d, s)$ we get 
$$
M_2 ~\leq~ \frac{1}{2^{2d+2}}\mu_d(B).
$$
Also, by  \eqref{eq: M_1}, we choose $c<1$ small enough (only depending on $n$) such that 
$$
M_1 ~\leq~ \frac{1}{2^{2d+2}}\mu_{d}(B).$$ Combining the estimates for $M_1$ and $M_2$  completes the proof.
\end{proof}

\subsection{Projection lemma}\label{sec: projection}

The goal now is to establish the ubiquity of rational point near a given manifold projected to $\R^d$. The following statement is proved exactly the same way as in \cite[Lemma 2.1]{BVVZ21}. In what follows $V$ is an open set in $\bU\subset\R^d$, where $\bU$ be as in \S \ref{sec: parametrization}. Given $\br=(r_i)_{i=1}^d\in\Rp^d$, we define $V^{(\br)}$ as the set of points $\bx$ such that $\prod_{i=1}^d B(x_i,r_i)\subset V$. 

\begin{lemma}\label{lem: projection}
Let $V\subset \bU$. Let $Q>0, c<1,$ $\bve=(\varepsilon_i)_{i=1}^n$ satisfy \eqref{eq: chain}, $\bve'=(\ep_i')_{i=1}^d\in\Rp^d$ be as in \eqref{eq: ep'} satisfying \eqref{Goal2}, that is
\begin{equation*}
       \varepsilon_{d+1}\stackrel{\eqref{eq: chain}}{=} \min_{j=1}^m \varepsilon_{d+j}>\frac{1}{Q}\max_{1\leq i, i'\leq d}\varepsilon_i' \varepsilon'_{i'},
    \end{equation*} and
\begin{equation}\label{eq:epiep_{d+1}}
        \ep_{i}'\leq \ep_{d+1}\qquad (2\leq i\leq d).
\end{equation}
Let $r_i=\frac{\varepsilon'_i}{c^{n+1}Q}$ $(1\leq i\leq d)$. Then for any $\bx\in \mathcal{G}\cap V^{(\br)}$ there exists integer points 
    $$
    (q, \underbrace{a'_1,\cdots, a'_d}_{\ba'},\underbrace{b_1,\cdots, b_m}_{\bb})\in \Z^{n+1}
    $$ 
    such that 
    \begin{equation}\label{eq: linear_div}
    \begin{aligned}
    &\vert qx_i-a'_i\vert\ll_{n} \frac{n+1}{c^{n+1}} \varepsilon_i' \quad 1\leq i\leq d,\\
& \vert qf_{j}(\ba'/q)-b_j\vert\leq \frac{(n+1)M}{c^n}\varepsilon_{d+j} \quad (1\leq j\leq m),\\
& (n+1)Q\leq q\leq 3(n+1)Q,
    \end{aligned}     
    \end{equation}
    where $M$ is as in \eqref{eq: bound on derivatives}.
\end{lemma}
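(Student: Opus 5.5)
We argue as in \cite[Lemma 2.1]{BVVZ21}, using the matrix $u(\bx)$ from \eqref{def_u} and the defining property of $\mathcal{G}$. Fix $\bx\in\mathcal{G}\cap V^{(\br)}$. Since $\lambda_{n+1}(g^{-1}u(\bx)\Z^{n+1})\le c^{-n}$, there are $n+1$ linearly independent integer vectors $\bv_0,\dots,\bv_n$ with $\|g^{-1}u(\bx)\bv_k\|\le c^{-n}$. Write $\bv=(\bb,a'_d,\dots,a'_2,a'_1,q)$ in the coordinate ordering of $u(\bx)$. Multiplying out $g^{-1}u(\bx)$ row by row gives, for each $\bv_k$, the following bounds.

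\begin{itemize}
\item Last row: $|q|\le c^{-n}\cdot c^{n}Q=Q$.
\item Row of $x_1$: $|qx_1-a'_1|\le c^{-n}\,c\,\varepsilon'_1/c$, which is of order $c^{-n}\varepsilon_1'$.
\item Rows of $x_i$, $2\le i\le d$: $|qx_i-a'_i|\le c^{-n}\varepsilon'_i$.
\item First $m$ rows: $|b_j-a'_1\partial_1f_j(\bx)+q(f_j(\bx)-x_1\partial_1f_j(\bx))|\le c^{-n}\varepsilon_{d+j}$.
\end{itemize}

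These are the linearised versions of the targets. Note that only the $x_1$ direction appears in the derivative.

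To force $q$ into $[(n+1)Q,3(n+1)Q]$ and obtain positivity, we take a suitable integer combination of the $\bv_k$, as in \cite{BVVZ21}. Because the $\bv_k$ span $\R^{n+1}$ and $g^{-1}u(\bx)$ is invertible, we can choose real coefficients $\eta_k$ so that $\sum\eta_k g^{-1}u(\bx)\bv_k$ equals the vector with last coordinate $2(n+1)c^{n}Q\cdot c^{-1}\cdot(\text{normalisation})$ and all other coordinates zero. We then round: $\bv=\sum\lceil\eta_k\rceil\bv_k$. The rounding error has norm at most $(n+1)c^{-n}$ in every coordinate of $g^{-1}u(\bx)\bv$. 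This yields $(n+1)Q\le q\le 3(n+1)Q$, together with bounds $|qx_i-a'_i|\ll (n+1)c^{-(n+1)}\varepsilon'_i$ and the linearised bounds of size $(n+1)c^{-n}\varepsilon_{d+j}$. This is exactly the first and third lines of \eqref{eq: linear_div}.

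The main step is to pass from the linearised quantity to $|qf_j(\ba'/q)-b_j|$. Put $\bx'=\ba'/q$. Then $|x'_i-x_i|\ll\varepsilon'_i/(c^{n+1}Q)$, which is comparable to $r_i$. Since $\bx\in V^{(\br)}$ (after adjusting constants), $\bx'\in V\subset\bU$, so Taylor's theorem with \eqref{eq: bound on derivatives} applies. Expand
$$qf_j(\bx')=qf_j(\bx)+\sum_i(a'_i-qx_i)\partial_if_j(\bx)+O\Big(qM\sum_{i,i'}|x_i-x'_i||x_{i'}-x'_{i'}|\Big).$$
The $i=1$ term combines with the linearised bound.

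The quadratic error is $\ll MQ\cdot\varepsilon'_i\varepsilon'_{i'}/Q^2$, which is $<\varepsilon_{d+1}\le\varepsilon_{d+j}$ by \eqref{Goal2} and \eqref{eq: chain}. The cross terms with $2\le i\le d$ are $\ll M c^{-n}\varepsilon'_i$, and these are $\le Mc^{-n}\varepsilon_{d+1}\le Mc^{-n}\varepsilon_{d+j}$ by \eqref{eq:epiep_{d+1}}. This is exactly why that hypothesis is imposed, and it is the step where the one-directional linearisation must be compensated.

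Summing the three contributions gives $|qf_j(\ba'/q)-b_j|\le (n+1)Mc^{-n}\varepsilon_{d+j}$, up to adjusting the constant. The obstacle I expect is bookkeeping the constants so that the clean factor $(n+1)M/c^n$ emerges. If that fails, the statement with $\ll$ suffices for the later ubiquity application.
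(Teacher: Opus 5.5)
Your proposal is correct and is essentially the paper's proof. The paper also expresses $g^{-1}u(\bx)\bw$ as a real combination of the $n+1$ short lattice vectors, with $\bw$ chosen so that $u(\bx)\bw$ vanishes except in the last coordinate (i.e.\ targeting $q=2(n+1)Q$); it then rounds the coefficients, reads off $(n+1)Q\le q\le 3(n+1)Q$ and the coordinate bounds, adds back the missing terms $\partial_if_j(\bx)(qx_i-a_i')$ for $2\le i\le d$ using $\ep_i'\le\ep_{d+1}$, and bounds the quadratic Taylor error via \eqref{Goal2}. Your bound $|x_i-a_i'/q|<r_i$ in fact holds exactly, with no adjustment of constants, and your worry about the constant is shared by the paper itself: the row bounds really carry $c^{-n-1}$ and the quadratic error $c^{-2(n+1)}$, so the clean factor $(n+1)M/c^n$ is not literally obtained there either, and only a constant depending on $n$, $F$ and $c$ matters later.
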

\begin{proof}
By definition, $\lambda_{n+1}(g^{-1} u(\bx)\Z^{n+1})\leq c^{-n}$. This means there exist linearly independent vectors  $\{\bv_i\}_{i=1}^{n+1}$ of the lattice $g^{-1}u(\bx)\Z^{n+1}$ lying in the ball of radius $c^{-n}$. For any $\bw=(w_i)\in\R^{n+1}$, we can write 
$$
g^{-1}u(\bx)\bw=\sum_{i=1}^{n+1}\eta_i\bv_i.
$$ 
In particular, we choose 
$$
\begin{aligned}
&w_{n+1}=-2(n+1)Q,\\ 
&w_{m+i}=-w_{n+1}x_{d+1-i}&&(1\leq i\leq d)\\
&w_{j}=-w_{n+1}f_{m+1-j}(\bx)&&( 1\leq j\leq m).
\end{aligned}
$$
Also, set $t_i=\lfloor \eta_i\rfloor$, and let  $(\ba,q)\in\Z^{n+1}$ be such that $\sum t_i\bv_i=-g^{-1}u(\bx)(\ba,q).$ Thus we get 
$$\Vert g^{-1}u(\bx)((\ba,q)+\bw)\Vert\leq (n+1) c^{-n}.$$ 
From the last coordinate of the vector in the left hand side above, we get the last inequality in \eqref{eq: linear_div}. The bound on coordinates $(m+1)$ to $n$ of the above vector give 
$$
\vert qx_i+a_{n+1-i}\vert =\vert (q+w_{n+1})x_i+(a_{n+1-i}-w_{n+1}x_i)\vert <c^{-n-1}(n+1)\ep_i'\quad(1\leq i\leq d).
$$
This gives the first inequality in \eqref{eq: linear_div} where $a_i'=-a_{n+1-i}$ for $1\leq i\leq d$.
The bound on the first $m$ coordinates of the vector gives 
$$\begin{aligned}&\vert (q+w_{n+1})f_{j}(\bx)-\partial_{1}f_{j}(\bx)((q+w_{n+1})x_i+(a_{n+1-i}+w_{n+1-i}))+(a_{m+1-j}+w_{m+1-j})\vert\\
&\leq \frac{n+1}{c^{n+1}}\ep_{d+j}.\end{aligned}$$

After plugging the values of the coordinates of $\bw$ and by \eqref{eq:epiep_{d+1}}, \eqref{eq: chain} we get, 
$$\vert qf_{j}(\bx)-\sum_{i=1}^d \partial_{i}f_{j}(\bx)(qx_i+a_{n+1-i})+a_{m+1-j}\vert\leq \frac{n+1}{c^{n+1}}\ep_{d+j}.$$

Note from the first and the last inequality in \eqref{eq: linear_div}, we get that, since $\bx\in V^{(\br)}$, 
$$
(-a_{n+1-i}/q)_{i=1}^d\in V\subset \bU.
$$

Now using Taylor's expansion, $$\begin{aligned}&\vert q( f_j(-(a_{n+1-i}/q)_{i=1}^d)+a_{m+1-j}/q)\vert\\
& \leq \frac{n+1}{c^{n+1}}\ep_{d+j}+ M \vert q\vert \max_{1\leq i,i'\leq d}\vert x_i+a_{n+1-i}/q\vert \vert x_{i'}+a_{n+1-i'}/q\vert \\
& \leq \frac{n+1}{c^{n+1}}\ep_{d+j}+ \frac{M}{(n+1)Q}\frac{(n+1)^2}{c^{2(n+1)}}\max_{1\leq i,i'\leq d}{\ep_{i}'}{\ep_{i'}'}\\
&\leq \frac{M(n+1)}{c^{n+1}}\ep_{d+j}.
\end{aligned}$$
The last inequality follows from \eqref{Goal2}.
\end{proof}

We now arrive at the key statement of this subsection. We will use $\Delta(\cdot)$ defined in \eqref{def: Delta}.

\begin{proposition}\label{thm: ubiq main}
Let $F$ be given by \eqref{eq: F} and $l$-nondegenerate at $\bx_0$. Then, there exists a ball $B_0\subset \bU$ centered at $\bx_0$ and $\rho_0>0$ with the following property. For any open ball $B\subset B_0$, $0<s<\frac{1}{n^2}$ there exists constants $Q_0(B, F, n, d, s)$ such that for all $Q\geq Q_0$, all 
$\bve=(\ep_{i})_{i=1}^n$ satisfying \eqref{eq: chain} and \eqref{eq: standing 2},
we have that
\begin{equation}
    \mu_d(\Delta(Q, (\ep_{d+j})_{j=1}^m, B, \brho)\cap B)\geq \frac{1}{2}\mu_d(B),
\end{equation} where $\rho_i=\rho_0 \ep_{i}'/Q$, where $\ep_i'$ are defined in \eqref{eq: simple ep'}.
\end{proposition}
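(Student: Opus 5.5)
We will combine Proposition~\ref{prop: crucial in Case 2} with the projection Lemma~\ref{lem: projection}, after rescaling $Q$ and the $\ep_{d+j}$ so that the rational points produced by the lemma land in the window $Q/2\le q\le Q$ and satisfy the defining inequalities of $\mathcal{R}(Q,(\ep_{d+j})_{j=1}^m,B)$.

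\textbf{Step 1: choice of auxiliary parameters.} Let $B_0$ be the smaller of the balls given by Theorem~\ref{BKM_new2} and Proposition~\ref{prop: crucial in Case 2}, and fix $c$ from the latter. Given $Q$ and $\bve$, set $\tilde Q:=Q/(6(n+1))$ and $\tilde\ep_{d+j}:=\kappa\ep_{d+j}$ with $\kappa:=c^{n+1}/((n+1)M)$. Keep $\tilde\ep_i:=\ep_i$ for $i\le d$, possibly shrinking $\tilde\ep_1$ by a constant so that the chain condition \eqref{eq: chain} still holds. Define $\tilde\ep_i'$ by \eqref{eq: simple ep'} from $(\tilde Q,\tilde{\bve})$. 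Since the rescalings only change constants, $\tilde\ep_i'\asymp \ep_i'$ with constants depending on $n,c,M$. Moreover, \eqref{eq: standing 2} for $\bve$ with exponent $s$ gives it for $\tilde{\bve}$, $\tilde Q$ with a slightly smaller exponent $s'$ once $Q$ is large. So Proposition~\ref{prop: crucial in Case 2} applies to $B$ and gives
$$
\mu_d(\mathcal{G}\cap B)\ge (1-2^{-2d})\mu_d(B),\qquad \mathcal{G}=\mathcal{G}(c,\tilde Q,\tilde\bve,\tilde\bve').
$$

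\textbf{Step 2: checking the hypotheses of Lemma~\ref{lem: projection}.} We need \eqref{Goal2} and \eqref{eq:epiep_{d+1}}. The latter is immediate from the chain condition, since $\ep_i'=\ep_i\le\ep_{d+1}$ for $2\le i\le d$, up to the constant $\kappa$. If $\kappa$ spoils it, we rescale the $\ep_{d+j}$ only inside the output inequality rather than in the input.

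For \eqref{Goal2} the cases $i,i'\ge2$ are easy: $\ep_i\ep_{i'}/Q\le \ep_{d+1}/Q<\ep_{d+1}$. The troublesome terms are those involving $\ep_1'=(\ep_2\cdots\ep_nQ)^{-1}$. By \eqref{eq: standing 2}, $\ep_1'\le Q^{-s}\le1$, so
$$
\frac{\ep_1'^2}{Q}\le Q^{-1-2s}\qquad\text{and}\qquad \frac{\ep_1'\ep_i}{Q}\le \frac{\ep_i}{Q}.
$$
Against $\ep_{d+1}$ this is fine as long as $\ep_{d+1}\gg Q^{-1}$. This holds because $\ep_{d+1}^{m}\ge \ep_{d+1}\cdots\ep_n$ and $\ep_2\cdots\ep_nQ>Q^s$, which forces $\ep_{d+1}\cdots\ep_n>Q^{s-1}$. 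Hence $\ep_{d+1}>Q^{(s-1)/m}\ge Q^{-1}$, with room to absorb the constants for large $Q$. I expect this bookkeeping to be the main obstacle: \eqref{Goal2} must hold uniformly, with the constants from the rescaling absorbed by taking $Q_0$ large. This is where the restriction $s<1/n^2$ and $Q\ge Q_0$ are used.

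\textbf{Step 3: covering.} Let $V:=B$ and $\br$ be as in the lemma. Since $r_i\to0$ as $Q\to\infty$, we have $\mu_d(B\setminus V^{(\br)})\le 2^{-2d}\mu_d(B)$ for $Q\ge Q_0(B)$. For each $\bx\in\mathcal{G}\cap V^{(\br)}$, Lemma~\ref{lem: projection} produces $(q,\ba',\bb)$ with $\ba'/q\in B$ and $Q/2\le q\le Q$ (by the choice of $\tilde Q$). It also gives
$$
|qf_j(\ba'/q)-b_j|<\ep_{d+j}
$$
(by the choice of $\kappa$), so $(q,\ba',\bb)\in\mathcal{R}(Q,(\ep_{d+j})_{j=1}^m,B)$. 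Finally $|x_i-a_i'/q|\ll \tilde\ep_i'/(c^{n+1}Q)\le \rho_0\ep_i'/Q=\rho_i$ for a suitable constant $\rho_0$ depending only on $n,c,M$. Thus $\bx\in\tDelta(\ba'/q,\brho)$.

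Hence $\mathcal{G}\cap V^{(\br)}\subset\Delta(Q,(\ep_{d+j})_{j=1}^m,B,\brho)\cap B$, and
$$
\mu_d\bigl(\Delta(Q,(\ep_{d+j})_{j=1}^m,B,\brho)\cap B\bigr)\ge(1-2\cdot2^{-2d})\mu_d(B)\ge\tfrac12\mu_d(B).
$$
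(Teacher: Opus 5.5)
Your plan is the paper's: make $\mathcal{G}$ large by Proposition~\ref{prop: crucial in Case 2}, verify \eqref{Goal2} and \eqref{eq:epiep_{d+1}}, push $\mathcal{G}\cap B^{(\br)}$ into $\Delta$ by Lemma~\ref{lem: projection}, and discard $B\setminus B^{(\br)}$. Working at rescaled $(\tilde Q,\tilde\bve)$ so as to land exactly in $\Delta(Q,(\ep_{d+j})_{j=1}^m,B,\brho)$ is more careful than the paper, which only asserts $\mathcal{G}\cap B^{(\brho)}\subset\Delta(3(n+1)Q,(\tfrac{M(n+1)}{c^{n+1}}\ep_{d+j})_{j=1}^m,B,\brho)$. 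However, the step meant to deliver this fails as written. With $\tilde Q=Q/(6(n+1))$ the lemma gives $(n+1)\tilde Q\le q\le 3(n+1)\tilde Q$, i.e.\ $Q/6\le q\le Q/2$, not $Q/2\le q\le Q$. No other choice of $\tilde Q$ helps either: the lemma's window has endpoint ratio $3$, while $\mathcal{R}(Q,\dots)$ requires ratio $2$ (the paper's inclusion has exactly the same slack). You must reopen the proof of the lemma. The last coordinate only gives $|q+w_{n+1}|\le(n+1)\tilde Q$, and the other coordinates do not depend on $w_{n+1}$: the $w_j$ and $w_{m+i}$ are defined proportionally to it, and the corresponding terms cancel. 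So taking $w_{n+1}=-3(n+1)\tilde Q$ yields $2(n+1)\tilde Q\le q\le 4(n+1)\tilde Q$. The Taylor step and the check $\ba'/q\in V$ only improve, and $\tilde Q=Q/(4(n+1))$ then gives $Q/2\le q\le Q$.

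Two further claims are wrong, though easily repaired. First, in Step~2 the inequality $\ep_{d+1}^m\ge\ep_{d+1}\cdots\ep_n$ is backwards, since by \eqref{eq: chain} $\ep_{d+1}$ is the smallest of these factors. The resulting bound $\ep_{d+1}>Q^{(s-1)/m}$ is false in general: take $d=1$, $m\ge2$, $\ep_3=\dots=\ep_n=1$ and $\ep_2$ just above $Q^{s-1}$. What you need is the paper's one-liner $\ep_{d+1}\ge\ep_2\ge\ep_2\cdots\ep_n>Q^{s-1}$, which dominates $\ep_1'^2/Q<Q^{-1-2s}$ and $\ep_1'\ep_i/Q<Q^{-1-s}$ with room for constants. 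So \eqref{Goal2} is routine, not the main obstacle, and $s<1/n^2$ plays no role there; it is merely inherited from Proposition~\ref{prop: crucial in Case 2}. Second, in Step~1, shrinking only $\tilde\ep_1$ does not restore \eqref{eq: chain} when $d\ge2$, because the chain breaks between $\tilde\ep_d=\ep_d$ and $\tilde\ep_{d+1}=\kappa\ep_{d+1}$. Take $\tilde\bve:=\kappa\bve$ instead (with $M\ge1$, so $\kappa<1$). Then \eqref{eq: chain} and \eqref{eq:epiep_{d+1}} hold automatically, and $\tilde\ep_i'/\tilde Q\ll_{n,c,M}\ep_i'/Q$, so the loss is absorbed into $\rho_0$. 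Your remark about rescaling only in the output inequality is then not needed.
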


\begin{proof}
   Suppose we choose $0<c<1$ to be smaller than as in Proposition \ref{prop: crucial in Case 2}. By \eqref{eq: standing 2} we have that 

   $$\ep_2 Q\geq \ep_2\cdots\ep_{n} Q>Q^{s}\implies \ep_2\geq \frac{Q^{s}}{Q}.$$ Now by \eqref{eq: chain}, \eqref{eq: simple ep'} and \eqref{eq: standing 2},
$$\frac{1}{Q}\ep_1'< \frac{1}{Q^{s+1}}<\ep_2.$$
The above together with $\ep_1'\leq 1, \ep_i\leq 1$ and \eqref{eq: chain} verifies \eqref{Goal2}. Thus \eqref{Goal2} is achieved. Also, note by the choice of $\ep_i'$ in \eqref{eq: simple ep'}, \eqref{eq:epiep_{d+1}} holds. Now by Lemma \ref{lem: projection}, we get that
$$
\mathcal{G}\cap B^{(\brho)}\subset \Delta(\tilde{Q}, (\tilde\ep_{d+j})_{j=1}^m, B, \brho),
$$ 
where 
$$
\tilde Q=3(n+1)Q,\quad \tilde\varepsilon_{d+j}= \frac{M(n+1)}{c^{n+1}}\ep_{d+j}\quad (1\leq j\leq m),\quad \rho_i= \rho_0\frac{\ep_i'}{Q}\quad (1\le i\le d)
$$ 
and $\rho_0=\frac{1}{c^{n+1}}$ with $c$ depending on $n, F, d$ only. 
Note that  $$\mu_{d}(B\setminus B^{(\brho)})\ll_{d, B} \max_{i=1}^d\rho_i\stackrel{\eqref{eq: simple ep'}}{=}\rho_0\max_{i=2}^d\frac{\max\{\ep_1',\ep_{i}\}}{Q}.$$ Since by \eqref{eq: standing 2} $\ep_1'=\frac{1}{\ep_2\cdots\ep_n Q}< \frac{1}{Q^s}<1,$ we get from the above that 
$$\mu_{d}(B\setminus B^{(\brho)})\leq \frac{\rho_0}{Q}<\frac{1}{4}\mu_d(B)$$ for large enough $Q\ge Q_2(B, F, n, d).$ Since $\mathcal{G}\cap B\subset B\setminus B^{(\brho)}\cup   \Delta(\tilde{Q}, (\tilde\ep_{d+j})_{j=1}^m, B, \brho) $, the conclusion follows from Proposition \ref{prop: crucial in Case 2}. 
\end{proof}

We now complement Proposition~\ref{thm: ubiq main} with the following counting lower bound.

\begin{corollary}\label{counting lower}
Let $F$ and $\bx_0$ be as in Proposition~\ref{thm: ubiq main}. Then there exists a ball $B_0$ centered at $\bx_0$ such that for any open set $B\subset B_0$,  $0<s<\frac{1}{n^2}$ there exists $Q_0(B, n, F, d, s)$ such that for all $Q\geq Q_0$, $0<\ep_{d+j}<1$ $(1\leq j\leq m)$ with $\ep_{d+1}\leq \cdots\leq \ep_n$, and 
\begin{equation}\label{eq: standing 3}\ep_{d+1}^{d-1}\ep_{d+1}\cdots\ep_n Q> Q^{s},\end{equation} we have that 
$$\# \mathcal{R}(Q, (\varepsilon_{d+j})_{j=1}^m, B)\gg_{F} \ep_{d+1}\cdots\ep_{n} Q^{d+1}\mu_{d}(B).$$

\end{corollary}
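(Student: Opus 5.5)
The plan is to derive the corollary from Proposition~\ref{thm: ubiq main} by a volume-counting argument. Given $\ep_{d+1}\le\cdots\le\ep_n$ as in the statement, we complete them to a full vector $\bve=(\ep_i)_{i=1}^n$ by setting $\ep_1=\cdots=\ep_d:=\ep_{d+1}$. This choice clearly satisfies the chain condition \eqref{eq: chain}. Moreover,
$$
\ep_2\cdots\ep_nQ=\ep_{d+1}^{d-1}\,\ep_{d+1}\cdots\ep_nQ>Q^s
$$
by \eqref{eq: standing 3}, so \eqref{eq: standing 2} holds.

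Next we apply Proposition~\ref{thm: ubiq main} with $Q$ replaced by a fixed multiple $Q'=Q/(6(n+1))$. This rescaling is needed because the proof of that proposition produces rational points with denominators in $[(n+1)Q',3(n+1)Q']\subset[Q/2,Q]$. We also rescale the $\ep_{d+j}$ by the constant $c^{n+1}/(M(n+1))$, so that the approximations produced satisfy $|qf_j(\ba/q)-b_j|<\ep_{d+j}$. These rescalings change \eqref{eq: standing 3} only by constants, which are absorbed by taking $s$ slightly smaller and $Q_0$ larger. The output is
$$
\mu_d\Big(\bigcup_{(q,\ba,\bb)\in\mathcal{R}(Q,(\ep_{d+j}),B)}\tDelta(\ba/q,\brho)\cap B\Big)\ge\tfrac12\mu_d(B),
$$
with $\rho_i\asymp\ep_i'/Q$. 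Here $\ep_1'=\ep_1/(\ep_1\cdots\ep_nQ)$ and $\ep_i'=\ep_i$ for $i\ge2$, as in \eqref{eq: simple ep'}.

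Each rectangle $\tDelta(\ba/q,\brho)$ has measure $\asymp\prod_i\rho_i$. Using \eqref{eq: ep'}, this product is
$$
\prod_{i=1}^d\rho_i\asymp Q^{-d}\,\ep_1'\cdots\ep_d'=Q^{-d}(\ep_{d+1}\cdots\ep_nQ)^{-1}.
$$
Subadditivity then gives
$$
\tfrac12\mu_d(B)\le\#\mathcal{R}(Q,(\ep_{d+j}),B)\cdot C\,Q^{-d-1}(\ep_{d+1}\cdots\ep_n)^{-1},
$$
and rearranging yields the claimed lower bound $\#\mathcal{R}\gg\ep_{d+1}\cdots\ep_nQ^{d+1}\mu_d(B)$.

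The main point needing care is bookkeeping rather than substance. Rational points with $\ba/q$ slightly outside $B$ could contribute, but Lemma~\ref{lem: projection} already places $\ba/q\in B$ when $\bx\in B^{(\brho)}$. We must also check that the constant rescalings of $Q$ and $\ep_{d+j}$ keep every hypothesis of Proposition~\ref{thm: ubiq main} intact, including $\ep_{d+j}<1$ and the standing condition \eqref{eq: standing 2}, for all $Q\ge Q_0$. A further subtlety is that a given triple $(q,\ba,\bb)$ is counted once, while distinct $\bb$ for the same $(q,\ba)$ only help the count. The key identity making the computation close is $\prod\ep_i'=(\ep_{d+1}\cdots\ep_nQ)^{-1}$, which is exactly \eqref{eq: ep'}.
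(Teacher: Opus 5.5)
Your argument is the paper's. You complete the vector by $\ep_1=\cdots=\ep_d:=\ep_{d+1}$, note that \eqref{eq: standing 2} becomes \eqref{eq: standing 3}, and apply Proposition~\ref{thm: ubiq main}. You then bound the union of rectangles by their number times $2^d\prod_i\rho_i$, using $\prod_{i=1}^d\ep_i'=(\ep_{d+1}\cdots\ep_nQ)^{-1}$. The paper does exactly this. It takes Proposition~\ref{thm: ubiq main} as stated, already phrased in terms of $\mathcal{R}(Q,(\ep_{d+j})_{j=1}^m,B)$, so it needs no rescaling.

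The one thing to correct is the rescaling you insert, which is false as written. With $Q'=Q/(6(n+1))$ the denominators produced by Lemma~\ref{lem: projection} lie in $[Q/6,Q/2]$, not in $[Q/2,Q]$. In fact no choice of $Q'$ works: an interval $[(n+1)Q',3(n+1)Q']$ has endpoint ratio $3$ and cannot sit inside $[Q/2,Q]$, whose ratio is $2$.

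You are right that the proof of Proposition~\ref{thm: ubiq main} only delivers denominators in $[(n+1)Q,3(n+1)Q]$, with approximation errors inflated by a constant. That does not literally match $\mathcal{R}(\tilde Q,(\tilde\ep_{d+j}),B)$. But the repair is to change the shift in Lemma~\ref{lem: projection}, not merely to rescale $Q$. Taking $w_{n+1}=-4(n+1)Q'$ gives $q\in[3(n+1)Q',5(n+1)Q']$. The rest of that proof goes through verbatim, since it only uses the lower bound $q\ge(n+1)Q'$. Then $Q'=Q/(5(n+1))$ puts $q\in[3Q/5,Q]\subset[Q/2,Q]$.

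Your constant rescaling of the $\ep_{d+j}$ is fine. So is absorbing the resulting constants into a smaller $s$ and a larger $Q_0$.
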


\begin{remark}
The monotonicity condition on $(\ep_{d+j})_{j=1}^m$ in the above statement is not restrictive and is used  to write \eqref{eq: standing 3} in a clear form. Without it  condition \eqref{eq: standing 3} would have to be replaced by
   $$(\min_{j=1}^m\ep_{d+j})^{d-1} \ep_{d+1}\cdots\ep_{n}>Q^s.$$

\end{remark}
\begin{proof}
Let $\ep_{d+j}$ $(1\leq j\leq m)$ be given as in the corollary. Define $\ep_1:=\cdots=\ep_{d}:=\ep_{d+1}$. Thus $\bve=(\ep_i)_{i=1}^{n}$ satisfies \eqref{eq: chain}. With this choice of $\bve$, \eqref{eq: standing 2} is the same as \eqref{eq: standing 3}. Proposition \ref{thm: ubiq main} implies 
$$\# \mathcal{R}(Q, (\varepsilon_{d+j})_{j=1}^m, B)\frac{\prod_{i=1}^d \ep_i'}{Q^d}\gg_{F} \mu_{d}(B),$$ where $\ep_i'$ are defined in \eqref{eq: simple ep'}. The conclusion follows since $\prod_{i=1}^d \ep_i'=(\ep_{d+1}\cdots \ep_n Q)^{-1}$. 
\end{proof}

\begin{remark}\label{goal}
We want to point towards the importance and motivation of conditions \eqref{Goal1}--\eqref{Goal3} imposed on $\bve'$. Thanks to \eqref{Goal3} we can use Theorem~\ref{BKM_new2} to get Proposition~\ref{prop: crucial in Case 2}. Using \eqref{Goal2}, we apply Lemma~\ref{lem: projection} (the \textit{projection} lemma) to achieve the crucial Proposition \ref{thm: ubiq main}. For any $\bve=(\ep_i)_{i=1}^n$ satisfying \eqref{eq: chain} and \eqref{eq: standing 2} both of these conditions were established for our chosen $\bve'$. Finally  we established \eqref{Goal1} when $\bve$ also satisfy \eqref{eq: standing}, which is required to apply the ubiquity Theorem KW in \S\ref{sec: abstract}.
\end{remark}

\subsection{Ubiquity setup and limsup sets}\label{sec: finish div}

Let $\psi_1,\dots,\psi_n$ be approximation functions satisfying the divergence sum condition in Theorem~\ref{thm: dream}. Then
\begin{equation}\label{eq3.20}
\sum_{t=1}^\infty 2^t\psi_1(2^t)\cdots\psi_n(2^t)=\infty.
\end{equation}
As explained in \S \ref{sec: con on psi div}, we assume $\psi_i$ satisfy \eqref{eq: psi chain}, without loss of generality.

Fix any $0<s'<\frac{1}{n-1}$. Then, by \eqref{eq3.20}, there exists a strictly increasing sequence of positive integers $(t_k)_{k\in\N}$ such that
\begin{equation}\label{eq:lower bound on prod} 
\psi_{1}(q)\cdots \psi_{n}(q) q>q^{-\left(\frac{1}{n-1}-s'\right)}\qquad\text{for each } q=2^{t_k}
\end{equation}
and 
\begin{equation}\label{eq3.22}
\sum_{k=1}^\infty 2^{t_k}\psi_1(2^{t_k})\cdots \psi_n(2^{t_k})=\infty.
\end{equation}
In particular, we have that for all $k\in\N$
\begin{equation}\label{eq: standing 2 in 2^t}
2^{t_k}\psi_{2}(2^{t_k})\cdots \psi_n(2^{t_k})>2^{st_k},\qquad s:=\frac{n-1}{n}s'.
\end{equation}
Define $\mathcal{T}:=\{t_k:k\in\N\}$, which is thus an infinite collection. Split $\mathcal{T}$ into two sub-collections:
$$\mathcal{T}_1:=\{t\in\mathcal{T}: 
2^{t}\psi_{1}(2^{t})\cdots \psi_n(2^{t})<1\}\qquad\text{and}\qquad 
\mathcal{T}_2=\mathcal{T}\setminus \mathcal{T}_1.
$$

Define $$J:=\left\{(q,\ba)\in\N\times \Z^d ~|~  \frac{\ba}{q}\in \bU, \left\vert qf_{j}\left (\frac{\ba}{q}\right)-b_j\right\vert <\frac{1}{2}\psi_{d+j}(q) \text{ for some } b_j\in \Z, 1\leq j\leq m\right\}.$$
Also, we define two sub-collections of $J$,
$(J_{i})_{i=1}^2$, that correspond to $\mathcal{T}_i$. We say that
$(q,\ba)\in J_{i}$ if $2^{t-1}\leq q\leq 2^t$ for some $t\in \mathcal{T}_{i}.$ 
Observe that 
$$J_1\cup J_2\subset J.$$
For $\alpha=(q,\ba)\in J$, define $\beta_{\alpha}:=q$ and $\mathcal{R}_{\alpha}:=\vv a/q$.
Also given $\bPsi=(\Psi_i)_{i=1}^d:\Rp^d\to\Rp$ let 
$$\Lambda^{\tau}_{\mathcal{R}}(\bPsi)=\{\bx\in U~|~\bx\in \tDelta(\mathcal{R}_{\alpha}, \bPsi(\beta_{\alpha})) \text{ for infinitely many }\alpha\in J_{\tau}\}\qquad( \tau=1,2),
$$ 
where $\tDelta$ is given by \eqref{Delta1}.

\begin{lemma}\label{lem: ubiqui is enough}
Let $\Psi_i(q)=\frac{\psi_i(q)}{2q}$ $(1\leq i\leq d)$, and $\beta, J$ as before. Then \begin{equation}\label{eq: enough}\Lambda_{\mathcal{R}}^{1}(\bPsi)\cup \Lambda_{\mathcal{R}}^{2}(\bPsi)\subset F^{-1}(\mathcal{S}_n(\psi_1,\cdots, \psi_n)).\end{equation}
\end{lemma}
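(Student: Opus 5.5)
This is essentially a direct unpacking of the definitions together with a first-order Taylor expansion of $f$. We fix $\tau\in\{1,2\}$ and a point $\bx\in\Lambda^{\tau}_{\mathcal{R}}(\bPsi)$. Then there are infinitely many $\alpha=(q,\ba)\in J_\tau\subset J$ with $\bx\in\tDelta(\ba/q,\bPsi(q))$, that is,
\[
|x_i-a_i/q|<\frac{\psi_i(q)}{2q}\qquad (1\le i\le d).
\]
Since $\beta_\alpha=q$ and only finitely many $\alpha$ have bounded $\beta_\alpha$ (each $q$ admits finitely many $\ba$ with $\ba/q$ in the bounded set $\bU$), the values of $q$ occurring are unbounded. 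For each such $\alpha$, membership in $J$ supplies integers $b_j$ with $|qf_j(\ba/q)-b_j|<\tfrac12\psi_{d+j}(q)$. The goal is to show that the integer vector $(q,\ba,\bb)$ satisfies \eqref{eq1.1} at the point $F(\bx)=(\bx,f(\bx))$ for all large $q$.

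The first $d$ coordinates are immediate: $|qx_i-a_i|<\tfrac12\psi_i(q)<\psi_i(q)$. For the remaining $m$ coordinates I would apply Taylor's formula with the Hessian bound \eqref{eq: bound on derivatives}, shrinking $\bU$ to a convex ball if necessary. This gives
\[
|qf_j(\bx)-b_j|\le |qf_j(\ba/q)-b_j|+q\,|\nabla f_j(\ba/q)\cdot(\bx-\ba/q)|+O\Big(q\max_i|x_i-a_i/q|^2\Big).
\]
The quadratic term is $O(q\cdot q^{-2})=O(q^{-1})$, and the first term is $<\tfrac12\psi_{d+j}(q)$.

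The main obstacle is the linear term. It is bounded by $\ll\max_{i\le d}\psi_i(q)$, and by the chain condition \eqref{eq: psi chain} this is $\le\psi_d(q)\le\psi_{d+j}(q)$. However, it is only bounded by a \emph{constant multiple} of $\psi_{d+j}(q)$, not by $\tfrac12\psi_{d+j}(q)$. I would handle this in one of two ways. The first is to note that the point $F(\bx)$ still lies in $\cS_n(\psi_1,\dots,\psi_d,C\psi_{d+1},\dots,C\psi_n)$ for a constant $C$, and that the divergence argument is insensitive to such constants: one would run the whole construction with $\psi_{d+j}/C$ in place of $\psi_{d+j}$ in the definition of $J$, which changes the divergent sum only by a constant. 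The second, more faithful to the linearisation used in Lemma~\ref{lem: projection}, is to take $\Psi_i$ with a small constant factor, or to observe that the linear term is exactly what the ubiquity construction controls.

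Concretely, I would present the argument with the constant absorbed: I would replace $\tfrac12$ in the definition of $J$ by a sufficiently small constant depending on $M$ and $\sup|\nabla f|$ (or, equivalently, the $2$ in $\Psi_i$ by a large constant). This keeps the total error below $\psi_{d+j}(q)$. The $O(q^{-1})$ term is then dominated too, using $\psi_{d+j}(q)\ge\psi_1(q)$ together with the lower bound \eqref{eq:lower bound on prod}, which gives $\psi_{d+j}(q)\gg q^{-1/n}\cdot q^{-\text{small}}\gg q^{-1}$ along the relevant $q=2^{t}$, $t\in\mathcal{T}$, up to the factor-of-two range $2^{t-1}\le q\le2^t$ handled by monotonicity. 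Since infinitely many distinct $q$ occur, $F(\bx)\in\cS_n(\psi_1,\dots,\psi_n)$, which proves \eqref{eq: enough}.
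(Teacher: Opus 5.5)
Your argument follows the paper's route. You unpack $\Lambda^{\tau}_{\mathcal R}(\bPsi)$ and $J$, read off the first $d$ coordinates, and bound $|qf_j(\bx)-b_j|$ by $|qf_j(\ba/q)-b_j|$ plus a mean-value error $\ll\max_{i\le d}\psi_i(q)\le\psi_{d+j}(q)$ via \eqref{eq: psi chain}. The paper uses only the first-order bound $|f_j(\bx)-f_j(\ba/q)|\le\|\bx-\ba/q\|$, after declaring ``by the mean value theorem'' that each $f_j$ is $1$-Lipschitz. That silently absorbs exactly the constant you flag, so your explicit treatment is the more careful one. Two details in it need correcting, though.

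First, the second-order expansion and the appeal to \eqref{eq:lower bound on prod} are unnecessary. If you keep the factor $\psi_i(q)$ in $|x_i-a_i/q|<\psi_i(q)/(2q)$, the quadratic remainder is $\ll q(\psi_d(q)/q)^2\le\psi_d(q)\le\psi_{d+j}(q)$. Also, your bound $\psi_{d+j}(q)\gg q^{-1/n}$ (up to a small power) does not follow from \eqref{eq:lower bound on prod}, which only gives $\psi_1\cdots\psi_n(q)>q^{-1-1/(n-1)+s'}$. What does follow, via $\psi_{d+1}^{d+1}\ge\psi_1\cdots\psi_n$, is $\psi_{d+j}(q)\gg q^{-1}$, which would suffice for your purpose.

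Second, your two ways of absorbing the constant are not equivalent. Shrinking the $\tfrac12$ in the definition of $J$, or replacing $\psi_{d+j}$ by $\psi_{d+j}/C$ only there, leaves the linear term untouched. That term has size about $\tfrac12\sup\|\nabla f_j\|_1\,\psi_d(q)$, so when $\psi_d=\psi_{d+j}$ and $\sup\|\nabla f_j\|_1\ge 2$ the resulting bound is no longer below $\psi_{d+j}(q)$. The linear term is controlled by the radii $\Psi_i$, so the fix that works is your parenthetical one: enlarge the $2$ in $\Psi_i$, or scale all of $\psi_1,\dots,\psi_n$ by one common small constant. Either choice preserves \eqref{eq: psi chain} and changes the divergent sum in Theorem~KW only by a constant factor. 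With these corrections your proof is complete.
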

\begin{proof}
    In what follows, $\tau=1,2$. By the mean value theorem, we can assume $f_j$ is Lipschitz for every $1\leq j\leq m$ with Lipschitz constant $1$. Suppose $\bx=(x_1,\cdots,x_d)\in \Lambda_{\mathcal{R}}^\tau(\bPsi)$, then there are infinitely many $(q,\ba,\bb)\in \N\times \Z^{n+1}, (q,\ba)\in J_{\tau}$ such that
$$
\begin{aligned}
&\vert x_i-a_i/q\vert<\frac{\psi_i(q)}{2q}&&&(1\le i\le d),\\
&\vert qf_j(\ba/q)-b_{j}\vert<\frac{1}{2}\psi_{d+j}(q)&&& (1\le j\le m).
\end{aligned}
$$ 
Then for $1\leq j\leq m,$
    
    $$\begin{aligned}
        \vert f_{j}(\bx)-b_j/q\vert&\leq\left\Vert \bx-\frac{\ba}{q}\right\Vert+\frac{\psi_{d+j}(q)}{2q}\\
        &\leq\max_{i=1}^d\frac{\psi_i(q)}{2q}+ \frac{1}{2q}\psi_{d+j}(q)&\stackrel{\eqref{eq: chain}}{\leq} \frac{\psi_{d+j}(q)}{q}.
    \end{aligned}$$
Since $\mathcal{T}$ is infinite, the proof is complete. 
\end{proof}

\subsection{Ubiquity and the finale}\label{final part}

From Lemma \ref{lem: ubiqui is enough}, it is enough to show that one of the sets on the left side of \eqref{eq: enough} has \textit{full} measure. In what follows $\brho=(\rho_i)_{i=1}^d\Rp^d\to\Rp$.
By the definition of $\Delta$ in \eqref{def: Delta},
$$\begin{aligned}\Delta(2^t, \tfrac12(\psi_{d+j}(2^t))_{j=1}^m, B, \brho(2^t)):=\bigcup_{(q,\ba,\bb)\in \mathcal{R}(2^t, (\psi_{d+j}(2^t))_{j=1}^m, B)}  \tDelta(\vv a/q,\brho(2^t)),
\end{aligned}
$$ 
for any ball $B\subset B_0,$ where $B_0$ be as in Proposition \ref{thm: ubiq main}. 
Now note that $$\begin{aligned}
       \bigcup_{(q,\ba,\bb)\in \mathcal{R}(2^t, (\psi_{d+j}(2^t))_{j=1}^m, B)} \tDelta(\vv a/q,\brho(2^t))\cap B
   \subset \bigcup_{2^{t-1}\leq q\leq 2^{t}, (q,\ba)\in J }\tDelta(\vv a/q,\brho(2^t))\cap B.\end{aligned}$$
By \eqref{eq3.22}, there is $i\in\{1,2\}$ such that
\begin{equation}\label{eq3.25}
\sum_{t\in \mathcal{T}_i} 2^t \psi_1(2^t)\cdots\psi_n(2^t)=\infty,
\end{equation}

\bigskip

\noindent\textbf{Case $1$:} \eqref{eq3.25} holds for $i=1$. Define 
$$
\rho_1(2^t):= \frac{\left((\psi_{2}\cdots\psi_n)(2^t) 2^t\right)^{-1}}{2^t}, \qquad \rho_i(2^t):=\frac{\psi_i(2^t)}{2^t}\quad (2\leq i\leq d)\qquad\text{for } t\in \mathcal{T}_1.
$$
For $t\in \N\setminus \mathcal{T}_1$ we set $\rho_i(2^t):=\frac{1}{2^t}.$ Now in the view \eqref{eq: standing 2 in 2^t} and \eqref{eq: psi chain}, for $t\in \mathcal{T}_1$ both \eqref{eq: standing 3} and \eqref{eq: chain} are satisfied by $(\psi_{i}(2^t))_{i=1}^{n}$, and $\rho_i(2^t)= \frac{\ep_i'}{2^t}$, where $\ep_i=\psi_i(2^t)$ $(1\leq i\leq n)$ and $\ep_i'$ as in \eqref{eq: simple ep'}. Thus, applying Proposition \ref{thm: ubiq main}, we get for $t\in \mathcal{T}_1,$
$$\mu_{d}\left(B \cap \bigcup_{2^{t-1}\leq q\leq 2^{t}, (q,\ba)\in J } \prod_{i=1}^dB\left(a_i/q, \rho_0\rho_i(2^t)\right)\right) \geq \frac{1}{2}\mu_{d}(B),$$ where $\rho_0>0$ be as in Proposition \ref{thm: ubiq main}. This shows that the system $\{\mathcal{R}_{\alpha};\beta\}$ is ubiquitous with respect to $\brho, \{u_t\}$ with $\{u_t\}$ being $\{2^t~|~t\in \mathcal{T}_1\}.$

By \eqref{eq: standing 2 in 2^t}, we have $\lim_{t\to \infty}\brho(t)=\vv0.$ Also by Remark~\ref{rem:goal 1}, for $t\in \mathcal{T}_1$, 
$$\rho_i(2^t)=\frac{\ep_i'}{2^t}\geq \frac{\ep_i}{2^t}=\frac{\psi_i(2^t)}{2^t}\geq \Psi_i(2^t).$$ 
Now since 
$$\sum_{t\in \mathcal{T}_1}\prod_{i=1}^d\frac{\Psi_i(2^t)}{\rho_i(2^t)}\stackrel{\eqref{eq: ep'}}{=} \sum_{t\in \mathcal{T}_1}2^t \psi_1(2^t)\cdots\psi_n(2^t) =\infty,$$ the required conclusion follows from Theorem KW in \S \ref{sec: abstract}.

\bigskip

\noindent\textbf{Case 2:} \eqref{eq3.25} holds for $i=2$. Then $\mathcal{T}_2$ is infinite. By definition, for any $t\in \mathcal{T}_2$,  
$$2^t\psi_1(2^t)\cdots \psi_{n}(2^t)>1.$$ Thus using Minkowski's convex body theorem, for any $\bx\in \bU$, there exists integer solution $(q, a_1,\cdots, a_{d}, b_1,\cdots, b_{m})\in \Z^{n+1}$ such that 
$$\begin{aligned}
    &\vert qx_i-a_i\vert \leq \psi_i(2^t)&&& (1\leq i\leq d), \\
    &\vert qf_{j}(\bx)-b_j\vert <\psi_{d+j}(2^t)&&& (1\leq j\leq m),\\
    & q\leq 2^t.
\end{aligned}$$
and we again arrive at the required conclusion.

\section{Weighted convergence: the special case}\label{sec: convergence}

The purpose of this section is to prove the convergence case of Theorem~\ref{thm: dream} for nondegenerate maps of the form \eqref{eq: F}. Let $F, f$ and $\bU$ be as in \eqref{eq: F}.
For $\bve=(\ep_i)_{i=1}^n$ with $0<\ep_i<1$, $t\in\N$, any $\Delta\subset \R^d$, let $R(\Delta,\bve,t)$ be the set of $(p_1,\dots,p_n,q)\in \Z^n\times \N$ such that
\begin{equation}\label{eq4.1x}
    \begin{aligned}
    &\left\vert f_{j}(\bx)-\frac{p_{d+j}}{q}\right\vert <\frac{\ep_{d+j}}{e^t}&&& (1\le j\le m),\\
    & \left\vert x_i-\frac{p_i}{q}\right\vert<\frac{\ep_i}{e^t} &&& (1\le i\le d),\\
    & 1\leq q\leq e^t.
    \end{aligned}
\end{equation}
for some $\bx\in\Delta$.
We also introduce the following counting function 
$$
N(\Delta,\bve,t):=\#R(\Delta,\bve,t).
$$

\begin{lemma}\label{lem: x to x_0}
Let $\bve=(\ep_i)_{i=1}^n$ satisfy \eqref{eq: chain}, $\bve'=(\ep_i')_{i=1}^d$ satisfy $0<\ep_i'<1$, $t\in \N$ and $\bx_0=(x_{0,i})_{i=1}^d\in \bU$. Suppose that 
\begin{equation}\label{eq4.2x}
    \bx\in \Delta=\Delta(\bx_0):=\prod_{i=1}^d B\left(x_{0,i}, \left(\frac{\ep_i'}{e^t}\right)^{1/2}\right)\subset \bU
\end{equation}
    and $(\bp,q)\in\Z^{n+1}$ satisfy \eqref{eq4.1x}.
Then 
$$\begin{aligned}
&\left\vert qf_{j}(\bx_0)-\sum_{i=1}^d \partial_i f_{j}(\bx_0)(qx_{0,i}-p_i)-p_{d+j}\right\vert \ll_{F, n}\max\{\ep_{d+j}, \max_{i=1}^d\ep_i'\}&&& (1\le j\le m),\\
    & \left\vert qx_{0,i}-p_i\right\vert<2\max\{\ep_i, (\ep_i' e^t)^{1/2}\}&&& (1\le i\le d),\\
    & 1\leq q\leq e^t.
    \end{aligned}$$
\end{lemma}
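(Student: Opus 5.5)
The plan is a direct Taylor-expansion argument, the same mechanism as the final step of the proof of Lemma~\ref{lem: projection}, now run from the point $\bx$ back to the centre $\bx_0$. Throughout, $\bx\in\Delta(\bx_0)$ is the point for which $(\bp,q)$ satisfies \eqref{eq4.1x}. We have
$$
|x_i-x_{0,i}|<\Big(\frac{\ep_i'}{e^t}\Big)^{1/2}.
$$

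\emph{The $x$-coordinates.} Write
$$
qx_{0,i}-p_i=(qx_i-p_i)+q(x_{0,i}-x_i).
$$
Since $1\le q\le e^t$, the first term is bounded by $q\ep_i/e^t\le \ep_i$. The second is bounded by $e^t(\ep_i'/e^t)^{1/2}=(\ep_i'e^t)^{1/2}$. Summing gives the bound $2\max\{\ep_i,(\ep_i'e^t)^{1/2}\}$. The condition $1\le q\le e^t$ carries over verbatim.

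\emph{The $f$-coordinates.} Fix $j$ and expand $f_j$ at $\bx_0$ to second order. The mean value form of Taylor's theorem is valid because $\Delta(\bx_0)\subset\bU$ is a box, hence convex, and \eqref{eq: bound on derivatives} applies:
$$
f_j(\bx)=f_j(\bx_0)+\sum_{i=1}^d\partial_if_j(\bx_0)(x_i-x_{0,i})+O\Big(M\max_{i,i'}|x_i-x_{0,i}||x_{i'}-x_{0,i'}|\Big).
$$
Multiply by $q$ and subtract $p_{d+j}$. The left side $qf_j(\bx)-p_{d+j}$ is at most $\ep_{d+j}$ in absolute value. The quadratic error is at most
$$
qM\max_{i,i'}\frac{(\ep_i'\ep_{i'}')^{1/2}}{e^t}\le M\max_i\ep_i'.
$$
For the linear term, replace $q(x_i-x_{0,i})$ by $(qx_i-p_i)-(qx_{0,i}-p_i)$. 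This yields exactly the expression $qf_j(\bx_0)-\sum_i\partial_if_j(\bx_0)(qx_{0,i}-p_i)-p_{d+j}$, plus the leftover $\sum_i\partial_if_j(\bx_0)(qx_i-p_i)$. The leftover is bounded by $d\cdot\sup|\nabla f_j|\cdot\max_i\ep_i$.

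\emph{The main obstacle} is absorbing this leftover into $\max\{\ep_{d+j},\max_i\ep_i'\}$. Here I will use the chain condition \eqref{eq: chain}: it gives $\ep_i\le\ep_{d+1}\le\ep_{d+j}$ for $i\le d$. Hence $\max_i\ep_i\le\ep_{d+j}$, and the leftover is $\ll_F\ep_{d+j}$.

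Collecting the three contributions gives the stated $\ll_{F,n}\max\{\ep_{d+j},\max_i\ep_i'\}$. The implied constant depends only on $M$, $d$ and bounds for the first derivatives of $f$ on $\bU$; these can be assumed bounded after shrinking $\bU$, as was done in Lemma~\ref{lem: ubiqui is enough}. No geometry-of-numbers input is needed for this lemma.
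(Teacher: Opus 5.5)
Your proposal is correct and follows essentially the same route as the paper. You use the triangle inequality with $q\le e^t$ for the $x$-coordinates, and a second-order Taylor expansion with \eqref{eq: bound on derivatives}, together with the chain condition \eqref{eq: chain}, to absorb $\sum_i\partial_i f_j(\bx_0)(qx_i-p_i)$ into $\ep_{d+j}$ and the quadratic error into $\max_i\ep_i'$. Expanding directly at $\bx_0$ also spares you the paper's extra (harmless) term $|\partial_i f_j(\bx_0)-\partial_i f_j(\bx)|\,|qx_i-p_i|$, so your bookkeeping is slightly cleaner.
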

\begin{proof}
Using the hypothesis of this lemma, we obtain that 
$$
\begin{aligned}
&\left\vert qf_{j}(\bx)-\sum_{i=1}^d \partial_i f_{j}(\bx_0)(qx_{i}-p_i)-p_{d+j}\right\vert <\max\{\ep_{d+j}, \max_{i=1}^d\ep_i\}\stackrel{\eqref{eq: chain}}{=}\ep_{d+j}&&& (1\le j\le m),\\
    & \left\vert qx_{i}-p_i\right\vert<\ep_i&&& (1\le i\le d),\\
    & 1\leq q\leq e^t.
    \end{aligned}$$
By \eqref{eq4.1x}, for each  $1\leq i\leq d$, $x_i=x_{0,i}+x_i'\left(\frac{\ep_i'}{e^t}\right)^{1/2}$ with  $\vert x_i'\vert \leq 1$. Then 
$$\begin{aligned}\vert q x_{0,i}-p_i\vert<&\vert q x_{i}-p_i\vert+ \vert q\vert \vert x_i-x_{0,i}\vert\\
    &\leq \varepsilon_i+ e^t \left(\frac{\ep_i'}{e^t}\right)^{1/2}\leq 2\max\{\ep_i, (\ep_i' e^t)^{1/2}\}.\end{aligned}$$
For $1\leq j\leq m,$
$$\begin{aligned}
&\left\vert q \fff_j(\bx_0)-\sum_{i=1}^d\partial_i\fff_j(\bx_0)(q x_{0,i}-p_i)-p_{d+j} \right\vert\\
& \ll_{F,n}\varepsilon_{d+j}+\sum_{i=1}^d\vert \partial_i\fff_j(\bx_0)-\partial_i\fff_j(\bx) \vert \vert qx_i-p_i\vert +\vert q\vert \max_{i=1}^d\left(\varepsilon'_i e^{-t} \right)\\
& \stackrel{\eqref{eq: chain}}{\ll_{F,n}}\varepsilon_{d+j}+ \max_{i=1}^d(\ep_i' e^{-t})^{1/2}\varepsilon_{d}+ \max_{i=1}^d\ep_i'\\
& \stackrel{\eqref{eq: chain}}{\ll_{F,n}}\max\{\ep_{d+j}, \max_{i=1}^d \ep_i'\}.
\end{aligned}
$$    
\end{proof}

Let $\bve=(\ep_i)_{i=1}^n$ and $\bve'=(\ep_{i}')_{i=1}^d$ be as in Lemma \ref{lem: x to x_0}. Define the following matrices:
\begin{align}
   & g_{conv}:= \phi\diag(\ep_n^{-1},\cdots,\ep_1^{-1}, e^{-t});\label{eq4.1}\\
   & a:=\diag(\underbrace{1,\cdots, 1}_{m}, \frac{1}{\ep_d^{-1} \max\{\ep_d, (\ep_d'e^t)^{1/2}\}},\cdots, \frac{1}{\ep_1^{-1} \max\{\ep_1, (\ep_1'e^t)^{1/2}\}}, 1),\label{eq4.2}
\end{align}
where 
\begin{equation}\label{eq: phi}    \phi^{n+1}:=e^t\prod_{i=1}^n \ep_i .
\end{equation}

\subsection{Choosing $\ep_i'$}

Given $\bve=(\ep_i)_{i=1}^n$ and $t\in \N$, we choose $\bve'=(\ep_{i}')_{i=1}^d$ so that
\begin{equation}\label{ep'<epd+1}
    \max_{i=1}^d \ep_i'\leq \ep_{d+1},
\end{equation}
and
\begin{equation}\label{eqn: ep' condition 2}
    \ep_i\leq (\ep_i'e^t)^{1/2}\qquad\left(\text{or equivalently } \frac{\ep_i}{e^t}\leq \left(\frac{\ep_i'}{e^t}\right)^{1/2}\right)\qquad\text{for $1\leq i\leq d$}.
\end{equation}
Define 
\begin{equation}\label{eq4.8}
\mathfrak{M}=\mathfrak{M}(\bve,\bve', t):=\{\bx\in \bU~|~\lambda_{n+1}(ag_{conv} u_1(\bx)\Z^{n+1})>\phi\}.
\end{equation}

\begin{lemma}\label{lem: counting locally on good}
Let $B$ be a ball in $\bU$. Suppose that $\bve=(\ep_i)_{i=1}^n$ satisfy \eqref{eq: chain}, $t\in\N$, and $\bve'=(\ep_i')_{i=1}^d\in(0,1)^d$ satisfy \eqref{ep'<epd+1} and \eqref{eqn: ep' condition 2}. Let $\Delta$ be given by \eqref{eq4.2x} and $\mathfrak{M}$ be given by \eqref{eq4.8}. Then for every $\bx_0$ in $B\cap \bU\setminus \mathfrak{M}$,

    $$
    N\left(\Delta\cap B, \bve, t\right)~~\ll_{F,n}~~ \ep_{d+1}\cdots\ep_n e^{t(d+1)}\left(\frac{\ep_1'\cdots\ep_d'}{e^{dt}}\right)^{1/2}.
    $$
\end{lemma}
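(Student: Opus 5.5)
The plan is to encode every element of $R(\Delta\cap B,\bve,t)$ as a short vector of the lattice $\Lambda:=a g_{conv} u_1(\bx_0)\Z^{n+1}$. We then count short vectors using the hypothesis $\bx_0\notin\mathfrak{M}$, that is, $\lambda_{n+1}(\Lambda)\le\phi$.

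\emph{Step 1 (linearization).} Fix $(\bp,q)\in R(\Delta\cap B,\bve,t)$. By definition there is $\bx\in\Delta\cap B$ for which \eqref{eq4.1x} holds, so Lemma~\ref{lem: x to x_0} applies. By \eqref{ep'<epd+1} and \eqref{eq: chain}, $\max\{\ep_{d+j},\max_i\ep_i'\}=\ep_{d+j}$. Hence the $j$-th linear form $qf_j(\bx_0)-\sum_i\partial_if_j(\bx_0)(qx_{0,i}-p_i)-p_{d+j}$ is $\ll_{F,n}\ep_{d+j}$. By \eqref{eqn: ep' condition 2}, $|qx_{0,i}-p_i|<2(\ep_i'e^t)^{1/2}$ for each $i\le d$. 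We also have $q\le e^t$.

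\emph{Step 2 (short lattice vectors).} Up to signs and the reversed ordering built into \eqref{def_u_1}, the vector $u_1(\bx_0)(\pm p_{n},\dots,\pm p_{d+1},\pm p_d,\dots,\pm p_1,q)^\top$ has the following coordinates:
\begin{itemize}
\item the first $m$ coordinates are exactly the linear forms of Step 1;
\item the next $d$ coordinates are $qx_{0,i}-p_i$;
\item the last coordinate is $q$.
\end{itemize}
I would check this sign and ordering bookkeeping carefully, since it is the only fiddly part of this step. Applying $g_{conv}$ and $a$ then gives:
\begin{itemize}
\item the first $m$ coordinates become $\phi\ep_{d+j}^{-1}\cdot O(\ep_{d+j})=O(\phi)$;
\item the middle coordinates become $\phi\ep_i^{-1}\cdot\frac{\ep_i}{(\ep_i'e^t)^{1/2}}\cdot 2(\ep_i'e^t)^{1/2}=O(\phi)$, using \eqref{eqn: ep' condition 2} to identify the entries of $a$;
\item the last coordinate becomes $\phi e^{-t}q\le\phi$.
\end{itemize}
Since $u_1$, $g_{conv}$ and $a$ are invertible, distinct $(\bp,q)$ give distinct vectors. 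Therefore $N(\Delta\cap B,\bve,t)\le\#\big(\Lambda\cap B(\mathbf 0,C\phi)\big)$ for some $C=C(F,n)$.

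\emph{Step 3 (counting).} This is the step I expect to be the main, though standard, obstacle. Since $\lambda_{n+1}(\Lambda)\le\phi$, $\Lambda$ has $n+1$ linearly independent vectors of length $\le\phi$. By the standard argument (Mahler/Cassels), $\Lambda$ therefore has a basis of vectors of length $\ll_n\phi$. The corresponding fundamental parallelepiped $P$ has diameter $\ll_n\phi$. The translates $v+P$, for $v\in\Lambda\cap B(\mathbf 0,C\phi)$, are disjoint and lie in $B(\mathbf 0,C'\phi)$. Hence
$$\#\big(\Lambda\cap B(\mathbf 0,C\phi)\big)\ll_{F,n}\frac{\phi^{n+1}}{\det\Lambda}.$$

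\emph{Step 4 (the determinant).} We have $\det u_1=1$. By \eqref{eq: phi}, $\det g_{conv}=\phi^{n+1}e^{-t}\prod_{i=1}^n\ep_i^{-1}=1$. Using \eqref{eqn: ep' condition 2}, $\det a=\prod_{i=1}^d\ep_i(\ep_i'e^t)^{-1/2}$. Therefore the count is
$$\ll \phi^{n+1}\prod_{i=1}^d\frac{(\ep_i'e^t)^{1/2}}{\ep_i}=e^t\,\ep_{d+1}\cdots\ep_n\,(\ep_1'\cdots\ep_d')^{1/2}e^{td/2}.$$
This equals $\ep_{d+1}\cdots\ep_n e^{t(d+1)}\big(\ep_1'\cdots\ep_d'/e^{dt}\big)^{1/2}$, which is the bound claimed in Lemma~\ref{lem: counting locally on good}.
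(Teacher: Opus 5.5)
Your proposal is correct and follows the paper's proof. You use Lemma~\ref{lem: x to x_0}, together with \eqref{ep'<epd+1} and \eqref{eqn: ep' condition 2}, to place the image of each $(-\bp,q)$ under $a g_{conv} u_1(\bx_0)$ in a box of side $\ll_{F,n}\phi$. You then use $\lambda_{n+1}\le\phi$ to bound the count by $\phi^{n+1}/\det a$. The only difference is that you spell out the standard lattice-point counting step (a basis of length $\ll_n \phi$ plus a volume argument), which the paper leaves implicit.
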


\begin{proof}
Let $(\bp, q)\in \Z^{n+1}$ be an integer vector that contributes to the left side above. Then by Lemma \ref{lem: x to x_0}, \eqref{ep'<epd+1} and \eqref{eqn: ep' condition 2}, 
$$
\begin{aligned}
&\left\vert qf_{j}(\bx_0)-\sum_{i=1}^d \partial_i f_{j}(\bx_0)(qx_{0,i}-p_i)-p_{d+j}\right\vert \ll_{F,n}\ep_{d+j}&&& (1\le j\le m),\\
    & \left\vert qx_{0,i}-p_i\right\vert<2(\ep_i' e^t)^{1/2}&&& (1\le i\le d),\\
    & 1\leq q\leq e^t.
    \end{aligned}$$

This implies 
$$g_{conv}u_1(\bx_0)(-\bp,q)\in [c_1\phi]^{m}\times \prod_{i=1}^d [c_1\phi\ep_i^{-1} (\ep_i'e^t)^{1/2}]\times [c_1\phi],$$ where $\phi$ is an in \eqref{eq: phi}, $c_1>0$ depends on $n, F$ only and $[x]$ denotes the interval $[-x,x]$.
By \eqref{eqn: ep' condition 2} this implies,
$$ag_{conv}u_1(\bx_0)(-\bp,q)\in  [c_2\phi]^{n+1},$$ where $c_2>0$ only depends on $n,F.$ 
Since $\bx_0\notin\mathfrak{M}$, we get that
$$
N\left(\Delta\cap B, \bve, t\right) \ll_{n,F} \phi^{n+1} \prod_{i=1}^d\frac{\max\{\ep_i, (\ep_i' e^t)^{1/2}\}}{\ep_i}.$$
By \eqref{eq: phi} and \eqref{eqn: ep' condition 2}, this is further bounded above by
$$
\ep_{d+1}\cdots\ep_n e^{t(d+1)} \left(\frac{\ep_1'\cdots\ep_d'}{e^{dt}}\right)^{1/2}
$$
as claimed.    
\end{proof}

Next, we state a lemma analogous to \cite[Lemma 5.4]{BY} that follows from the covering argument and the fact that 
$\frac{\ep_i'}{e^t}\leq \frac{1}{e^t}\to 0$ as $t\to\infty.$

\begin{lemma} Let $B$ be a ball in $\bU$. Then for sufficiently large  $t\in\N$ and every choice of $\bve$ and $\bve'$ as in Lemma \ref{lem: counting locally on good}, we have that
$$
N(B\setminus \mathfrak{M},  \bve, t) \le \left(\frac{\ep_1'\cdots\ep_d'}{e^{dt}}\right)^{-1/2} \max_{\bx_0\in B\setminus \mathfrak{M}} N\left(\Delta(\bx_0)\cap B,\bve, t\right) \mu_d(B),$$
where $\mathfrak{M}$ is given by \eqref{eq4.8} and $\Delta(\bx_0)$ is given by \eqref{eq4.2x}.
\end{lemma}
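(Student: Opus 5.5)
We will prove this by a covering argument, exactly as in \cite[Lemma 5.4]{BY}. Write $\eta_i:=(\ep_i'/e^t)^{1/2}$ for $1\le i\le d$, so that $\Delta(\bx_0)=\prod_{i=1}^d B(x_{0,i},\eta_i)$ and $\mu_d(\Delta(\bx_0))=2^d\eta_1\cdots\eta_d=2^d\big(\ep_1'\cdots\ep_d'/e^{dt}\big)^{1/2}$. The first observation is that $N(\cdot,\bve,t)$ is subadditive: a vector $(\bp,q)\in R(\Delta,\bve,t)$ only needs \eqref{eq4.1x} to hold at \emph{some} $\bx\in\Delta$. Hence, if $B\setminus\mathfrak{M}\subset\bigcup_k\Delta_k$, then
$$N(B\setminus\mathfrak{M},\bve,t)\le\sum_k N(\Delta_k\cap B,\bve,t).$$

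The plan is therefore to cover $B\setminus\mathfrak{M}$ by rectangles of the form $\Delta(\bx_0)$ with centres $\bx_0\in B\setminus\mathfrak{M}$. We will use a maximal family of centres $\bx_{0,k}\in B\setminus\mathfrak{M}$ whose half-size rectangles $\prod_i B(x_{0,k,i},\eta_i/2)$ are pairwise disjoint. By maximality, every $\bx\in B\setminus\mathfrak{M}$ is within $\eta_i$ of some centre in each coordinate, so the full-size rectangles $\Delta(\bx_{0,k})$ cover $B\setminus\mathfrak{M}$. Alternatively, one can take a grid of mesh $\eta_i$ in coordinate $i$ and keep only those grid cells meeting $B\setminus\mathfrak{M}$, recentring each at a point of $B\setminus\mathfrak{M}$. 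The key point is that the centres lie in $B\setminus\mathfrak{M}$, so Lemma \ref{lem: counting locally on good} applies to each piece and each term is at most $\max_{\bx_0\in B\setminus\mathfrak{M}}N(\Delta(\bx_0)\cap B,\bve,t)$.

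It remains to count the pieces. The disjoint half-rectangles are contained in a slight enlargement $B^+$ of $B$, since $\eta_i\le e^{-t/2}\to0$ uniformly in $\bve'$ because $\ep_i'<1$. For $t$ sufficiently large, depending only on $B$, we have $\mu_d(B^+)\le 2\mu_d(B)$. Comparing volumes, the number of pieces is at most
$$\frac{\mu_d(B^+)}{\prod_i\eta_i}\le 2\,\mu_d(B)\Big(\frac{\ep_1'\cdots\ep_d'}{e^{dt}}\Big)^{-1/2}.$$
The stated inequality has no constant, so we need to account for the factor $2$ (or $2^d$ in the grid version). The factor $2^d$ in the volume of the half-rectangles in fact cancels, giving $\#\le\mu_d(B^+)/\prod\eta_i$. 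The remaining factor $\mu_d(B^+)/\mu_d(B)\to1$ can be made at most any fixed number exceeding $1$ by taking $t$ large. If an absolute constant survives, it is harmless for the later application, and one reads the inequality up to a constant depending on $d$.

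The only mildly delicate step is this uniformity: the threshold on $t$ must not depend on $\bve,\bve'$. That is guaranteed because the side lengths $\eta_i$ are bounded by $e^{-t/2}$ regardless of the choice of $\bve$ and $\bve'$.
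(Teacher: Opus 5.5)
Your proposal is correct and follows the paper's approach. The paper only says the lemma follows, as in \cite[Lemma 5.4]{BY}, from a covering argument together with the uniform smallness $\ep_i'/e^t\le e^{-t}$ of the rectangles; your maximal disjoint family of half-rectangles, with the threshold on $t$ independent of $\bve,\bve'$, is exactly that argument. Your caveat about the constant is also well taken: covering alone gives the bound only up to a factor $1+o(1)$. Already in dimension one, points spaced slightly more than $\eta_1$ apart show the covering number can exceed $\mu_1(B)/\eta_1$. This is harmless, since the bound is only used through Proposition~\ref{prop: count on major}, which is stated with $\ll$.
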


The above two lemmas immediately give the following proposition.
\begin{proposition}\label{prop: count on major}
     Let $B$ be a ball in $\bU$. Then for sufficiently large  $t\in\N$ and every choice of $\bve$ and $\bve'$ as in Lemma \ref{lem: counting locally on good}, we have that
$$
N(B\setminus \mathfrak{M},  \bve, t) \ll \ep_{d+1}\cdots\ep_n e^{t(d+1)} \mu_d(B),
$$
where $\mathfrak{M}$ is given by \eqref{eq4.8}.
\end{proposition}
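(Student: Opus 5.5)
The plan is simply to multiply the bound of Lemma~\ref{lem: counting locally on good} by the covering factor of the preceding lemma. Fix the ball $B\subset\bU$ and take $t$ large enough for the covering lemma to apply. This needs the boxes $\Delta(\bx_0)$ of side lengths $(\ep_i'/e^t)^{1/2}\le e^{-t/2}$ to be small relative to $B$, uniformly in the admissible $\bve,\bve'$; this holds since $\ep_i'<1$. The covering lemma then gives
$$
N(B\setminus\mathfrak{M},\bve,t)\le \left(\frac{\ep_1'\cdots\ep_d'}{e^{dt}}\right)^{-1/2}\max_{\bx_0\in B\setminus\mathfrak{M}}N(\Delta(\bx_0)\cap B,\bve,t)\,\mu_d(B).
$$

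For each $\bx_0\in B\setminus\mathfrak{M}$, Lemma~\ref{lem: counting locally on good} applies, since $\bve$ satisfies \eqref{eq: chain} and $\bve'$ satisfies \eqref{ep'<epd+1} and \eqref{eqn: ep' condition 2}. It yields
$$
N(\Delta(\bx_0)\cap B,\bve,t)\ll_{F,n}\ep_{d+1}\cdots\ep_n e^{t(d+1)}\left(\frac{\ep_1'\cdots\ep_d'}{e^{dt}}\right)^{1/2},
$$
with an implied constant independent of $\bx_0$, $\bve$, $\bve'$ and $t$. The bound is therefore uniform and survives taking the maximum over $\bx_0$. Substituting it into the covering inequality, the factors $(\ep_1'\cdots\ep_d'e^{-dt})^{\pm1/2}$ cancel exactly, leaving $N(B\setminus\mathfrak{M},\bve,t)\ll\ep_{d+1}\cdots\ep_ne^{t(d+1)}\mu_d(B)$.

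The only point that needs care is the covering step itself. Cover $B\setminus\mathfrak{M}$ by boxes $\Delta(\bx_0)$ centred at points $\bx_0\in B\setminus\mathfrak{M}$ with bounded overlap, for instance by taking a maximal separated family in the box metric. The number of boxes is $\ll\mu_d(B)/\mu_d(\Delta)\asymp(\ep_1'\cdots\ep_d'e^{-dt})^{-1/2}\mu_d(B)$, and this requires the boxes to be small compared with $B$, which is exactly the role of large $t$. A vector $(\bp,q)$ counted in $N(B\setminus\mathfrak{M},\bve,t)$ is realised by some $\bx\in B\setminus\mathfrak{M}$. That $\bx$ lies in a box of the cover, hence in $\Delta(\bx_0)\cap B$ for a centre $\bx_0\in B\setminus\mathfrak{M}$, which is the configuration Lemma~\ref{lem: counting locally on good} handles. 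This step is the same argument as \cite[Lemma 5.4]{BY}, and I do not expect it to be an obstacle.
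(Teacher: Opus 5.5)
Your proposal is correct and matches the paper's argument: the paper obtains the proposition by combining the covering lemma (the analogue of \cite[Lemma 5.4]{BY}, valid for large $t$ because $\ep_i'/e^t\le e^{-t}\to 0$) with Lemma~\ref{lem: counting locally on good}, whose bound is uniform in $\bx_0$, so that the factors $(\ep_1'\cdots\ep_d'e^{-dt})^{\pm1/2}$ cancel. Your maximal-separated-family construction for the covering step spells out what the paper leaves to \cite{BY}, and writing that step with $\ll$ rather than $\le$ is, if anything, more accurate than the paper's statement of it.
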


\subsection{Dealing with $\mathfrak{M}$}\label{sec: minor is small}

As before, throughout this subsection, $t\in\N$, $\bve=(\ep_i)_{i=1}^n$ satisfies \eqref{eq: chain}, $\bve'=(\ep_i')_{i=1}^d\in(0,1)^d$ satisfies \eqref{ep'<epd+1} and \eqref{eqn: ep' condition 2}, and $\mathfrak{M}=\mathfrak{M}(\bve,\bve', t)$ is given by \eqref{eq4.8}.
Suppose $\bx_0\in \mathfrak{M}$. By definition,
$$\lambda_{n+1}(ag_{conv} u_1(\bx_0)\Z^{n+1})>\phi.$$ By Lemma \ref{dual lemma}, we have that 
\begin{equation}\label{eq: lambda1}
\lambda_1(a^\star g_{conv}^\star u_1^\star(\bx_0)\Z^{n+1})\ll_{n}\phi^{-1}.\end{equation}
Using \eqref{eqn: ep' condition 2}, we observe  that 
$$\begin{aligned} &g_{conv}^\star=\phi^{-1}\diag(e^t,\ep_1,\cdots,\ep_n),\\
& a^\star=\diag(1, \left(\frac{\ep_1'}{e^t}\right)^{1/2} \frac{e^t}{\ep_1}, \cdots, \left(\frac{\ep_d'}{e^t}\right)^{1/2} \frac{e^t}{\ep_d}, \underbrace{1,\cdots, 1}_{m}).\end{aligned}
$$
Also, for $\bx\in \bU$,
$u_1^\star(\bx)$ is given by 
\begin{equation}\label{eqn:dual u_1(x)}
        u_1^\star(\bx)=\begin{bmatrix}
            1 & -\bx & -f(\bx)\\
            0 & \mathrm{I}_d & J(\bx)\\
            0 & 0   & \mathrm{I}_m
        \end{bmatrix}\,,
    \end{equation} 
where 
$$
J(\bx)=\big[\partial_i\fff_j(\bx)\big]_{1\leq i\leq d,\, 1\leq j \leq m}\, 
$$ 
is the Jacobian of the map $\bx\mapsto f(\bx)=(f_1(\bx),\dots,f_m(\bx))$.
Then, by \eqref{eq: lambda1}, there exists $(a_0,\ba)\in\Z^{n+1}$ such that 
$$\begin{aligned}
    & \vert a_0+\ba\cdot F(\bx_0)\vert<e^{-t},\\
    & \vert\partial_iF(\bx_0)\cdot\ba\vert< (\ep_i' e^t)^{-1/2}, ~1\leq i\leq d,\\
    & \vert a_{d+j}\vert<\ep_{d+j}^{-1}, ~1\leq j \leq m.
\end{aligned}
$$
From the last two inequalities and from $F(\bx)=(\bx,f(\bx))$ and \eqref{eq: chain}, we get that 
$$\vert a_i\vert \leq (\ep_i' e^t)^{-1/2}+\ep_{d+1}^{-1}\qquad (1\leq i\leq d).$$
Therefore, with $\mathfrak{S}_F(\cdots)$ as in \eqref{eq: new set}, we have that
$$
\mathfrak{M}\subset \mathfrak{S}_{F}(\delta,K,\dots,K,\bT)
$$ 
with the following choices of $\delta$, $\bK=(K_i)_{i=1}^d$ and $\bT=(T_i)_{i=1}^n$:
 $$\begin{aligned}
&\delta=e^{-t},\\ 
& K=\max\{K_1,\dots,K_d\},\\
&K_i= (\ep'_i e^t)^{-1/2}&&(1\le i\le d), \\ 
& T_i=K_i+\ep_{d+1}^{-1}&& (1\leq i\leq d),\\
& T_{d+j}=\ep_{d+j}^{-1}&& (1\leq j\leq m).
\end{aligned}
$$
Note that 
\begin{equation}\label{eq: K is smaller than T}
    K_i\leq \ep_{d+1}^{-1}\iff \frac{\ep_{d+1}}{e^t}\leq \left(\frac{\ep_i'}{e^t}\right)^{1/2}\qquad (1\leq i\leq d).
\end{equation}

\subsubsection{Choice of $\ep_i'$}
We choose \begin{equation}\label{eq: choice of ep'}
    \ep_i'=\ep_{d+1}\qquad (1\leq i\leq d).
\end{equation}
Then \eqref{ep'<epd+1} is satisfied trivially, by \eqref{eq: chain} we have \eqref{eqn: ep' condition 2} and \eqref{eq: K is smaller than T} follows since $\ep_{d+1}<1$.
To summarise, now we have,
$$
\begin{aligned}
& K_i=  (\ep_{d+1}e^t)^{-1/2}&& (1\leq i\leq d),\\
& T_i=\ep_{d+1}^{-1}&& (1\leq i\leq d),\\
& T_{d+j}=\ep_{d+j}^{-1}&& (1\leq j\leq m).
\end{aligned}
$$
Then using the measure estimates as in \cite[Theorem 1.4]{BKM}, we get the following:

\begin{proposition}\label{Bound on minor}
    Let $F:\bU\to\R^n$ be a map as in \eqref{eq: F}, where $\bU$ be an open set in $\R^d$. Let $F$ be $l$-nondegenerate at $\bx_0\in \bU.$ Then there exists a ball $B_0$ centered at $\bx_0$, constant $t_0$ depending on $n,F, B_0$ such that  for $t\geq t_0,$ and any $\bve=(\ep_i)_{i=1}^n$ satisfying \eqref{eq: chain},
$$
\mu_d(\mathfrak{M}\cap B_0) \ll_{F, B_0} \left(\frac{\ep_{d+1}^{1/2}}{e^{t/2}} (e^{-t}\ep_{d+1}^{-d}\ep_{d+1}^{-1}\cdots\ep_{n}^{-1})\right)^{\alpha},
    $$ 
where $\mathfrak{M}=\mathfrak{M}(\bve,\bve', t)$ is given by \eqref{eq4.8} with $\bve'$ defined by \eqref{eq: choice of ep'} and $\alpha= \frac{1}{d(2l-1)(n+1)}.$
\end{proposition}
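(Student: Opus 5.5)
The plan is to apply the second (the ``$E(\cdots)^\alpha$'') term of Theorem~\ref{BKM_new2}, equivalently the quantitative nondivergence estimate of \cite[Theorem 1.4]{BKM}, to the set $\mathfrak{S}_F(\delta,K,\dots,K,\bT)$ that contains $\mathfrak{M}$. The parameters are those fixed just above: $\delta=e^{-t}$, $K=(\ep_{d+1}e^t)^{-1/2}$, $T_i=\ep_{d+1}^{-1}$ for $1\le i\le d$, and $T_{d+j}=\ep_{d+j}^{-1}$.

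First I take $B_0$ to be the ball from Theorem~\ref{BKM_new2}, or from \cite[Theorem 1.4]{BKM}, centred at $\bx_0$. The containment $\mathfrak{M}\subset\mathfrak{S}_F(\cdots)$ was already established via Lemma~\ref{dual lemma} and the explicit form of $a^\star g_{conv}^\star u_1^\star$, so it only remains to bound the measure of the latter set intersected with $B_0$.

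Next I check the hypotheses \eqref{eqn5.2}. We have $\delta=e^{-t}\le 1$, and all $T_i\ge1$ because $\ep_i<1$. By \eqref{eq: chain} the maximum is $\max_i T_i=\ep_{d+1}^{-1}$, so
$$\frac{T_1\cdots T_n}{\max_i T_i}=\ep_{d+1}^{-(d-1)}\ep_{d+1}^{-1}\cdots\ep_n^{-1}.$$
The condition $\delta^n<K\,T_1\cdots T_n/\max_iT_i$ then reads $e^{-nt}<(\ep_{d+1}e^t)^{-1/2}\ep_{d+1}^{-(d-1)}\prod_j\ep_{d+j}^{-1}$. Since every factor on the right involving $\ep$'s is at least $1$, it suffices that $e^{-nt}<e^{-t/2}$, which holds for all $t\ge1$.

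If instead I use Theorem~\ref{BKM_new2} directly, the first term is $\delta\prod_{i\le d}\min\{K,T_i\}\prod_j T_{d+j}\,\mu_d(B)$, and this must be absorbed. That would require a comparison of $\min\{K,\ep_{d+1}^{-1}\}$ with other quantities, which I want to avoid. I therefore prefer the pure BKM form \cite[Theorem 1.4]{BKM}, which for $K$ small compared with $r^{-1}$ gives only the term $E\big(\delta K\,T_1\cdots T_n/\max_iT_i\big)^{\alpha}$. Here $r=r(B_0)$ is fixed, and choosing $t_0$ large enough that $K=(\ep_{d+1}e^t)^{-1/2}\le r(B_0)^{-1}$ is not automatic. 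However $\min\{K,r^{-1}\}\le K$ always, so the bound with $K$ in place of $\min\{K,r^{-1}\}$ is valid in any case.

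Substituting the parameters gives
$$\delta K\frac{T_1\cdots T_n}{\max_iT_i}=e^{-t}(\ep_{d+1}e^t)^{-1/2}\ep_{d+1}^{-(d-1)}\ep_{d+1}^{-1}\cdots\ep_n^{-1},$$
which is exactly $\frac{\ep_{d+1}^{1/2}}{e^{t/2}}\,e^{-t}\ep_{d+1}^{-d}\ep_{d+1}^{-1}\cdots\ep_n^{-1}$ after rewriting $\ep_{d+1}^{-1/2}\ep_{d+1}^{-(d-1)}=\ep_{d+1}^{1/2}\ep_{d+1}^{-d}$. The constant $E$ depends only on $F$ and $B_0$, which gives the claimed bound with exponent $\alpha=\frac1{d(2l-1)(n+1)}$.

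The main obstacle is the first, ``convex body'' term in the estimate, i.e.\ the contribution where the gradient $\nabla F(\bx)\ba^\top$ is large. In \cite[Theorem 1.4]{BKM} this is handled by the condition $\delta\le 1$ together with $K$ being the actual gradient bound. The point to verify is that only the nondivergence (second) term is needed here, because in the BKM statement the bound is purely of the form $E(\delta K T_1\cdots T_n/\max T_i)^\alpha$. If that fails, I would fall back on Theorem~\ref{BKM_new2} and bound the extra first term by $e^{-t}\ep_{d+1}^{-d}\prod_j\ep_{d+j}^{-1}(\ep_{d+1}e^t)^{-d/2}\ep_{d+1}^{d}\,\mu_d(B_0)$, showing it is dominated by the stated quantity.
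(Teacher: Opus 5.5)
Your proposal is correct and follows the paper's route. The paper simply plugs the containment $\mathfrak{M}\subset\mathfrak{S}_F(\delta,K,\dots,K,\bT)$, with $\delta=e^{-t}$, $K=(\ep_{d+1}e^t)^{-1/2}$, $T_i=\ep_{d+1}^{-1}$ $(i\le d)$ and $T_{d+j}=\ep_{d+j}^{-1}$, into \cite[Theorem 1.4]{BKM}; your check of \eqref{eqn5.2} and your computation of $\delta K\,T_1\cdots T_n/\max_i T_i$ supply exactly the details the paper omits. Your worry about $K$ versus $r(B_0)^{-1}$ and the convex-body term is unnecessary, because the BKM bound holds for every $K>0$ and does not involve $r$; your fallback via Theorem~\ref{BKM_new2} also works, since its first term is $(\ep_{d+1}/e^t)^{(d-1)/2}$ times the main quantity and is therefore at most its $\alpha$-th power whenever the bound is nontrivial.
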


\subsection{Finishing the proof}\label{sec: final convergence}

Since $F$ is $l$-nondegenerate at $\bx_0$ then it is enough to show

$$
\mu_d(F^{-1}\mathcal{S}_n(\psi_1,\cdots,\psi_n)\cap B_0)=0,
$$
when $\sum_{q}\psi_1(q)\cdots\psi_n(q)<\infty.$ Then for all $t\geq T, T\geq 1$ such that 

$$F^{-1}\mathcal{S}_n(\psi_1,\cdots,\psi_n)\cap B_0\subset \bigcup_{t\geq T}\underbrace{\mathfrak{M}((e\psi_i(e^{t-1}))_{i=1}^n, t)\cap B_0}_{A_t}\cup \bigcup_{t\geq T} B_t, $$ where 
$$
B_t:= \bigcup_{(q, p_1,\cdots, p_n)\in R(B_0\setminus\mathfrak{M}((e\psi_i(e^{t-1}))_{i=1}^n, t),(\psi_i(e^{t-1}))_{i=1}^n,t-1)} \prod_{i=1}^d \left(\frac{p_i}{q}-\frac{\psi_i(e^{t-1})}{e^{t-1}}, \frac{p_i}{q}+\frac{\psi_i(e^{t-1})}{e^{t-1}}\right)\cap B_0.
$$
By \eqref{eq: prod is greater than}, and \eqref{eq: psi chain} we get that 
$\psi_{d+1}^d(e^{t-1})\psi_{d+1}(e^{t-1})\cdots \psi_{n}(e^{t-1}) e^{t-1}>e^{-\mathfrak{c}(t-1)}.$
Thus by
Proposition \ref{Bound on minor}, we get that 
$$\mu_d(A_t)\leq \left(\frac{e^{\mathfrak{c}t}}{e^{t/2}}\right)^{\alpha}.$$ Since $\mathfrak{c}<\frac{1}{2},$ we get 
$\sum_{t\geq T} \mu_d(A_t)<\infty.$ Now by Proposition \ref{prop: count on major}, we get that 
$$\mu_d(B_t)\leq \prod_{i=1}^d \frac{\psi_i(e^{t-1})}{e^{t}} e^{t(d+1)}\prod_{j=1}^m \psi_{d+j}(e^{t-1})\mu_{d}(B_0)=e^t \prod_{i=1}^n\psi_{i}(e^{t-1}) \mu_{d}(B_0).$$ By monotonicity of each $\psi_i$ and convergence of the series $\sum \psi_1(q)\cdots \psi_n(q),$ we get that 
$\sum_{t\geq T} \mu(B_t)<\infty.$ By the convergence Borel-Cantelli lemma, the proof is complete.

\section{The general case}\label{sec: general}

In this section, we point out the modifications required to extend the previous arguments from the case \eqref{eq: F}, namely $F=(\bx,f(\bx))$, to arbitrary nondegenerate maps. Since we impose no conditions on $\mathcal{M}$ beyond nondegeneracy, and since we approximate each coordinate by different functions, we must deal with manifolds that are not of the form \eqref{eq: F}. For instance, a nondegenerate manifold may contain a subset of positive measure on which the tangent plane is parallel to one of the coordinate axes. This occurs, for example, for the cylinder
$$
\{(x,\sqrt{1-x^2}, z) : x\in(0,1),\ z\in\R\},
$$
which cannot be written in the form \eqref{eq: F} with all independent variables appearing first.

Suppose $F:\bU\to\R^n$ is nondegenerate at almost every point, where $F(\bx)=(F_{i}(\bx))_{i=1}^n, F_i:\bU\to \R$. Then, for almost every point $\bx_0\in\bU$ we have that
$(\partial_i F_j(\bx_0))_{i=1}^d \neq \mathbf{0}$ for every $1\le j\le n$, and
\begin{equation}\label{eq5.1}
\operatorname{rank}
\begin{bmatrix}
\partial_1 F_1(\bx_0) & \cdots & \partial_1 F_n(\bx_0) \\
\vdots & \ddots & \vdots \\
\partial_d F_1(\bx_0) & \cdots & \partial_d F_n(\bx_0)
\end{bmatrix}
= d.
\end{equation}
This can be seen as a consequence of the nondegeneracy of $F$ on $\bU$ by using repeated differentiation as in \cite[Lemmas~2~and~3]{MR1387861}, together with an application of Fubini's theorem to extend the argument to functions of several variables.

Let $\bx_0\in\bU$ be as above. Let $\mathcal{A}$ be the collection of all $d$-tuples
$(i_1,\ldots,i_d)$ with $1\le i_1<\cdots<i_d\le n$ such that the submatrix of \eqref{eq5.1} with columns $\{i_j\}_{j=1}^d$ has rank $d$.
Since $(\partial_i F_1(\bx_0))_{i=1}^d \neq \mathbf{0}$, we have that
$
\min\{\, i_1 : (i_j)_{j=1}^d \in \mathcal{A} \,\} = 1.
$
By the Implicit Function Theorem, we can write
$$
F(\bx) = (x_1,\tilde{F}_2(\bx),\ldots,\tilde{F}_n(\bx))
$$
on a sufficiently small neighborhood $V_0$ of $\bx_0$. Now, by nondegeneracy, for almost every point $\bx_1\in V_0$, we have two possibilities:
\begin{itemize}
\item $\tilde{F}_2(\bx)$ is a function of $x_1$ alone on a neighborhood $V_1$ of $\bx_1$, in which case, on $V_1$,
$$
F(\bx) = (x_1,g_1(x_1),\tilde{F}_3(\bx),\ldots,\tilde{F}_n(\bx));
$$
\item applying another change of variables on a sufficiently small neighborhood $V_1$ of $\bx_1$, we can write $F$ in the form
$$
F(\bx) = (x_1,x_2,\tilde{\tilde{F}}_3(\bx),\ldots,\tilde{\tilde{F}}_n(\bx)).
$$
\end{itemize}

By continuing this procedure inductively on the remaining coordinate functions, we can write $F$ on a neighborhood $V$ of almost every point in the form
\begin{equation}\label{eq: Para_new}
(\bx_1,g_1(\bx_1),\bx_2,g_2(\bx_1,\bx_2),\ldots,\bx_{\mathfrak{s}},g_{\mathfrak{s}}(\bx)), 
\qquad \bx\in V\subset\R^d,
\end{equation}
where $\bx_k\in\R^{d_k}$ and $g_k(\bx_1,\ldots,\bx_k)\in\R^{m_k}$ for $1\le k\le\mathfrak{s}$,
$d_1+\cdots+d_{\mathfrak{s}}=d$, $m_1+\cdots+m_{\mathfrak{s}}=m$, and each $g_k$ depends only on $\bx_1,\ldots,\bx_k$.

\subsection{Further notation}\label{sec: notation general}  

Let 
$$
\{1,\ldots,n\} = I(1) \cup J(1) \cup \cdots \cup I(\fs) \cup J(\fs),
$$ 
where for $1 \le k \le \fs$, $I(k)$ and $J(k)$ are strictly increasing consecutive subsets of $\N$ with 
$$
\# I(k) = d_k, \quad \# J(k) = m_k, \quad \sum_{i=1}^{\fs} d_i = d, \quad \sum_{i=1}^{\fs} m_i = m.
$$
For convenience, we define 
$$
I = \bigcup_{k=1}^{\fs} I(k), \quad 
J = \bigcup_{k=1}^{\fs} J(k), \quad
\bar I_k = \bigcup_{i=1}^k I(i), \quad
\bar J_k = \bigcup_{j=k}^{\fs} J(j), \quad 1 \le k \le \fs.
$$

We assume that every number in $J(k)$ is greater than every number in $I(k)$, and smaller than every number in $I(k+1)$. Moreover, $I(k)$, $J(k)$ for $1\le k \le \fs-1$ and $I(\fs)$ are nonempty, but $J(\fs)$ may be empty.

For convenience, we introduce the notation 
$$
\bar\bx_k := (\bx_1,\ldots,\bx_k) \in \R^{d_1+\cdots+d_k}, \quad 1 \le k \le \fs.
$$ 
Note that $\bar\bx_{\fs} = \bx \in \R^d$ and that 
$$
\bx = (x_i)_{i\in I} \in \R^d, \qquad \mathbf{y} = (y_k)_{k\in I\cup J} \in \R^n.
$$
With this notation, we can rewrite \eqref{eq: Para_new} as
$$
(\bx_1, g_1(\bx_1), \bx_2, g_2(\bar\bx_2), \ldots, \bx_{\fs}, g_{\fs}(\bar\bx_{\fs})).
$$

Define the matrix $U_1(\bx)$ by
\begin{equation}\label{def_U_1}
\begin{bmatrix}
1_{m_{\fs}}\!\! & -\nabla_{\bx_{\fs}} g_{\fs} & 0_{m_{\fs} \times m_{\fs-1}} & -\nabla_{\bx_{\fs-1}} g_{\fs} & \cdots & -\nabla_{\bx_1} g_{\fs} & g_{\fs} - \sum_{i=1}^{\fs} \bx_i^T \nabla_{\bx_i} g_{\fs} \\
& 1_{d_{\fs}} & & & & & \bx_{\fs} \\
& & 1_{m_{\fs-1}} & -\nabla_{\bx_{\fs-1}} g_{\fs-1} & \cdots & -\nabla_{\bx_1} g_{\fs-1} & g_{\fs-1} - \sum_{i=1}^{\fs-1} \bx_i^T \nabla_{\bx_i} g_{\fs-1} \\
& & \vdots & \vdots & \vdots & \vdots & \vdots \\
& & & & 1_{m_1} & -\nabla_{\bx_1} g_1 & g_1 - \bx_1^T \nabla_{\bx_1} g_1 \\
& & & & & 1_{d_1} & \bx_1 \\
& & & & & & 1
\end{bmatrix},
\end{equation}
where $1_k$ is the identity $k\times k$ matrix and $0_{k\times m}$ is the zero $k\times m$ matrix.
Note that in \eqref{def_U_1}, the first $m_{\fs}$ rows are evaluated at $\bar\bx_{\fs}$, the next $d_{\fs}$ rows are evaluated at $\bx_{\fs}$, the next $m_{\fs-1}$ rows are evaluated at $\bar\bx_{\fs-1}$, and so on.

Now, the dual matrix is given by
$$U_1^\star(\bx)= \begin{bmatrix}
    1 & -\bx_1 & -g_1(\bx_1) & -\bx_2 & -g_2(\bar\bx_2) & \cdots & -g_{\fs}(\bx)\\
    & 1_{d_1} & \nabla_{\bx_1}g_1 & 0_{d_1\times d_2} & \nabla_{\bx_1}g_2 & \cdots & \nabla_{\bx_1}g_{\fs}\\
    & & 1_{m_1} & & & & \\
    & & & 1_{d_2} & \nabla_{\bx_2}g_2 & \cdots & \nabla_{\bx_2}g_{\fs}\\
    & & &\vdots & \vdots &\vdots  &\vdots\\
    & & & & &  1_{d_{\fs}} & \nabla_{\bx_{\fs}}g_{\fs}\\
    & & & & & & 1_{m_{\fs}}
\end{bmatrix}.
$$
Note there exists two permutation matrices  $\omega_1, \omega_2\in \mathrm{GL}_{n+1}(\R)$ such that $$U_1^\star(\bx)= \omega_1 u_1^\star(\bx)\omega_2,$$  where $u_1^\star(\bx)$ is as in \eqref{eqn:dual u_1(x)} for the map 
$$
\bx\to (\underbrace{\bx_1,\cdots,\bx_{\fs}}_{\bx}, g_1(\bx_1),\cdots, g_{\fs}(\bx))=  F(\bx)\omega_2:=\tF(\bx)$$ 
with $\bx_i$ having the same meaning as  in \S \ref{sec: notation general}.

\subsection{Divergence}
To generalise the proof of the divergence case of Theorem~\ref{thm: dream} from the special $F$ given by \eqref{eq: F} to general $F$ we only need to verify that Proposition~\ref{thm: ubiq main} holds for general $F$ as in \eqref{eq: Para_new}. Let $c>0$, $\ep_i>0$ ($1\leq i\leq n$) satisfy \eqref{eq: chain} and $Q>0$. Define 
\begin{equation}\label{eq: new g}
    g:=c^{-1}\diag(\underbrace{\ep_{i}}_{i\in J(\fs)}, \underbrace{\ep_i'}_{i\in I(\fs)}, \cdots, \underbrace{\ep_{i}}_{i\in J(1)},\underbrace{\ep'_{i}}_{i\in I(1)}, c^{n+1}Q),
\end{equation}
where $\ve'_i$ $(i\in I)$ are positive and we assume, replacing \eqref{eq: ep'}, that
\begin{equation}\label{eq: ep'+}
\left(\prod_{i\in I}\ve'_i\right)\left(\prod_{j\in J}\ve_j\right) Q=1.
\end{equation}
Define 
$$\ep_{I(k)}:=\prod_{i\in I(k)}\ep_i,\qquad \ep_{J(k)}=: \prod_{j\in J(k)}\ep_j\qquad\text{and}\qquad
\ep_{I(k)}'=\prod_{i\in I(k)}\ep_i'\quad (1\leq k\leq \fs).
$$ 
Then, with the notation introduced in this section, we can rewrite \eqref{eq: ep'+} as 
$$
\prod_{k=1}^{\fs}\ep_{I(k)}'\ep_{J(k)} Q=1.
$$
Note that 
 \begin{equation}\label{eq: new g dual}
    ( g^{-1})^\star= c^{-1}\diag(c^{n+1}Q, \underbrace{\ep'_{i}}_{i\in I(1)}, \underbrace{\ep_{j}}_{j\in J(1)}, \cdots,  \underbrace{\ep_i'}_{i\in I(\fs)}, \underbrace{\ep_{j}}_{j\in J(\fs)}).
 \end{equation} 
Let  
$$
\mathcal{G}=\mathcal{G}(c,Q, \{\ep_j, \ep_{i}'\}_{i\in I, j\in J}):=\{\bx\in \bU: \lambda_{n+1}(g^{-1} U_1(\bx)\Z^{n+1})\leq c^{-n}\}.
$$ 
This generalises \eqref{def: mathcal{G}} to the case of a general $F$, with $g$ redefined and $u(\bx)$ replaced by $U_1(\bx)$.

The following lemma is a generalisation of Lemma~\ref{lem: projection}, and its proof follows the same line of argument with only minor modifications. Let us recall from \S \ref{sec: projection}, that for any open set $V\subset \R^d$ and any $\br=(r_i)_{i=1}^d\in\Rp^d$, 
$$
V^{(\br)}=\{\bx\in \R^d~|~ \tDelta(\bx,\br)=\prod_{i=1}^d B(x_i, r_i)\subset V\}.
$$

\begin{lemma}\label{lem: new projection}
Let $Q>0$, $\varepsilon_i>0$ $(1\leq i\leq n)$ satisfy \eqref{eq: chain}, $\ep_i'>0$ $(i\in I)$, $c<1$. Suppose that \eqref{eq: ep'+} holds and  \begin{equation}\label{eq: new restriction linear}
\min_{j\in J(k)} \varepsilon_{j}>\frac{1}{Q}\max_{i, i' \in\bar I_k}\varepsilon_i' \varepsilon'_{i'}\qquad\text{for each } 1\leq k\leq \mathfrak{s}.
\end{equation}
For $i\in I$ let $r_i=\varepsilon'_i(cQ)^{-1}$. Then for any $\bx\in \mathcal{G}\cap \bU^{(\br)}$ there exists an integer point 
$$
(q,\bb)=\big(q, (b_i)_{i\in I(1)}, (b_j)_{j\in J(1)},\cdots, (b_i)_{i\in I(\mathfrak{s})}, (b_j)_{j\in J(\mathfrak{s})}\big)
$$
in
$$
\N\times \Z^{d_1}\times \Z^{m_1}\times \cdots\times \Z^{d_\mathfrak{s}}\times \Z^{m_\mathfrak{s}}=\N\times\Z^{n}
$$ 
such that for $1\leq k\leq \mathfrak{s}$\begin{equation}\label{eq: linear_div_1}
    \begin{aligned}
    &\vert qx_i-b_i\vert\ll_{n} \frac{n+1}{c^{n+1}} \varepsilon_i' \qquad \text{for }i\in I(k), \\
& \vert qg_{j}((b_i)_{i\in \bar I_k}/q)-b_j\vert\leq \frac{(n+1)M}{c^n}\ep_{j} \qquad\text{for } j\in J(k),\\[1ex]
& (n+1)Q\leq q\leq 3(n+1)Q,
    \end{aligned}     
    \end{equation}
    where $M$ is as in \eqref{eq: bound on derivatives}.
\end{lemma}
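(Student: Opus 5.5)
We follow the proof of Lemma~\ref{lem: projection}, with $u(\bx)$ replaced by $U_1(\bx)$ and $g$ by \eqref{eq: new g}. Since $\bx\in\mathcal{G}$, the lattice $g^{-1}U_1(\bx)\Z^{n+1}$ has $n+1$ linearly independent vectors $\bv_1,\dots,\bv_{n+1}$ of norm at most $c^{-n}$.

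\textbf{Step 1: the integer vector.} Take the real vector $\bw\in\R^{n+1}$ with last coordinate $w_{n+1}=-2(n+1)Q$. Its coordinates in the positions indexed by $I(k)$ are $-w_{n+1}\bx_k$, and those indexed by $J(k)$ are $-w_{n+1}g_k(\bar\bx_k)$. Expand $g^{-1}U_1(\bx)\bw=\sum\eta_i\bv_i$ and round the coefficients, $t_i=\lfloor\eta_i\rfloor$. This yields an integer vector $(\bb,q)$, up to sign conventions, with
$$\|g^{-1}U_1(\bx)((\bb,q)+\bw)\|\le (n+1)c^{-n}.$$
Reading off coordinates one row-block at a time:
\begin{itemize}
\item The last row gives $(n+1)Q\le q\le 3(n+1)Q$.
\item The rows indexed by $I(k)$ (the block rows $1_{d_k}$, $\bx_k$) give $|qx_i-b_i|\le (n+1)c^{-n-1}\ep'_i$ for $i\in I(k)$, which is the first line of \eqref{eq: linear_div_1}.
\item The rows indexed by $J(k)$ give
$$\Bigl|qg_j(\bar\bx_k)-\sum_{i\in\bar I_k}\partial_ig_j(\bar\bx_k)(qx_i-b_i)-b_j\Bigr|\le (n+1)c^{-n-1}\ep_j.$$
Only the variables in $\bar I_k$ appear here, because $g_k$ depends only on $\bar\bx_k$; this is exactly why the block structure of $U_1$ was chosen.
\end{itemize}

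\textbf{Step 2: Taylor expansion and the key estimate.} Set $\mathbf{y}_k=(b_i/q)_{i\in\bar I_k}$. Because $\bx\in\bU^{(\br)}$ with $r_i=\ep'_i(cQ)^{-1}$ and $q\ge(n+1)Q$, we have $|x_i-b_i/q|\le r_i$, so $\mathbf{y}_k$ lies in the domain. Expand $g_j$ at $\bar\bx_k$ to second order, using the bound \eqref{eq: bound on derivatives} on second derivatives (now for the $g_k$). The quadratic remainder is at most
$$Mq\max_{i,i'\in\bar I_k}|x_i-b_i/q|\,|x_{i'}-b_{i'}/q|\ll M Q^{-1}c^{-2(n+1)}\max_{i,i'\in\bar I_k}\ep'_i\ep'_{i'}.$$
By \eqref{eq: new restriction linear} this is $\ll \min_{j\in J(k)}\ep_j\le\ep_j$, which gives the second line of \eqref{eq: linear_div_1}.

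\textbf{Main obstacle.} The delicate point is tracking the block structure correctly. One must check that the $J(k)$-rows of $U_1$, after scaling by $g^{-1}$, involve only the $I(k')$-coordinates with $k'\le k$. This ensures that the relevant restriction is the block-wise \eqref{eq: new restriction linear}, rather than a global condition over all $i,i'\in I$. One must also keep the ordering of the diagonal of $g$ consistent with the row ordering in \eqref{def_U_1}.

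The constants $c^{-n}$ versus $c^{-n-1}$ are only cosmetic, and the same bookkeeping as in Lemma~\ref{lem: projection} applies.
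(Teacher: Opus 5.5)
Your proposal is correct and follows the paper's route. The paper proves this lemma by rerunning Lemma~\ref{lem: projection} with $U_1(\bx)$ in place of $u(\bx)$, and you do the same: you use that $g_k$ depends only on $\bar\bx_k$, so the $J(k)$-rows involve only the coordinates in $\bar I_k$, and you absorb the quadratic Taylor remainder via \eqref{eq: new restriction linear}.

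One small repair is needed. With the printed radius $r_i=\ep_i'(cQ)^{-1}$, your bounds only give $|x_i-b_i/q|\le c^{-n-1}\ep_i'/Q=c^{-n}r_i$. Since $c^{-n}>1$, this does not place $\mathbf{y}_k$ in the box $\prod_i B(x_i,r_i)$. You should take $r_i=\ep_i'(c^{n+1}Q)^{-1}$, as in Lemma~\ref{lem: projection}; the printed radius is evidently a typo, and this is the radius $\rho_0\ep_i'/Q$ actually used in Proposition~\ref{thm: ubiq main}.

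Similarly, the Taylor term is of order $Mc^{-2(n+1)}\ep_j$. The explicit constant in the second line of \eqref{eq: linear_div_1} should therefore be read up to a factor depending on $n,d,M,c$, which is all that is used later.
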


\begin{remark}
The importance of the structure in $F$ given by \eqref{eq: Para_new} manifests in \eqref{eq: new restriction linear}, since on the right side of the equation only $\bar{I_{k}}$ appears, instead of $I$.
\end{remark}

Now, armed with Lemma~\ref{lem: new projection}, we demonstrate how to complete the proof of the divergence case. Using Lemma~\ref{dual lemma} and \eqref{eq: chain}, we get that the complement to $\mathcal{G}$ is contained in
$$
\{\bx\in \bU~|~\lambda_1((g^{-1})^\star U_1(\bx)^\star\Z^{n+1})\ll_{n} c^n\}
$$ 
which is further a subset of 
\begin{equation}\label{eq: new BKM set_gen}\left\{\bx\in \bU:\exists\;(a_0,\ba)\in\Z\times\Z^n_{\neq\bf0}\;\;\text{such that }\left.
\begin{array}{l}
|a_0+F(\bx)\ba^\top|<Q^{-1}\\[1ex]
\vert \partial_iF(\bx)\ba^\top\vert<K_i\quad (i\in I)\\[1ex]
\vert a_k\vert<T_k\quad (k\in J) 
\end{array}
\right.\right\},
\end{equation} where $\ba=(a_i)_{i\in I,J}$,  $\delta$, $(K_i)_{i\in I}$, $T_1,\cdots, T_n$ are positive real parameters defined as follows: \begin{equation}\label{eq: KT general}
\begin{aligned}
&K_i\asymp_{n}c^{n+1}\varepsilon_i'^{-1}\qquad (i\in I), \\ 
& T_i \asymp_{n, F}c^{n+1}\varepsilon_i'^{-1}+ c^{n+1}\Big(\min_{j\in  J(k)}{\ep_j}\Big)^{-1}\qquad (i\in I(k),~1\leq k\leq \fs), \\
& T_{j} \asymp_{n}c^{n+1}\varepsilon_{j}^{-1}\qquad (j\in J).\end{aligned}
\end{equation}

\begin{remark}Note that by the definition of $\bar J_k$, and \eqref{eq: chain}
we get that $\min_{j\in \bar J_k}{\ep_j}= \min_{j\in J(k)}{\ep_j}$, which we used above in the second equation. Note that parametrization \eqref{eq: Para_new} is crucially used. 
\end{remark}

We choose $(\ep_i')_{i\in I}$ similarly to \eqref{eq: simple ep'} as follows:
\begin{equation}\label{eq: simple ep'_new}
    \ep_1'=\frac{1}{\ep_2\cdots \ep_n Q}\qquad\text{and}\qquad \ep_i'=\ep_i\quad (i\in I\setminus\{1\}).
\end{equation}

By \eqref{eq: KT general}, using \eqref{eq: chain} we get that up to constants depending on $F,n,m,$
$$\begin{aligned}
    & K_1\asymp c^{n+1}\ep_{1}'^{-1},\\
    & K_i\asymp c^{n+1}\ep_i^{-1}
    \qquad(i\in I\setminus\{1\}),\\
    & T_1\asymp nc^{n+1}\max\left\{\ep_{1}'^{-1}, \left(\min_{j\in J(1)} \ep_{j}\right)^{-1}\right\},\\
& T_i\asymp c^{n+1} n \ep_{i}^{-1}\qquad (2\leq i\leq n).
    \end{aligned}
    $$
With these choices, we can exactly follow the proof of Proposition \ref{prop: crucial in Case 2}. We observe that, in \eqref{eq: new BKM set_gen} the set with $\bx\to F(\bx)$ is the same as the set with $\bx\to \tF(\bx):=F(\bx)\omega_2$, where $\omega_2$ is a permutation matrix, and $\tF$ is in the form \eqref{eq: F}. The upshot is that, we can apply Theorem~\ref{BKM_new2} to extend Proposition \ref{prop: crucial in Case 2} to general $F$. Using Proposition \ref{prop: crucial in Case 2}, we derive Theorem~\ref{thm: ubiq main} for general $F$ as parametrized in \eqref{eq: Para_new} and it remains to apply Theorem~KW to finish.

\subsection{Convergence}
Here we generalise the proof of the convergence case of Theorem~\ref{thm: dream} from the special $F$ given by \eqref{eq: F} to general $F$.

Let $F$ be parametrized as in \eqref{eq: Para_new}, for every $\bx\in V\subset \bU.$
In what follows, we take \begin{equation}\label{eq: ep' new}
    \ep_{i}'=\min_{j\in J(k)}\ep_{j}\qquad (i\in I(k),~1\leq k\leq \fs).
\end{equation}
This choice implies that
\begin{equation}\label{eq:epep'new}
\ep_i\leq \ep_{i}'\qquad (i\in I(k),~1\le k\le \fs),\end{equation} which further gives that 
$\ep_{i}\le \left(\ep_i' e^t\right)^{1/2}$ for $i\in I(k)$, $1\le k\le \fs$.

The same line of proof as in Lemma \ref{lem: x to x_0} together with \eqref{eq:epep'new} gives the following lemma.
\begin{lemma}\label{lem: gen x to x_0}
Let $\bve=(\ep_i)_{i=1}^n$ with $0<\ep_i<1$ satisfying \eqref{eq: chain}. Let $\bve'=(\ep_i')_{i\in I}$  be as in \eqref{eq: ep' new} and $\bx_0=(x_{0,i})_{i\in I}\in \bU$. Suppose that 
\begin{equation}\label{eq5.14q}
\bx\in \Delta=\Delta(\bx_0):=\prod_{i\in I} B\left(x_{0,i}, \left(\frac{\ep_i'}{e^t}\right)^{1/2}\right)\subset \bU
\end{equation}
and $(\bp,q)\in\Z^{n+1}$ satisfy 
\begin{equation}\label{eq5.14}
\begin{aligned}
    &\left\vert g_{j}(\bx)-\frac{p_{j}}{q}\right\vert <\frac{\ep_{j}}{e^t}\qquad (j\in J),\\
    & \left\vert x_i-\frac{p_i}{q}\right\vert<\frac{\ep_i}{e^t}\qquad (i\in I),\\
    & 1\leq q\leq e^t.
    \end{aligned}
\end{equation}
Then 
$$\begin{aligned}
&\left\vert qg_{j}(\bx_0)-\sum_{i\in I(k)} \partial_i g_{j}(\bx_0)(qx_{0,i}-p_i)-p_{j}\right\vert \ll_{F, n}\ep_{j}\qquad (j\in J(k),~1\leq k\leq \fs),\\
    & \left\vert qx_{0,i}-p_i\right\vert<2(\ep_i' e^t)^{1/2} \qquad (i\in I),\\[1ex]
    & 1\leq q\leq e^t.
    \end{aligned}$$
\end{lemma}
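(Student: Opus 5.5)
The plan is to follow the proof of Lemma~\ref{lem: x to x_0} line by line. The block structure \eqref{eq: Para_new} replaces $f_j$ by $g_j$, and the choice \eqref{eq: ep' new} supplies the needed comparisons through \eqref{eq:epep'new}. The proof is in three steps, the last of which is the main obstacle.

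\textbf{Step 1: the middle and last inequalities.} The bound $1\le q\le e^t$ is simply carried over from \eqref{eq5.14}. For $i\in I$, I write $x_i=x_{0,i}+x_i'(\ep_i'/e^t)^{1/2}$ with $|x_i'|\le1$, which is possible by \eqref{eq5.14q}. The triangle inequality then gives
$$|qx_{0,i}-p_i|<\ep_i+e^t(\ep_i'/e^t)^{1/2}\le 2(\ep_i'e^t)^{1/2}.$$
The last step uses \eqref{eq:epep'new} in the form $\ep_i\le(\ep_i'e^t)^{1/2}$. This settles the second line of the conclusion.

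\textbf{Step 2: linearising at $\bx$.} Fix $1\le k\le\fs$ and $j\in J(k)$. From \eqref{eq5.14} we have $|qg_j(\bx)-p_j|<\ep_j$ and $|qx_i-p_i|<\ep_i$. For $i\in I(k)$ the index $i$ is smaller than $j$, so \eqref{eq: chain} gives $\ep_i\le\ep_j$. Hence
$$\Big|qg_j(\bx)-\sum_{i\in I(k)}\partial_ig_j(\bx_0)(qx_i-p_i)-p_j\Big|\ll_F\ep_j.$$
This is the analogue of the first display in the proof of Lemma~\ref{lem: x to x_0}.

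\textbf{Step 3: passing from $\bx$ to $\bx_0$ (main obstacle).} Next I replace $\bx$ by $\bx_0$. By Taylor's theorem with the bound \eqref{eq: bound on derivatives},
$$q\big(g_j(\bx)-g_j(\bx_0)\big)=q\sum_{i\in\bar I_k}\partial_ig_j(\bx_0)(x_i-x_{0,i})+O\Big(e^t\max_{i\in\bar I_k}\ep_i'e^{-t}\Big).$$
The error term is $\ll\max_{i\in\bar I_k}\ep_i'$. By \eqref{eq: ep' new} and \eqref{eq: chain}, an index $i\in I(k')$ with $k'\le k$ has $\ep_i'=\min_{J(k')}\ep\le\ep_j$, so this error is $\ll_F\ep_j$.

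For the indices $i\in I(k)$, the linear terms $q\,\partial_ig_j(\bx_0)(x_i-x_{0,i})$ combine exactly with the sum in Step 2. This turns $qx_i-p_i$ into $qx_{0,i}-p_i$. The leftover cross terms $(\partial_ig_j(\bx_0)-\partial_ig_j(\bx))(qx_i-p_i)$ are handled as in Lemma~\ref{lem: x to x_0}: they are $\ll(\ep_i'e^{-t})^{1/2}\ep_i\le\ep_j$.

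The delicate point is the linear terms coming from the earlier blocks $i\in\bar I_{k-1}$, which do not appear in the stated sum over $I(k)$. A crude estimate only gives $e^t(\ep_i'e^{-t})^{1/2}$ for them, which is too large. My first attempt will be to rewrite each such term as
$$\partial_ig_j(\bx_0)\big((qx_i-p_i)-(qx_{0,i}-p_i)\big),$$
bound $|qx_i-p_i|<\ep_i\le\ep_j$, and absorb the remaining piece $\partial_ig_j(\bx_0)(qx_{0,i}-p_i)$ into the left-hand side. This absorption works if the lemma's sum is read as running over all variables on which $g_j$ depends.

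If one insists on the sum over $I(k)$ alone, this absorption is not available. One then needs an additional input that makes $(qx_{0,i}-p_i)_{i\in\bar I_{k-1}}$ small on the relevant region, for instance a more restrictive box in the earlier coordinates. I expect this to be the step requiring care, and I would check it first against how the lemma is used in the counting argument that generalises Lemma~\ref{lem: counting locally on good}.
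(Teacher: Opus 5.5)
Your argument is the paper's own route: the paper proves this lemma only by saying that the proof of Lemma~\ref{lem: x to x_0} carries over using \eqref{eq:epep'new}. Your Steps 1--3 carry this out correctly, including the essential use of the block structure, namely that the Hessian of $g_j$, $j\in J(k)$, involves only indices in $\bar I_k$, for which $\ep_i'\le\ep_j$.

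You should resolve your final hedge in favour of the absorption reading. With the sum over $I(k)$ alone the statement is false under its hypotheses, so no extra input can rescue that version. For a counterexample, take
\begin{itemize}
\item $F(x_1,x_3)=(x_1,x_1^2,x_3,x_1x_3)$, with $I(1)=\{1\}$, $J(1)=\{2\}$, $I(2)=\{3\}$, $J(2)=\{4\}$;
\item a rational point $\bp/q$ on the image with $q\asymp e^t$, and $\bx=(p_1/q,p_3/q)$;
\item $\bx_0=\bx+(h,0)$ with $h=\tfrac12(\ep_1'/e^t)^{1/2}$.
\end{itemize}
For $j=4$ the left-hand side equals $qh|x_3|\asymp(\ep_2e^t)^{1/2}|x_3|$, which is not $\ll\ep_4$. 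Adding the $i=1$ term cancels it exactly.

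The sum is meant to run over $\bar I_k$, or equivalently over all of $I$, since $\partial_i g_j\equiv0$ for $i\notin\bar I_k$. This is precisely the linear form in the $J(k)$-rows of $U_1(\bx_0)$ in \eqref{def_U_1}, and it is what the counting in Lemma~\ref{lem: gen counting locally on good} needs.
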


\medskip

Define the following generalisation of \eqref{eq4.2} 
$$
\begin{aligned}
   & \mathfrak{a}=\diag(\underbrace{1}_{\#J(\fs)}, \underbrace{\frac{\ep_i}{(\ep_i'e^t)^{1/2}}}_{i\in I(\fs)^\star},\cdots, \underbrace{1}_{\#J(1)},\underbrace{\frac{\ep_i}{(\ep_i'e^t)^{1/2}}}_{i\in I(1)^\star}, 1),
\end{aligned}
$$
where by $I(k)^\star$ we mean the longest permutation of $I(k), 1\le k\le \fs.$
Similar to \S \ref{sec: convergence}, define 
$$\mathfrak{M}=\mathfrak{M}(\bve, t):=\{\bx\in \bU~|~\lambda_{n+1}(\mathfrak{a}g_{conv} U_1(\bx)\Z^{n+1})>\phi\},$$
where $g_{conv}$ is defined in \S \ref{sec: convergence} by \eqref{eq4.1}. 
Again, by the choice of $\ep_i'$, following  the same proof as in Lemma \ref{lem: counting locally on good} we obtain the following

\begin{lemma}\label{lem: gen counting locally on good}
    Let $B$ be a ball in $\bU$. Suppose that \eqref{eq: chain} holds, $t\in\N$, and $(\ep_i')_{i\in I}$  be as in \eqref{eq: ep' new}. For every $\bx_0$ in $B\cap \bU\setminus \mathfrak{M}$, we have that
    $$
    N\left(\Delta(\bx_0)\cap B, \bve, t\right)\ll_{F,n} e^{t(d+1)}\prod_{j\in J}\ep_{j} \left(\frac{\prod_{i\in I}\ep_i'}{e^{dt}}\right)^{1/2}.
    $$
\end{lemma}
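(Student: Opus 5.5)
We mirror the proof of Lemma~\ref{lem: counting locally on good}, replacing $u_1$ by $U_1$ and $a$ by $\mathfrak{a}$.

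\textbf{Step 1: lattice points in a box.} Fix $\bx_0\in B\cap\bU\setminus\mathfrak{M}$ and let $(\bp,q)$ be counted by $N(\Delta(\bx_0)\cap B,\bve,t)$. By Lemma~\ref{lem: gen x to x_0} we have three bounds:
\begin{itemize}
\item $|qx_{0,i}-p_i|<2(\ep_i'e^t)^{1/2}$ for $i\in I$;
\item $1\le q\le e^t$;
\item the linearised quantity $qg_j(\bx_0)-\sum\partial_ig_j(\bx_0)(qx_{0,i}-p_i)-p_j$ is $\ll_{F,n}\ep_j$ for $j\in J$.
\end{itemize}
By the block structure of \eqref{def_U_1}, the coordinates of $U_1(\bx_0)(-\bp,q)$ are exactly these quantities, suitably ordered. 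The block rows for $J(k)$ contain $g_k$ and its gradients in $\bar\bx_k$, and the rows for $I(k)$ contain $\bx_k$.

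The sum in Lemma~\ref{lem: gen x to x_0} runs over $I(k)$. The full linearisation in $U_1$ instead involves all of $\bar I_k$. I will take the lemma as giving the bound over $\bar I_k$, which is what its proof yields since $g_j$ depends on $\bar\bx_k$.

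Applying $g_{conv}=\phi\,\diag(\ep_n^{-1},\dots,\ep_1^{-1},e^{-t})$, suitably ordered as in \eqref{eq: new g}, the $J$-coordinates and the last coordinate become $\le c_1\phi$. The $I$-coordinates become $\le c_1\phi\,\ep_i^{-1}(\ep_i'e^t)^{1/2}$. The factor $\mathfrak{a}$ multiplies these by $\ep_i(\ep_i'e^t)^{-1/2}$, so
\[
\mathfrak{a}g_{conv}U_1(\bx_0)(-\bp,q)\in[c_2\phi]^{n+1}
\]
with $c_2=c_2(n,F)$. The inequality $\ep_i\le(\ep_i'e^t)^{1/2}$ from \eqref{eq:epep'new} is what makes this normalisation consistent. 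It keeps the entries of $\mathfrak{a}$ at most $1$, so the $J$ bounds are not inflated.

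\textbf{Step 2: counting.} Since $\bx_0\notin\mathfrak{M}$, we have $\lambda_{n+1}(\Lambda)\le\phi$ for the lattice $\Lambda=\mathfrak{a}g_{conv}U_1(\bx_0)\Z^{n+1}$. A standard argument then gives
\[
\#\bigl(\Lambda\cap[c_2\phi]^{n+1}\bigr)\ll_n \frac{\phi^{n+1}}{\det\Lambda}.
\]
For this, pick a basis of length $\le\phi$. Each lattice point in the box then lies in a fundamental domain contained in a box of comparable size.

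Since $\det g_{conv}=1$ by \eqref{eq: phi}, $\det U_1=1$, and $\det\mathfrak{a}=\prod_{i\in I}\ep_i(\ep_i'e^t)^{-1/2}$, the count is
\[
\ll \phi^{n+1}\prod_{i\in I}\frac{(\ep_i'e^t)^{1/2}}{\ep_i}.
\]

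\textbf{Step 3: simplification.} Substituting $\phi^{n+1}=e^t\prod_{i=1}^n\ep_i$ cancels the $\ep_i$ for $i\in I$. This leaves
\[
e^t\prod_{j\in J}\ep_j\prod_{i\in I}(\ep_i'e^t)^{1/2}
= e^{t(d+1)}\prod_{j\in J}\ep_j\left(\frac{\prod_{i\in I}\ep_i'}{e^{dt}}\right)^{1/2},
\]
which is the claimed bound.

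\textbf{Main obstacle.} The main check is Step~1: that the linearisation from Lemma~\ref{lem: gen x to x_0} is exactly what $U_1(\bx_0)$ computes in each $J(k)$ block. In particular, the second-order Taylor error must involve only $\max_{i\in\bar I_k}\ep_i'$, which is $\le\min_{j\in J(k)}\ep_j$ by the choice \eqref{eq: ep' new} together with \eqref{eq: chain}, so it is absorbed into $\ep_j$.
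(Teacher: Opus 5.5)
Your proposal is correct and follows the paper's route exactly: the paper proves this lemma by repeating the proof of Lemma~\ref{lem: counting locally on good} with $u_1$ and $a$ replaced by $U_1$ and $\mathfrak{a}$ (linearise via Lemma~\ref{lem: gen x to x_0}, place $\mathfrak{a}g_{conv}U_1(\bx_0)(-\bp,q)$ in $[c_2\phi]^{n+1}$, count lattice points using $\lambda_{n+1}\le\phi$ and the determinant, then substitute $\phi^{n+1}=e^t\prod_i\ep_i$). You are also right to read the sum in Lemma~\ref{lem: gen x to x_0} as running over $\bar I_k$ rather than $I(k)$: that is what the $J(k)$ rows of $U_1$ compute, and the Taylor error is then controlled by $\max_{i\in\bar I_k}\ep_i'\le\min_{j\in J(k)}\ep_j$.
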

Using the above with the covering argument, we get the following lemma generalising  Proposition \ref{prop: count on major}.
\begin{proposition}\label{prop: gen count on major}
Let $B$ be a ball in $V\subset \bU$. For sufficiently large  $t\in\N$ and every  $\bve=(\ep_i)_{i=1}^n$ satisfying \eqref{eq: chain},
    $$N(B\setminus \mathfrak{M},\bve, t)\leq e^{t(d+1)}\prod_{j\in J} \ep_{j}.$$
\end{proposition}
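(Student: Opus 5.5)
We follow the route that took Lemma~\ref{lem: counting locally on good} to Proposition~\ref{prop: count on major}: a local count on boxes $\Delta(\bx_0)$, followed by a covering argument. The target bound is obtained from Lemma~\ref{lem: gen counting locally on good} once we check that the covering factor exactly cancels the factor $\big(\prod_{i\in I}\ep_i'/e^{dt}\big)^{1/2}$.

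\emph{Covering.} First, take $t$ large enough that every box $\prod_{i\in I}B\big(x_{0,i},(\ep_i'/e^t)^{1/2}\big)$ has side lengths at most $e^{-t/2}$, and hence is much smaller than the radius of $B$. This is possible because $\ep_i'<1$. Next, choose a maximal collection of points $\bx_0\in B\setminus\mathfrak{M}$ whose half-size boxes are pairwise disjoint. The corresponding full boxes $\Delta(\bx_0)$ cover $B\setminus\mathfrak{M}$. Since the half-size boxes are disjoint and lie in a slight enlargement of $B$ (of comparable measure), the number of centres is
\[
\ll_d \Big(\prod_{i\in I}\big(\ep_i'/e^{t}\big)^{1/2}\Big)^{-1}\mu_d(B).
\]
Any rational point counted by $N(B\setminus\mathfrak{M},\bve,t)$ comes with some $\bx\in B\setminus\mathfrak{M}$ satisfying \eqref{eq5.14}. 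That $\bx$ lies in some $\Delta(\bx_0)\cap B$, so the point is also counted by $N(\Delta(\bx_0)\cap B,\bve,t)$. This gives the analogue of the covering lemma preceding Proposition~\ref{prop: count on major}:
\[
N(B\setminus\mathfrak{M},\bve,t)\ll \Big(\frac{\prod_{i\in I}\ep_i'}{e^{dt}}\Big)^{-1/2}\max_{\bx_0\in B\setminus\mathfrak M}N(\Delta(\bx_0)\cap B,\bve,t)\,\mu_d(B).
\]

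\emph{Combining.} Now apply Lemma~\ref{lem: gen counting locally on good} to each centre $\bx_0\in B\setminus\mathfrak M$. Its hypotheses hold because $\ep_i'$ is chosen by \eqref{eq: ep' new}, and this choice gives \eqref{eq:epep'new}, which in turn gives $\ep_i\le(\ep_i'e^t)^{1/2}$. The local bound contains the factor $\big(\prod_{i\in I}\ep_i'/e^{dt}\big)^{1/2}$, which cancels exactly against the covering factor. We obtain
\[
N(B\setminus\mathfrak M,\bve,t)\ll_{F,n} e^{t(d+1)}\prod_{j\in J}\ep_j\,\mu_d(B).
\]
The implied constant and the factor $\mu_d(B)$ are harmless: for fixed $B$ they are absorbed into the statement. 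If one insists on the stated inequality with no constant, the same argument gives it up to a constant depending only on $F,n,B$, and that is all that the Borel--Cantelli application uses.

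\emph{Main obstacle.} The only delicate point is uniformity: the box sizes depend on $\bve$, and the threshold on $t$ must not. This is why we use $\ep_i'\le1$, which makes every box side at most $e^{-t/2}$ whatever $\bve$ is. Then a single $t_0(B)$ suffices for all $\bve$ satisfying \eqref{eq: chain}. The boundary effect is also handled: replacing $B$ by its $e^{-t/2}$-neighbourhood changes $\mu_d(B)$ by at most a factor $2$ once $t\ge t_0(B)$.
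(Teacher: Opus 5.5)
Your proof is correct and follows the paper's route: you apply the local bound of Lemma~\ref{lem: gen counting locally on good} on each box $\Delta(\bx_0)$ with $\bx_0\in B\setminus\mathfrak{M}$, then use the same covering argument as before Proposition~\ref{prop: count on major}, so that the number of boxes cancels the factor $\big(\prod_{i\in I}\ep_i'/e^{dt}\big)^{1/2}$. You are also right that the argument actually gives $N(B\setminus\mathfrak{M},\bve,t)\ll_{F,n} e^{t(d+1)}\prod_{j\in J}\ep_j\,\mu_d(B)$, as in the special-case Proposition~\ref{prop: count on major}, rather than the constant-free inequality as literally written, and that this weaker form is all the Borel--Cantelli step requires.
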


Next, following the same calculation as in \S \ref{sec: minor is small}, we get 
 $$\mathfrak{M}\subset \mathfrak{S}_{F}(\delta,K\dots,K,\bT)$$ with the following choices of parameters:
 $$\begin{aligned}
&\delta=e^{-t},\\ 
& K=\max_{i\in I} K_i, K_i= (\ep'_i e^t)^{-1/2}, ~i\in I, \\ 
& T_i=K_i+(\min_{j\in J(k)}\ep_{j})^{-1}, T_{j}=\ep_{j}^{-1}, i\in I(k), j\in J(k), 1\le k\le \fs.\end{aligned}$$
Note that while computing $T_i, i\in I(k),$ only $(\min_{j\in J(k)}\ep_{j})^{-1}$ shows up instead of $(\min_{j\in J}\ep_{j})^{-1}$ (which is bigger) is due to the parametrization form of $F$ in \eqref{eq: Para_new}.
By the choice of $\ep_{i}'$ we get that 
$T_i=(\min_{j\in J(k)}\ep_{j})^{-1}\leq \ep_i^{-1}, i\in I(k), 1\le k\le \fs. $ 

Then, by Theorem~1.4 in \cite{BKM} (or alternatively Theorem~\ref{BKM_new2}), we get

\begin{proposition}\label{prop: gen Bound on minor}
For $\mu_d$-almost every $\bx_0\in \bU$, there exists a ball $B_0$ centered at $\bx_0$, $\alpha>0$ and a constant $t_0$ depending on $n,F, B_0$ such that  for $t\geq t_0$ and any $\bve=(\ep_i)_{i=1}^n$ satisfying \eqref{eq: chain},
    $$
    \mu_d(\mathfrak{M}\cap B_0) \ll_{F, B_0} \left(\frac{1}{e^{t/2}} e^{-t}\ep_{1}^{-1}\cdots\ep_{n}^{-1}\right)^{\alpha}.
    $$
\end{proposition}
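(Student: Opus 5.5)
The plan is to apply the quantitative nondivergence estimate, Theorem~\ref{BKM_new2}, to the inclusion $\mathfrak{M}\subset\mathfrak{S}_F(\delta,K,\dots,K,\bT)$ established just above, after moving to the permuted map $\tF(\bx)=F(\bx)\omega_2$, which has the form \eqref{eq: F}. First, for $\mu_d$-almost every $\bx_0\in\bU$ we may assume that $F$ is $l$-nondegenerate at $\bx_0$ and admits the parametrisation \eqref{eq: Para_new} on a neighbourhood $V$ of $\bx_0$. Permuting coordinates preserves nondegeneracy. The set $\mathfrak{S}_F(\cdots)$ is also invariant under the corresponding permutation of $(a_1,\dots,a_n)$ together with the parameters $T_k$. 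Hence Theorem~\ref{BKM_new2} applies to $\tF$ and gives a ball $B_0\subset V$ centred at $\bx_0$ and a constant $E$. We take $B=B_0$ with radius $r\le 1$ fixed, so $\min\{K,r^{-1}\}\le r^{-1}\ll_{B_0}1$. We set $\alpha=\frac1{d(2l-1)(n+1)}$.

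Next we check the hypotheses \eqref{eqn5.2}. We have $\delta=e^{-t}\le 1$ and $T_i\ge1$. For the condition $\delta^n<K\,T_1\cdots T_n/\max_iT_i$ we use $K=\max_i(\ep_i'e^t)^{-1/2}$, and note that the product of all but the largest $T_i$ is at least $1$. It therefore suffices that $e^{-nt}<K$. This holds once $t\ge t_0$, because $\ep_i'\le1$ gives $K\ge e^{-t/2}$.

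The main term of \eqref{eqn5.3_0} is $\delta\prod_{i\in I}\min\{K_i,T_i\}\prod_{j\in J}T_j\,\mu_d(B_0)$, which is at most $e^{-t}\prod_{i\in I}K_i\prod_{j\in J}\ep_j^{-1}$. Here $K_i=(\ep_i'e^t)^{-1/2}$, and by \eqref{eq: ep' new} and \eqref{eq: chain} we have $\ep_i'\ge\ep_i$. So the main term is bounded by $e^{-t(1+d/2)}\prod_{i\in I}\ep_i^{-1/2}\prod_{j\in J}\ep_j^{-1}$. This term is smaller than the target quantity raised to the power $\alpha$, provided that quantity is $\le 1$; otherwise the claimed bound is trivial after adjusting the implied constant, since $\mu_d(\mathfrak{M}\cap B_0)\le\mu_d(B_0)$. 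To see this, note that since $\ep_i<1$ the main term is at most $e^{-t/2}\cdot e^{-t}\prod_{k}\ep_k^{-1}$, and $x\le x^\alpha$ for $x\le1$.

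The error term is $E\bigl(\delta\min\{K,r^{-1}\}\,T_1\cdots T_n/\max_iT_i\bigr)^\alpha$. We bound $\min\{K,r^{-1}\}\le K\le\max_i(\ep_i e^t)^{-1/2}\le \ep_1^{-1/2}e^{-t/2}$, using the chain condition. Next, $T_i=K_i+(\min_{j\in J(k)}\ep_j)^{-1}\ll\ep_i^{-1}$, since $K_i\le(\min_{J(k)}\ep_j)^{-1}\le\ep_i^{-1}$ by the displayed observation. This gives $T_1\cdots T_n/\max_iT_i\ll\prod_{k}\ep_k^{-1}\cdot\ep_1$. The spare factor $\ep_1$ absorbs $\ep_1^{-1/2}$, and we obtain $\bigl(e^{-t/2}e^{-t}\ep_1^{-1}\cdots\ep_n^{-1}\bigr)^\alpha$ up to constants depending on $F$ and $B_0$. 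The main obstacle is the bookkeeping of the reordering $\omega_2$: the largest $T_i$ must be identified correctly, and the bound $T_i\ll\ep_i^{-1}$, which relies on \eqref{eq: Para_new}, must survive the permutation. I would handle this by working directly with the permutation-invariant form of the right-hand side of \eqref{eqn5.3_0}.
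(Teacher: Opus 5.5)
Your argument is correct and is essentially the paper's. The paper gets the proposition by applying BKM's Theorem~1.4 (or, as it notes, alternatively Theorem~\ref{BKM_new2}) to the inclusion $\mathfrak{M}\subset\mathfrak{S}_F(\delta,K,\dots,K,\bT)$ with the listed parameters. Your error-term computation, using $K\le e^{-t/2}\ep_1^{-1/2}$ and $T_1\cdots T_n/\max_i T_i\le\prod_{k\ne 1}T_k\ll\ep_1\prod_k\ep_k^{-1}$, is exactly that calculation. Your choice of Theorem~\ref{BKM_new2} only adds the permutation to $\tF$ and the extra main term, both of which you handle correctly. BKM's theorem would apply to $F$ directly and has no main term.
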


Let $S_F(t;\ve_1,\dots,\ve_n)$ be set of $\bx\in\bU$ such that
\begin{equation}\label{eq5.14+}
\begin{aligned}
    &\left\vert F_{i}(\bx)-\frac{p_{i}}{q}\right\vert <\frac{\ep_{i}}{e^t}\qquad (1\le i\le n),\\
    & 1\leq q\leq e^t.
    \end{aligned}
\end{equation}
holds for some $(\bp,q)\in\Z^{n+1}$. Then using Propositions~\ref{prop: gen count on major} and \ref{prop: gen Bound on minor} we get the following 

\begin{proposition}\label{prop:totalbound}
Let $F:\bU\to\R^n$ be nondegenerate. Then for $\mu_d$-almost every $\bx_0\in \bU$, there exists a ball $B_0$ centered at $\bx_0$, $\alpha>0$ and a constant $t_0$ depending on $n,F, B_0$ such that  for $t\geq t_0$ and any $\bve=(\ep_i)_{i=1}^n\in(0,1)^n$ 
    $$
    \mu_d(S_F(t;\ve_1,\dots,\ve_n)\cap B_0) ~~\ll_{F, B_0} ~~e^t\ve_1\cdots\ve_n+\left(\frac{1}{e^{t/2}} e^{-t}\ep_{1}^{-1}\cdots\ep_{n}^{-1}\right)^{\alpha}
    $$
\end{proposition}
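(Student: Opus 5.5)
Fix a point $\bx_0$ at which $F$ is nondegenerate and, as explained at the start of \S\ref{sec: general}, admits a local parametrisation of the form \eqref{eq: Para_new} on a neighbourhood $V$. Proposition~\ref{prop:totalbound} places no ordering on $\bve$, whereas Propositions~\ref{prop: gen count on major} and~\ref{prop: gen Bound on minor} assume the chain condition \eqref{eq: chain}. So the first step is a reduction to \eqref{eq: chain}. Given arbitrary $\bve\in(0,1)^n$, pick a permutation $\pi$ with $\ep_{\pi(1)}\le\dots\le\ep_{\pi(n)}$. The set $S_F(t;\ve_1,\dots,\ve_n)$ equals $S_{F\circ\pi}(t;\ep_{\pi(1)},\dots,\ep_{\pi(n)})$, where $F\circ\pi$ denotes $F$ with its coordinates permuted; this map is still nondegenerate. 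Both the main term $e^t\ve_1\cdots\ve_n$ and the error term are symmetric in $\bve$. There are finitely many permutations, so it suffices to prove the bound for each $F\circ\pi$ with a chain-ordered $\bve$. For each $\pi$, the reparametrisation of \S\ref{sec: general} applied to $F\circ\pi$ works on a full-measure set of $\bx_0$. Intersecting these finitely many full-measure sets and taking $B_0$ to be the smallest of the resulting balls gives a common $B_0$, $\alpha$ and $t_0$.

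Now assume \eqref{eq: chain} and the form \eqref{eq: Para_new}. Split
$$S_F(t;\bve)\cap B_0\subset(\mathfrak{M}\cap B_0)\cup\bigl(S_F(t;\bve)\cap B_0\setminus\mathfrak{M}\bigr).$$
Proposition~\ref{prop: gen Bound on minor} bounds the first piece directly by the second term of the claimed estimate. For the second piece, every $\bx$ in it lies in the box $\prod_{i\in I}B(p_i/q,\ep_i/e^t)$ for some $(\bp,q)$ counted by $N(B_0\setminus\mathfrak{M},\bve,t)$, since the coordinates $x_i$, $i\in I$, are among the $F_i$. Hence its measure is at most
$$N(B_0\setminus\mathfrak{M},\bve,t)\prod_{i\in I}\frac{2\ep_i}{e^t}\ll e^{t(d+1)}\prod_{j\in J}\ep_j\cdot e^{-dt}\prod_{i\in I}\ep_i= e^t\ep_1\cdots\ep_n,$$
by Proposition~\ref{prop: gen count on major}. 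This is the first term, and adding the two pieces gives the claim.

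The main obstacle is the reduction step: the statement is for arbitrary $\bve$, while the machinery uses both the chain ordering and the parametrisation \eqref{eq: Para_new}, with $I,J$ adapted to that ordering. The delicate point is checking that after permuting coordinates the nondegeneracy, and hence the full-measure set of good base points $\bx_0$ produced by \S\ref{sec: general}, survives for each of the finitely many permutations. A minor secondary point is that Proposition~\ref{prop: gen count on major} is stated for balls in $V$, which is handled by shrinking $B_0$ into $V$.
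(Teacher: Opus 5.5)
Your proposal is correct and follows essentially the same route as the paper. You first reduce to the chain condition \eqref{eq: chain} by permuting the coordinates of $F$ and $\bve$ simultaneously, which works because nondegeneracy, the set $S_F$ and the bound are all permutation-invariant. Then, in the parametrisation \eqref{eq: Para_new}, you bound the part in $\mathfrak{M}$ by Proposition~\ref{prop: gen Bound on minor}, and the rest by Proposition~\ref{prop: gen count on major} times the box volume $\prod_{i\in I}2\ep_i e^{-t}$. You spell out steps the paper leaves terse, namely the $\mathfrak{M}$/complement split, the box-volume computation and the intersection over the finitely many permutations. One step stays implicit in your write-up: as the paper remarks, passing to \eqref{eq: Para_new} is a local diffeomorphic change of variables that distorts Lebesgue measure only by a bounded factor.
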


\begin{proof}
To begin with assume  the chain assumption \eqref{eq: chain}. Write $F$ in the form \eqref{eq: Para_new}, which we can do locally in a neighborhood of almost every point. We note that this involves making a diffeomorphic  change of variables, which preserves Lebesgue measure up to a positive bounded factor. Working with the new parameterisation, we observe that $S_F(t;\ve_1,\dots,\ve_n)$ becomes the set of $\bx\in\bU$ satisfying \eqref{eq5.14} for some $(\bp,q)\in\Z^{n+1}$. Then we fall into the framework of Propositions~\ref{prop: gen count on major} and \ref{prop: gen Bound on minor}. Now the first term in the estimate follows from Propositions~\ref{prop: gen count on major} and the second term of the estimate follows from Proposition~\ref{prop: gen Bound on minor}.

Now suppose that  \eqref{eq: chain} does not hold. Then there is a permutation $\omega=(k_1,\cdots, k_n)$ of $(1,\dots,n)$ such that $\omega(\ve_1,\dots,\ve_n)=(\ep_{k_1}, \ldots,\ep_{k_n})$ satisfies \eqref{eq: chain}, that is $\ep_{k_1}\leq \dots\leq\ep_{k_n}$. Clearly, $\omega F$ is nondegenerate since $F$ is nondegenerate. Hence the above argument is applicable to $\omega F$ and $\omega(\ve_1,\dots,\ve_n)$ to give the same estimate, since the set $S_F(t;\ve_1,\dots,\ve_n)$ is invariant under applying any permutation to $F$ and $(\ve_1,\dots,\ve_n)$ simultaneously. This completes the proof.
\end{proof}

The proof of convergence in Theorem~\ref{thm: dream} is now completed as in the special case on using exactly the same argument.

\section{Multiplicative Convergence}\label{sec: multi}

To begin with, we note that, by Proposition~\ref{prop2.1} (with $n=1$), we can assume without loss of generality, that for any $\mathfrak{c}>0$ 
\begin{equation}\label{eq: lower bound on psi}  \psi(q)\ge q^{-1-\mathfrak{c}}\qquad\text{for all $q\in\N$}.\end{equation} 
We fix any $0<\mathfrak{c}<1/2$. Note that the convergence sum condition of Theorem~\ref{thm:mult} is equivalent to
\begin{equation}\label{eq6.2x}
\sum_{t\geq 1} e^t\psi(e^t) t^{n-1}<\infty\,.
\end{equation}
Then, by the monotonicity of $\psi$, we have that
\begin{equation}\label{eq5.2}
\psi(e^t)<e^{-t}
\end{equation}
for all sufficiently large $t$. Without loss of generality, we will assume 
\eqref{eq5.2} for all $t\ge1$. 

Let $F:\bU\to\R^n$ be nondegenerate. For each $t\in\N$ and $w_0>0$ let us define the set
$\tilde S_F(t,w_0)$
to consist of all $\bx\in\bU$ such that 
$$
\min_{1\le i\le n}|qy_i-p_i|< e^{-tw_0}\qquad\text{for  some integer $1\le q\le e^t$ and $\vv p\in\Z^n$}\,.
$$
Observe that 
$$
\limsup_{t\to\infty}\tilde S_F(t;w_0)\subset F^{-1}\cS_n^\times(q^{-w_0}).
$$
It is easily verified that there exists $w_0>1$ such that 
$\mu_d(F^{-1}\cS_n^\times(q^{-w_0}))=0$. 
Indeed, by the theorem of Kleinbock and Margulis \cite{KM} we can take any $w_0>1$, although the existence of a $w_0>1$ can be established by a relatively straightforward use of the Borel-Cantelli lemma. Fix such a $w_0$. Therefore we have that
\begin{equation}\label{eq6.3}
\mu_d(\limsup_{t\to\infty}\tilde S_F(t;w_0))=0.
\end{equation}

Given a vector $\vv k=(k_1,\dots,k_{n-1})\in \Z^{n-1}$ and $t\ge1$, we define
$$
\ve_i(t,\vv k):=\left\{\begin{array}{cl}
    e^{-k_i} & \text{if $1\le i\le n-1$}\,, \\
    \psi(e^t)\prod_{i=1}^{n-1} e^{k_i+1} & \text{if $i=n$}\,. 
\end{array}\right.
$$
Thus,
\begin{equation}\label{eq: prod of vei}
\prod_{i=1}^n\ve_i(t,\vv k)=e^{n-1}\psi(e^t)\asymp \psi(e^t)
\end{equation}
regardless of $\vv k$.
For each $t\ge1$ let 
$$
\mathcal{K}_t:=\{\vv k=(k_1,\dots,k_{n-1}\in \Z^{n-1}~\vert~ 0\le k_i\le w_0t\text{ for each $i$} \}.
$$
Naturally, $\#\mathcal{K}_t\asymp t^{n-1}$.

Then, it is readily seen that
$$
F^{-1}\cSM_n(\psi)\subset \limsup_{t\to\infty}\tilde S_F(t;w_0) \cup \limsup_{t\to\infty}\bigcup_{\vv k\in\mathcal{K}_t}S_F(t;\ve_1(t,\vv k),\dots,\ve_n(t,\vv k)),
$$
where $S_F(t;\ve_1(t,\vv k),\dots,\ve_n(t,\vv k))$ is defined by \eqref{eq5.14+}. By \eqref{eq6.3}, it suffices to prove that the second limsup set is null. Clearly, it is enough to prove this statement on a sufficiently small neighborhood of almost every point $\bx_0$ in $\bU$. Let $\bx_0$, $B_0$ and $\alpha>0$ be as in Proposition~\ref{prop:totalbound}. Then, using \eqref{eq: prod of vei} and \eqref{eq: lower bound on psi}, for any $\vv k\in\mathcal{K}_t$, we get that
$$
\mu_d(S_F(t;\ve_1(t,\vv k),\dots,\ve_n(t,\vv k))\cap B_0) ~~\ll_{F, B_0} ~~e^t\psi(e^t)+e^{-t\alpha(1/2-\mathfrak{c})}.
$$
Since $\mathfrak{c}<1/2$, by \eqref{eq6.2x} and $\#\mathcal{K}_t\asymp t^{n-1}$, we get that  
$$
\sum_t \mu_d\left(\bigcup_{\vv k\in\mathcal{K}_t}S_F(t;\ve_1(t,\vv k),\dots,\ve_n(t,\vv k))\cap B_0)\right)\ll \sum_{t\ge1} e^t\psi(e^t)t^{n-1}+\sum_{t\ge1}e^{-t\alpha(1/2-\mathfrak{c})}t^{n-1}<\infty
$$
and the Borel-Cantelli Lemma completes the proof.

\bigskip

\noindent\emph{Acknowledgments.} VB and SD were supported by an EPSRC grant EP/Y016769/1. LY is supported in part by a start-up grant from National University of Singapore. SD thanks her colleagues at the Department of York, where majority of this work was done. SD thanks Han and Sam for informing her about their ongoing work \cite{MultCSTY} with Niclas and Rajula during her visit to the University of Warwick, and also for their warm hospitality. We thank all authors of \cite{MultCSTY} for motivating us to finish our paper. Last but not the least, SD thanks Subhajit for tolerating $p/q$'s everywhere and every single day at York.

\bibliographystyle{abbrv}
\bibliography{inhomosim.bib}

\end{document}